\def\0{{\bf 0}}
\def\R{{\mathbb R}}
\def\Z{{\mathbb Z}}
\def\N{{\mathbb N}}
\theoremstyle{plain}
\newtheorem{theorem}{Theorem}[section]
\newtheorem{cor}[theorem]{Corollary}
\newtheorem{prop}[theorem]{Proposition}
\newtheorem{lemma}[theorem]{Lemma}
\theoremstyle{definition}
\newtheorem{definition}[theorem]{Definition}
\newtheorem*{theorem*}{Theorem}
\newtheorem{remark}[theorem]{Remark}
\newtheorem*{remark*}{Remark}
\newtcolorbox{TitledBox}[2][]{
    myimage,              
    coltitle=black,       
    colbacktitle=white,   
    title=My title,
    attach boxed title to top center={
        yshift=-3mm,
        yshifttext=-1mm},
    attach boxed title to top left={
        xshift=1cm,
        yshift=-2mm},
    boxed title style={
        size=small},
    title={#2},#1}
\newcommand{\abs}[1]{\left\vert#1\right\vert}
\DeclareMathOperator{\dv}{div}
\DeclareMathOperator{\supp}{supp}
\newcommand{\iny}{\ensuremath{\infty}}
\newcommand{\grad}{\ensuremath{\nabla}}
\newcommand{\prt}{\ensuremath{\partial}}
\newcommand{\pdx}[2]{\frac{\prt #1}{\prt #2}}
\newcommand{\brac}[1]{\ensuremath{\left[ #1 \right]}}
\newcommand{\pr}[1]{\ensuremath{\left( #1 \right) }}
\newcommand{\norm}[1]{\ensuremath{\left\Vert #1 \right\Vert}}
\newcommand{\Cal}[1]{\ensuremath{\mathcal{#1}}}
\newcommand{\wh}{\widehat}
\newcommand{\diff}[2]{\frac{ d#1}{d#2}}
\newcommand{\al}{\alpha}
\newcommand{\CharFunc}{
    \ifbool{HaveBBM}{
        \ensuremath{\mathbbm{1}}
        }
        {
        \ensuremath{\bm{1}}
        }
    }
\DeclareMathOperator{\PV}{p.v.} %
\crefname{cor}{Corollary}{Corollaries} 
\crefname{lemma}{Lemma}{Lemmas}	       
\crefname{section}{Section}{Sections}
\Crefname{section}{Section}{Sections}
\crefname{appendix}{Appendix}{Appendices}
\Crefname{appendix}{Appendix}{Appendices}
\crefname{theorem}{Theorem}{Theorems}
\Crefname{theorem}{Theorem}{Theorems}
\crefname{prop}{Proposition}{Propositions}
\Crefname{prop}{Proposition}{Propositions}
\crefname{conj}{Conjecture}{Conjectures}
\Crefname{conj}{Conjecture}{Conjectures}
\crefname{definition}{Definition}{Definitions}
\Crefname{definition}{Definition}{Definitions}
\crefname{remark}{Remark}{Remarks}
\Crefname{remark}{Remark}{Remarks}
\crefname{assumption}{Assumption}{Assumptions}
\Crefname{assumption}{Assumption}{Assumptions}
\newcommand{\NoteToSelf}[1]{
    }
\newcommand{\Holder}
    {H\"{o}lder }
\newcommand{\stardot}{\mathop{* \cdot}}
\renewcommand{\epsilon}{\varepsilon}
\newcommand{\eps}{\ensuremath{\varepsilon}}
\begin{document}

\raggedbottom

\numberwithin{equation}{section}

\title
    [Non-decaying solutions to dissipative SQG]
    {Non-decaying solutions to the 2D dissipative quasi-geostrophic equations}

\author{David M. Ambrose}
\address{Department of Mathematics, Drexel University}
\curraddr{}
\email{dma68@drexel.edu}

\author{Ryan Aschoff}
\address{Department of Mathematics, University of California, Riverside}
\curraddr{}
\email{ryan.aschoff@email.ucr.edu}

\author{Elaine Cozzi}
\address{Department of Mathematics, Oregon State University}
\curraddr{}
\email{cozzie@math.oregonstate.edu}

\author{James P. Kelliher}
\address{Department of Mathematics, University of California, Riverside}
\curraddr{}
\email{kelliher@math.ucr.edu}

\subjclass{Primary 76D03, 35Q86, 35A01, 35Q35} 
\date{August 13, 2025}


\keywords{Fluid mechanics, surface quasi-geostrophic equations, singular initial 
data, mild solutions, global solutions, non-decaying}

\begin{abstract}
We consider the surface quasi-geostrophic equation in two spatial dimensions, with
subcritical diffusion (i.e. with fractional diffusion of order $2\alpha$ for $\alpha>\frac{1}{2}$.)  We establish existence of solutions without
assuming either decay at spatial infinity or spatial periodicity.  
One obstacle is that for $L^{\infty}$ data, the constitutive law may not
be applicable, as Riesz transforms are unbounded.  However, for
$L^{\infty}$ initial data for which the constitutive law does converge, 
we demonstrate that there exists a unique solution
locally in time, and that the constitutive law continues to hold at positive
times. 
In the case that $\alpha\in(\frac{1}{2},1]$ and that the initial data has some 
smoothness (specifically, if the data is in $C^{2}$), 
we demonstrate a maximum principle and show that this
unique solution is actually classical and global in time.  
Then, a density argument allows us to show that mild solutions with only 
$L^{\infty}$ data are also global in time, and also possess this maximum 
principle.  Finally, we introduce a related problem 
in which we replace the usual constitutive law for the surface quasi-geostrophic
equation with a generalization of Sertfati type, and prove the same results for
this relaxed model.
\end{abstract}

\maketitle

\vspace{-2.5em}

\begin{center}

\end{center}

\tableofcontents

\section{Introduction}
The two-dimensional dissipative surface quasi-geostrophic equations (SQG) can be written, for $\nu > 0$, $\alpha>0$, and $\Lambda = (-\Delta)^{1/2}$, in strong form as,  
\begin{equation} \tag{$SQG$} \label{e:SQG}
	\begin{cases}
		\partial_t \theta + u \cdot \nabla \theta + \nu\Lambda^{2\alpha} \theta = 0
			& \text{in } [0, T] \times \mathbb{R}^2, \\
		u = -\nabla^\perp \Lambda^{-1} \theta
			& \text{in } [0, T] \times \mathbb{R}^2, \\
		\theta|_{t = 0} =  \theta_0
			& \text{in } \mathbb{R}^2.
	\end{cases}
\end{equation}
Without dissipation, this system was introduced by Constantin,
Majda, and Tabak to model atmospheric fluid flows and
as a two-dimensional analogy for the three-dimensional
Euler equations \cite{constantinMajdaTabak}.
 In the non-dissipative case, the existence of a smooth global solution (or singularity formation) remains an open question in general. We consider the question of local and global existence of solutions to the dissipative system \cref{e:SQG}, in the case that the data is non-decaying.
(We mention that authors differ on the choice of sign in the constitutive law, using $\pm \nabla^\perp \Lambda \theta$, but our choice agrees with that of  
\cite{constantinMajdaTabak}.)

The dissipative SQG system can be subcritical, critical,
or supercritical depending on the value of $\alpha.$ 
We consider the subcritical case, in which 
$\alpha>\frac{1}{2}.$  Other results for the subcritical case are 
\cite{constantinWu}, \cite{mayZahrouni},
\cite{Resnick1995}, \cite{Wu},  in which various local and global existence theorems are proved on the torus
or for decaying solutions in $\mathbb{R}^{2}.$  
Without attempting to provide an exhaustive list of
references, we mention that local and global existence results have also been proved in the critical 
($\alpha=\frac{1}{2}$) case
\cite{abidiHmidi},
\cite{kiselevEtAl}, 
and in the supercritical case ($0<\alpha<\frac{1}{2}$)
\cite{chaeLee}, \cite{chenMiaoZhang}, \cite{dongLi}.
None of these works focused on the question
considered here, which is existence theory in non-decaying function spaces such as 
$L^{\infty}.$

We can write the constitutive law, \cref{e:SQG}$_2$, as (see \cref{S:ConstLaw} for more details)
\begin{align}\label{e:ConstLawPV}
    u(t, x)
        &=  \PV K\ast \theta := \lim_{\substack{\epsilon\to 0 \\ R\to \infty}} \int_{\epsilon<|x-y|<R} K(x - y) \theta(t,y) \,
            dy,
\end{align}
where
\begin{align}\label{e:K}
    K(x)
        &:= - \frac{1}{2 \pi} \frac{x^\perp}{\abs{x}^3}
        = \grad^\perp \psi(x), \quad
        \psi(x) := -\frac{1}{2 \pi |x|}.
\end{align}

(For ease of notation, we will often abbreviate the principle value integral in \eqref{e:ConstLawPV}, writing $K\ast\theta$, only writing out the principal value integral when it is necessary to consider it carefully.)  We will be studying 
solutions $\theta\in L^{\infty}(\mathbb{R}^{2}),$ whereas SQG is more commonly
studied in $L^{2}(\mathbb{R}^{2})$ or similar function spaces.  A space
such as $L^{2}(\mathbb{R}^{2})$ has the advantage that $u$ is then clearly
defined; that is, the Riesz transforms in \cref{e:ConstLawPV} are well-defined
for $\theta\in L^{2}(\mathbb{R}^{2}),$ and return $u\in (L^{2}(\mathbb{R}^{2}))^{2}.$  A fundamental difficulty to overcome in our setting is
that Riesz transforms are unbounded on $L^{\infty},$ as is well-known.

The closest works in the literature to the present are the papers \cite{lazar13}, \cite{lazar15}.
In these works, Lazar studied dissipative SQG in the critical
case $\alpha=1/2,$ proving existence of local and global weak solutions.  The Lazar solutions
start from data in the space 
$L^{\infty}(\mathbb{R}^{2})\cap\Lambda^{s}(\dot{H}^{s}_{ul}(\mathbb{R}^{2})),$
i.e., the data is in $L^{\infty}$ but is also the $s^{\mathrm{th}}$ derivative of a function with
$s$ derivatives in the uniformly local $L^{2}$ space.  
This additional assumption on the data is made to 
induce oscillations, which allow the Riesz transforms to
converge.

We take two alternative approaches to make sense of the constitutive law
for non-decaying solutions.  First, while convolution with $K$ does not 
converge for many elements of $L^{\infty},$ we proceed for those elements
of $L^{\infty}$ for which the convolution does make sense.
That is, in our first approach, we take initial data
$\theta_{0}\in L^{\infty}(\mathbb{R}^{2})$ for which convolution with
$K$ yields a result that is also in $L^\iny(\R^2)$.  (We in fact need slightly 
more than this, in that we also ask that the convolution converge uniformly,
in a sense to be made precise in \cref{preliminary} below.)  We give 
several examples of such $\theta_{0}$ in \cref{preliminary}.
Our second approach will be to introduce a relaxation of the constitutive law.

We define a new notion of mild solution for \cref{e:SQG} which 
allows us to solve for $\theta$ and $u$ simultaneously, and in a sense, 
no longer requires us to reconstruct $u$ from $\theta$ at every instant.
Our starting point in making this mild formulation
is the work of Marchand \cite{Marchand} and Marchand and Lemari\'{e}-Rieusset \cite{Marchand2005}, who
write a mild formulation of \cref{e:SQG} with a 
single integral equation,
\begin{align*}
    \theta(t,x)
        = (G_\alpha(t)\,\theta_0)(x) \;-\;\int_0^t \nabla G_\alpha(t-s)\,\cdot\bigl(\theta\,(K*\theta)\bigr)(s,x)\,ds,
\end{align*}
where $G_\alpha(t)$ is the fractional heat semigroup defined in (\ref{def:heatsemigroup}). We replace $K*\theta$ with $u$ and couple this equation to a second integral relation for $u,$ as in the next definition.

\begin{definition}\label{D:mildsolution}
Let $T\in(0,\infty)$ and let $(\theta_0, u_0) \in (L^\iny(\R^2))^{3}$.
A pair
\[
(\theta,u)\;\in\;L^\infty\bigl([0,T]\times\R^2\bigr)\;\times\;\bigl(L^\infty([0,T]\times\R^2)\bigr)^2
\]
is called a \emph{mild solution} to \cref{e:SQG} on $[0,T]$ if for each $t\in(0,T]$ one has
\begin{equation}\label{SQGintegral}
\begin{split}
    \theta(t,x)
        &= (G_\alpha(t)\,\theta_0)(x)
          \;-\;\int_0^t \nabla G_\alpha(t-s)\,\cdot\bigl(\theta u\bigr)(s,x)\,ds,\\
    u(t,x)
        &= (G_\alpha(t)\,u_0)(x)
          \;-\;\int_0^t (K*\nabla G_\alpha(t-s))\,\cdot\bigl(\theta u\bigr)(s,x)\,ds.
\end{split}
\end{equation}
\end{definition}

Through \cref{D:mildsolution} we have circumvented the question of whether we can convolve the kernel $K$ with an $L^{\infty}$ function.  
Given bounded $\theta_{0}$ and $u_{0},$
we will show that there exists $T>0$ and $\theta$ and
$u$ such that \cref{SQGintegral} holds.
This definition does not enforce any relationship between $\theta_{0}$ and 
$u_{0}$, allowing us to utilize both of our approaches to this 
question without changing the definition of a mild solution.
As we have said, our first approach is to consider $\theta_{0}$ for which 
there is a $u_{0}$ such that $u_{0}=K*\theta_{0},$ with an additional assumption
of uniform convergence.  For the resulting solutions $(\theta,u),$ we can
then show that $u=K*\theta$ at positive times, as one would desire.  The 
following is this local existence theorem.
\begin{theorem} \label{thm:existence-of-solutions}
Let $\theta_0\in L^\infty (\R^2)$, $u_0\in (L^\infty(\R^2))^2$, and fix $\alpha>\frac{1}{2}$. For some $T>0$ there exists a unique mild solution $(\theta, u) \in (C((0, T]; L^{\infty}(\R^2))^3$ of \cref{e:SQG} with $\theta|_{t=0} = \theta_0$ and $u|_{t=0} = u_0$. 
Moreover, if $u_0 = \PV K \ast \theta_0$ with the principal value integral converging uniformly in the sense of \cref{e:InitCondUnif}, the solution $(\theta,u)$ satisfies \eqref{e:SQG}$_2$. 
\end{theorem}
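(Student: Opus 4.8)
The plan is to solve the coupled system \eqref{SQGintegral} by a contraction mapping argument in a space of bounded functions on $(0,T]$ valued in $L^\infty(\R^2)$, and then to recover the constitutive law at positive times by feeding the $\theta$-equation through the kernel $K$. For the first part I would work in
$X_T := \{(\theta,u)\in (C((0,T];L^\infty(\R^2)))^3 : \norm{(\theta,u)}_{X_T}<\infty\}$ with $\norm{(\theta,u)}_{X_T} := \sup_{0<t\le T}(\norm{\theta(t)}_{L^\infty}+\norm{u(t)}_{L^\infty})$, and define $\Phi=(\Phi_1,\Phi_2)$ by the right-hand sides of \eqref{SQGintegral}. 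The estimates that drive everything are the uniform $L^\infty$-boundedness $\norm{G_\alpha(t)f}_{L^\infty}\le C\norm{f}_{L^\infty}$ together with the smoothing bounds
\[
\norm{\nabla G_\alpha(t) f}_{L^\infty} \le C\, t^{-\frac{1}{2\alpha}} \norm{f}_{L^\infty},
\qquad
\norm{(K*\nabla G_\alpha(t))\cdot g}_{L^\infty} \le C\, t^{-\frac{1}{2\alpha}} \norm{g}_{L^\infty}.
\]
The first follows from the scaling $\norm{\nabla p_\alpha(t,\cdot)}_{L^1}\sim t^{-\frac1{2\alpha}}$; the second is the crucial point that, although the Riesz-type operator $K$ of \eqref{e:K} is unbounded on $L^\infty$, the composite convolution kernel $K*\nabla G_\alpha(t)$ is genuinely integrable with the same scaling, by the mean-zero cancellation of $\nabla p_\alpha$ against the homogeneous kernel $K$. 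I would record both as a preliminary lemma, since they are what turn the $L^\infty$ obstruction into a harmless one once the heat kernel is present.

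Granting these, the Duhamel term in each equation is controlled by
\[
\int_0^t C\,(t-s)^{-\frac1{2\alpha}} \norm{\theta(s)}_{L^\infty}\norm{u(s)}_{L^\infty}\,ds
\le C\,\frac{T^{\,1-\frac1{2\alpha}}}{1-\frac1{2\alpha}}\,\norm{(\theta,u)}_{X_T}^2,
\]
where the integral converges precisely because $\alpha>\tfrac12$ forces $\tfrac1{2\alpha}<1$. Hence $\Phi$ maps the ball of radius $R:=2(\norm{\theta_0}_{L^\infty}+\norm{u_0}_{L^\infty})$ into itself once $T$ is small, and the bilinear splitting $\theta_1 u_1-\theta_2 u_2=\theta_1(u_1-u_2)+(\theta_1-\theta_2)u_2$ gives the Lipschitz bound $\norm{\Phi(\theta_1,u_1)-\Phi(\theta_2,u_2)}_{X_T}\le C\,T^{1-\frac1{2\alpha}} R\,\norm{(\theta_1,u_1)-(\theta_2,u_2)}_{X_T}$. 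Shrinking $T$ further makes the constant less than $1$, so the contraction mapping principle yields a unique mild solution; membership in $C((0,T];L^\infty)$ follows from continuity in $t$ of the kernels for $t$ bounded away from $0$ and the vanishing of the Duhamel terms as $t\to 0^+$.

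For the ``moreover'' part I would apply the convolution $K*$ to the first line of \eqref{SQGintegral} and check that $K*\theta$ solves the second line. Two identities must be justified: first, the commutation $\PV K * (G_\alpha(t)\theta_0) = G_\alpha(t)\,(\PV K * \theta_0) = G_\alpha(t)u_0$, using that $K$ and $G_\alpha(t)$ are commuting convolution operators and that $u_0=\PV K*\theta_0$; second, that $K$ passes through the time integral and associates with $\nabla G_\alpha$, i.e. $K*\bigl(\nabla G_\alpha(t-s)\cdot(\theta u)(s)\bigr)=(K*\nabla G_\alpha(t-s))\cdot(\theta u)(s)$. The latter is legitimate because $K*\nabla G_\alpha(t-s)$ is an honest $L^1$ kernel, so Fubini and the absolute convergence furnished by the preliminary lemma apply. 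Combining the two identities shows that $K*\theta$ satisfies exactly the integral equation defining $u$ in \eqref{SQGintegral}, whence $u=K*\theta=\PV K*\theta$ at positive times, which is \eqref{e:SQG}$_2$.

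The step I expect to be the main obstacle is the first commutation identity: because $K$ acts on the non-decaying function $G_\alpha(t)\theta_0$ only as a conditionally (principal-value) convergent integral, one cannot simply invoke Fubini. Here is where the uniform-convergence hypothesis \eqref{e:InitCondUnif} is essential. I would carry out the commutation on the truncated integrals $\int_{\epsilon<|x-y|<R}$, where all interchanges are legitimate, and then use the assumed uniform convergence of $\PV K*\theta_0$, which is inherited by $G_\alpha(t)\theta_0$ since $G_\alpha(t)$ is convolution with an $L^1$ kernel and hence preserves uniform limits, to pass to the limit $\epsilon\to0$, $R\to\infty$ and identify $\PV K*(G_\alpha(t)\theta_0)$ with $G_\alpha(t)u_0$. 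Controlling these truncation errors uniformly in $x$ is the delicate calculation; everything else is the routine fixed-point machinery above.
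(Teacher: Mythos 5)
Your proposal is correct and follows essentially the same route as the paper: the same two kernel bounds ($\norm{\nabla g_\alpha(t)}_{L^1},\ \norm{K*\nabla g_\alpha(t)}_{L^1}\lesssim t^{-1/(2\alpha)}$, the latter by the mean-zero cancellation argument, i.e.\ the paper's \cref{lem:operator-L1-estimates}) drive a Picard fixed-point construction, and the constitutive law is recovered exactly as in \cref{prop:constitutive-law-holds}, by commuting $\PV K$ with the heat semigroup under the uniform-over-annuli hypothesis (\cref{L:KUniformConv}) and associating $K$ through the Duhamel term via truncated annuli and dominated convergence (\cref{L:KgradgalUnifBound}). The only cosmetic difference is that you run a single simultaneous contraction on the pair $(\theta,u)$, whereas the paper alternates (freezing $u^n$, obtaining $\theta^{n+1}$ as a fixed point of $T_{u^n}$, then updating $u^{n+1}$ explicitly) and afterwards shows the two sequences are Cauchy; the underlying estimates are identical.
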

\begin{proof}
    See \cref{existence}.
\end{proof}

We also study solutions with higher regularity.  For notational clarity, we introduce three classical spaces. Let $k\in \N$ and denote by $C^k$ the space of $k$ times differentiable functions. 
Let $C^k_b(\R^2)$ denote the Banach space of $k$-times continuously differentiable 
bounded functions with norm
$$\|f\|_{C^k_b} := \sum_{\substack{\beta\in \N^2, |\beta|\leq k}} \|D^\beta f\|_{L^\infty} <\infty.$$ 
For $k=0$, $C^0_b(\R^2)$ denotes the space of bounded continuous functions. For $0<\gamma<1$, we denote the $\gamma$-H\"older continuous functions $C^\gamma(\R^2)$ as the subspace of $L^\infty(\R^2)$ bounded by the norm 
$$\|f\|_{C^\gamma} := \|f\|_{L^\infty} + \sup_{x\neq y} \frac{|f(x)-f(y)|}{|x-y|^\gamma}.$$ 
 
We are able to show that if the initial data is
in the $C^k_b$ spaces for $k\geq 1$, then the mild solutions preserve this regularity.
 \begin{theorem} \label{thm:regularity-of-theta-and-u}
    Let
    $\alpha>\frac{1}{2}$
    and $k\geq 1$. Select $\theta_0 \in C_b^k(\R^2)$, $u_0 \in   C_b^k(\R^2))^2$ satisfying $u_0=K\ast \theta_0$. Let $(\theta, u)$ be the mild solution given by \cref{thm:existence-of-solutions} which exists up to time $T$. For all $t < T$, we have $D^\beta \theta(t) \in L^\infty(\R^2)$ and $D^\beta u(t) \in (L^\infty(\R^2))^2$ for any multi-index $\beta \in \N^2$ such that $|\beta|\leq k$.
\end{theorem}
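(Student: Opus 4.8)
The plan is to argue by induction on the order $|\beta|$, with the base case $|\beta|=0$ supplied by \cref{thm:existence-of-solutions}, differentiating the integral system \eqref{SQGintegral} and exploiting the fact that every spatial derivative can be moved off the (singular) kernels and onto the nonlinearity $\theta u$. Writing $\tau=t-s$, the two smoothing operators in \eqref{SQGintegral}---convolution against $\nabla G_\alpha(\tau)$ and against $K*\nabla G_\alpha(\tau)$---are given by $L^1$ kernels obeying the self-similar bounds
\[
\|\nabla G_\alpha(\tau)\|_{L^1(\R^2)}+\|K*\nabla G_\alpha(\tau)\|_{L^1(\R^2)}\lesssim \tau^{-1/(2\alpha)},
\]
which are precisely the estimates underlying the fixed-point argument of \cref{existence}; the exponent $1/(2\alpha)$ is strictly less than $1$ exactly because $\alpha>\frac12$, so these bounds are integrable in time. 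Since $D^\beta$ commutes with convolution and with $G_\alpha(t)$, and $G_\alpha(t)$ is a contraction on $L^\infty(\R^2)$, applying $D^\beta$ to \eqref{SQGintegral} yields the closed system
\[
\begin{aligned}
D^\beta\theta(t,x) &= (G_\alpha(t)\,D^\beta\theta_0)(x)-\int_0^t \nabla G_\alpha(t-s)\cdot D^\beta(\theta u)(s,x)\,ds,\\
D^\beta u(t,x) &= (G_\alpha(t)\,D^\beta u_0)(x)-\int_0^t (K*\nabla G_\alpha(t-s))\cdot D^\beta(\theta u)(s,x)\,ds.
\end{aligned}
\]
Because $\theta_0,u_0\in C^k_b(\R^2)$, the linear terms $G_\alpha(t)D^\beta\theta_0$ and $G_\alpha(t)D^\beta u_0$ are bounded uniformly on $[0,T]$ for every $|\beta|\le k$, with no degeneration as $t\to 0$.

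The key observation is what happens when $D^\beta(\theta u)$ is expanded by the Leibniz rule. The two top-order contributions $u\,D^\beta\theta$ and $\theta\,D^\beta u$ involve the quantities being estimated, but only \emph{linearly} and multiplied by the bounded factors $\|u\|_{L^\infty}$ and $\|\theta\|_{L^\infty}$; every other term is a product of derivatives of order at most $|\beta|-1$, hence bounded on each $[0,T']$ with $T'<T$ by the inductive hypothesis. Setting $M_\beta(t):=\|D^\beta\theta(t)\|_{L^\infty}+\|D^\beta u(t)\|_{L^\infty}$ and applying Young's convolution inequality together with the kernel bounds above produces a weakly singular (Abel-type) integral inequality
\[
M_\beta(t)\le A+B\int_0^t (t-s)^{-1/(2\alpha)}M_\beta(s)\,ds,
\]
where $A$ collects the bounded data and lower-order terms and $B$ depends only on $\sup_{[0,T']}(\|\theta\|_{L^\infty}+\|u\|_{L^\infty})$, which is finite on $[0,T']$. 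A singular Gronwall inequality then gives $M_\beta(t)\le C(T')<\infty$ on $[0,T']$, and letting $T'\uparrow T$ yields the claim for all $t<T$.

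To make this rigorous rather than formal, I would not differentiate the already-constructed $L^\infty$ solution directly; instead I would run a contraction mapping for the right-hand side of \eqref{SQGintegral} in $C([0,T_1];C^k_b(\R^2))^3$ on a short interval, using that $C^k_b(\R^2)$ is a Banach algebra under pointwise multiplication and the same $L^1$ kernel bounds. This simultaneously constructs a $C^k_b$-valued solution and all its derivatives; by the uniqueness in \cref{thm:existence-of-solutions} it coincides with $(\theta,u)$ on $[0,T_1]$, and the a priori estimate for $M_\beta$---whose constant depends only on the $L^\infty$ norm of the solution---then allows the $C^k_b$ regularity to be continued up to $T$. I expect the continuation step to be the main obstacle: one must verify that the derivative norms cannot blow up before the $L^\infty$ existence time $T$, which rests on the singular Gronwall estimate closing with a constant governed solely by $\|\theta\|_{L^\infty}$ and $\|u\|_{L^\infty}$. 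This, in turn, depends essentially on the subcritical hypothesis $\alpha>\frac12$, which keeps the time singularity $(t-s)^{-1/(2\alpha)}$ integrable even after all $|\beta|$ derivatives have been transferred onto the nonlinearity.
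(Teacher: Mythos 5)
Your proposal is sound, and you correctly identify and handle the main rigor issue (one cannot simply differentiate the already-constructed $L^\infty$ solution); however, you close the argument by a genuinely different route from the paper's. Both proofs rest on the same structural facts: $D^\beta$ passes through the kernels $\nabla G_\alpha$ and $K*\nabla G_\alpha$ onto the nonlinearity, the Leibniz expansion of $D^\beta(\theta u)$ is \emph{linear} in the top-order derivatives with coefficients $\theta,u\in L^\infty$, and \cref{lem:operator-L1-estimates} supplies the time-integrable singularity $(t-s)^{-1/(2\alpha)}$. The paper (\cref{sec:regularity}) exploits this linearity directly: it constructs the derivatives by successive approximation for the differentiated system, see \cref{ln:first-derivative-scheme}, where the unknown iterates enter linearly with coefficients $(\theta,u)$ bounded via \cref{max-principle-on-theta} and \cref{max-principle-on-u}; as a result its smallness condition \cref{Tcondition-2} is identical to the original condition \cref{Tcondition} --- it involves only the $L^\infty$ norms of the data, not the $C^k_b$ norms --- so the derivative bounds come out on the whole existence interval $[0,\tau]$ in one pass, with no Gr\"onwall inequality and no continuation. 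Your route instead runs a quadratic contraction in $C^k_b$ (via the Banach algebra property), whose lifespan shrinks with the $C^k_b$ norm of the data, and must therefore be supplemented by the singular Volterra-Gr\"onwall estimate (the paper's \cref{lem:volterra-gronwall}, used there only in \cref{sec:globalSection}) together with a continuation argument that requires restarting mild solutions at positive times and invoking uniqueness there --- machinery the paper deploys only for \cref{thm:global-in-time-solution}. The trade-off is real: your identification step is cleaner, since your $C^k_b$-valued fixed point is differentiable by construction and coincides with $(\theta,u)$ by the uniqueness of \cref{thm:existence-of-solutions}, whereas the paper must argue separately, via uniqueness of fixed points of the derivative maps, that its limit $\theta_x$ really equals $D\theta$; the paper's argument, in exchange, avoids all blow-up/continuation analysis and yields the explicit bound $\|D^\beta\theta\|_{L^\infty_{t,x}}\le 2\max\{\|\theta_0\|_{C^k_b},\|u_0\|_{C^k_b}\}$ on the full interval. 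One small repair to your construction: seek the fixed point in $L^\infty([0,T_1];C^k_b(\R^2))$ rather than $C([0,T_1];C^k_b(\R^2))$, because the top-order derivatives of the data are merely bounded and continuous, so $G_\alpha(t)D^\beta\theta_0$ need not converge to $D^\beta\theta_0$ uniformly as $t\to 0$, and your map would not preserve continuity at $t=0$ in the $C^k_b$ topology.
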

\begin{proof}
    See \cref{proof:regularity-of-theta-and-u}.
\end{proof}

Our main results, however, are the following two global existence theorems.
The first of these theorems states that if the initial data is at least twice continuously differentiable, the solution can be extended for all time.  We now restrict to 
$\alpha\in\left(\frac{1}{2},1\right]$ so that we can use
maximum principles.
\begin{theorem} \label{thm:global-in-time-solution}  
    Let $\alpha\in\left(\frac{1}{2},1\right]$ be given.
    Suppose $k\geq2$ and $(\theta_0,u_0)\in (C^k_b(\R^2))^3$. If one has $u_0 = \PV K\ast \theta_0$, then for all $T>0$, there exists a classical solution (i.e., pointwise solution) to \eqref{e:SQG} on $[0,T]$ with $(\theta(t),u(t))\in (C_b^2(\R^2))^3$ for all $t \in [0,T]$. Further, $\theta$ is uniformly bounded by its initial data, i.e. $\|\theta\|_{L^\infty_{t,x}}\leq \|\theta_0\|_{L^\infty_x}.$ 
\end{theorem}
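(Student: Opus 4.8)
The plan is to upgrade the local mild solution to a global classical one using the local theory together with two a priori bounds: a maximum principle for $\theta$ and a finite-in-time bound for $u$. First I would invoke \cref{thm:existence-of-solutions} to produce, on a maximal interval of existence $[0,T^*)$, the unique mild solution $(\theta,u)$ with $u=K*\theta$ at positive times, and then \cref{thm:regularity-of-theta-and-u} (applicable since $k\ge 2$) to promote it to $(\theta(t),u(t))\in(C^2_b(\R^2))^3$. Because $\nabla\cdot u=-\nabla\cdot\nabla^\perp\Lambda^{-1}\theta=0$, the transport is divergence free; and because $\alpha\le 1$, the operator $\Lambda^{2\alpha}$ is well defined on $C^2_b$ (for $\alpha<1$ the second-order Taylor remainder is integrable against $|x-y|^{-2-2\alpha}$, and $\alpha=1$ is the classical Laplacian), so the $C^2_b$ regularity is exactly enough to read \eqref{e:SQG}$_1$ as a pointwise identity and treat $(\theta,u)$ as a classical solution. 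The goal is then to show the solution reaches any prescribed $T$.

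Second I would establish the maximum principle $\|\theta(t)\|_{L^\infty}\le\|\theta_0\|_{L^\infty}$. For a classical solution the pointwise positivity of the fractional Laplacian at an extremum (the C\'ordoba--C\'ordoba inequality) gives $\Lambda^{2\alpha}\theta\ge 0$ at a maximum, whence $\partial_t\theta\le 0$ there since $u\cdot\nabla\theta$ vanishes where $\nabla\theta=0$. The genuine difficulty is that, for non-decaying $C^2_b$ data, the spatial supremum of $\theta(t,\cdot)$ need not be attained on $\R^2$. I would run the argument along a maximizing sequence $x_n$ with $\theta(t,x_n)\to\sup_x\theta(t,x)$, using the uniform $C^2_b$ control to arrange $|\nabla\theta(t,x_n)|\to 0$ and, from the singular-integral representation of $\Lambda^{2\alpha}$, the lower bound $\Lambda^{2\alpha}\theta(t,x_n)\ge -o(1)$; this yields $\tfrac{d}{dt}\sup_x\theta\le 0$ in the sense of the upper Dini derivative, and symmetrically for $\inf_x\theta$ by applying the same to $-\theta$.

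Third I would bound $\|u(t)\|_{L^\infty}$ on each compact time interval. Writing the second equation of \eqref{SQGintegral}, I use that $G_\alpha(t)$ is convolution with a probability density (hence an $L^\infty$-contraction) together with the kernel bound $\|(K*\nabla G_\alpha(\tau))*f\|_{L^\infty}\lesssim \tau^{-1/(2\alpha)}\|f\|_{L^\infty}$ underlying \cref{thm:existence-of-solutions}, whose integrability in $\tau$ is precisely the subcriticality $\alpha>\tfrac12$. Feeding in $\|\theta(s)\|_{L^\infty}\le\|\theta_0\|_{L^\infty}$ gives
\[
\|u(t)\|_{L^\infty}\le\|u_0\|_{L^\infty}+C\|\theta_0\|_{L^\infty}\int_0^t (t-s)^{-1/(2\alpha)}\|u(s)\|_{L^\infty}\,ds,
\]
so a singular Gr\"onwall (Henry) inequality yields $\|u(t)\|_{L^\infty}\le C(T)<\infty$ on $[0,T]$. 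Since the local existence time in \cref{thm:existence-of-solutions} depends only on $\|\theta_0\|_{L^\infty}$ and $\|u_0\|_{L^\infty}$, these two a priori bounds furnish a uniform lower bound on the step size; iterating local existence, with \cref{thm:regularity-of-theta-and-u} re-propagating $C^2_b$ and the constitutive law persisting at each restart so that the hypothesis $u=K*\theta$ is always available, covers any $[0,T]$ in finitely many steps. This forces $T^*=\infty$ and delivers both the global classical solution and the stated bound.

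The main obstacle is the maximum principle in the non-decaying setting: the C\'ordoba--C\'ordoba argument must be carried out along near-maxima rather than at an attained extremum, which requires quantitative use of the uniform $C^2_b$ bounds to control both the decay of $\nabla\theta$ and the sign of the singular integral defining $\Lambda^{2\alpha}\theta$ near those points. A secondary technical point is verifying that the constitutive law $u=K*\theta$, with the uniform convergence of \cref{e:InitCondUnif}, persists at each restart time so that \cref{thm:regularity-of-theta-and-u} can legitimately be reapplied there.
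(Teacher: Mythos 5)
Your overall skeleton---local existence via \cref{thm:existence-of-solutions}, propagation of $C^2_b$ regularity via \cref{thm:regularity-of-theta-and-u}, upgrading to a classical solution via \cref{prop:constitutive-law-holds} and \cref{P:SQGMotivation}, a maximum principle for $\theta$, a singular Gr\"onwall bound for $u$, and a continuation argument---is the same as the paper's. Your velocity bound is exactly \cref{betterubound} (Volterra--Gr\"onwall as in \cref{lem:volterra-gronwall}), and your continuation step is in fact organized more cleanly than the paper's: by fixing $T$ first and extracting a uniform step size from $\|u(t)\|_{L^\infty}\le C(T)$ on $[0,T]$, you avoid the paper's divergent-series argument in \cref{proof-of-global-in-time-solution}, where the step sizes satisfy $\tau_n\gtrsim e^{-\lambda S_{n-1}}$ and one must check $\sum_n\tau_n=\infty$ by an ODE comparison. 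You also correctly flag the need to verify the constitutive law at restart times, a point the paper itself treats only implicitly through \cref{prop:constitutive-law-holds}.

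The genuine gap is in your maximum principle, and it matters because the theorem includes $\alpha=1$. The paper's proof (\cref{prop:theta-max-principle}) never argues at extrema: it localizes with cutoffs $\phi_R$, runs $L^p$ energy estimates using the C\'ordoba--C\'ordoba integral positivity $\int|\phi_R\theta|^{p-2}\phi_R\theta\,\Lambda^{2\alpha}(\phi_R\theta)\,dx\ge 0$, controls the commutator $[\Lambda^{2\alpha},\phi_R]\theta$ by near/far splitting and the scaling $\|\Lambda^{2\alpha}\phi_R\|_{L^\infty}=R^{-2\alpha}\|\Lambda^{2\alpha}\phi\|_{L^\infty}$, and then sends $p\to\infty$ followed by $R\to\infty$; the case $\alpha=1$ is handled by the same scheme with the ordinary Laplacian. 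Your pointwise argument along near-maximizing sequences can be made rigorous for $\alpha\in(1/2,1)$: if $\theta(t,x_n)\ge\sup_x\theta(t,x)-\epsilon_n$, the far field of the singular integral only needs $\theta(x_n)-\theta(y)\ge-\epsilon_n$, the near field only needs PV symmetry plus the $C^2_b$ bound, and choosing the splitting radius $\delta=\epsilon_n^{1/2}$ gives $\Lambda^{2\alpha}\theta(t,x_n)\ge-C\epsilon_n^{1-\alpha}\to 0$. But at $\alpha=1$ the operator is the local Laplacian, there is no singular-integral representation to exploit, and near-maximality plus a uniform $C^2_b$ bound does \emph{not} constrain the sign of $\Delta\theta(x_n)$: one can construct $f\in C^2_b(\R)$ with $f(x_n)\to\sup f$ and $f''(x_n)\equiv c>0$ by placing tiny convex dimples of height $O(r_n^2)$ at the points $x_n$, since $f''$ is bounded but need not be uniformly continuous. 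So "uniform $C^2_b$ control" is not the mechanism that selects a good maximizing sequence, and your Dini-derivative inequality $\frac{d}{dt}\sup_x\theta\le 0$ is unjustified at $\alpha=1$. The standard repair is a penalization rather than a bare maximizing sequence: maximize $\theta(t,\cdot)-\epsilon\eta$ with $\eta$ smooth, growing at infinity, with bounded first and second derivatives; the maximum is attained at some $x_\epsilon$, where $\nabla\theta=\epsilon\nabla\eta$ and $D^2\theta\le\epsilon D^2\eta$, hence $\Delta\theta(x_\epsilon)\le C\epsilon$ (and likewise $\Lambda^{2\alpha}\theta(x_\epsilon)\ge-C\epsilon$ in the fractional case), after which one lets $\epsilon\to 0$. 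Either that idea or the paper's cutoff-and-$L^p$ localization is needed to close the argument over the full range $\alpha\in(1/2,1]$.
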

\begin{proof}
    See \cref{proof-of-global-in-time-solution}.
\end{proof}

Exploiting the $C^k_b$ solutions of \cref{thm:global-in-time-solution} and
a density argument, we then obtain our second main result, 
the extension of the solutions with $L^\infty$ data to an arbitrary time.
\begin{theorem} \label{thm:L-infty-global-solution}
    Let $\alpha\in\left(\frac{1}{2},1\right]$ be given.
    Suppose that $(\theta_0,u_0)\in (L^\infty(\mathbb{R}^{2}))^{3}$. If one has $u_0 = \PV K\ast \theta_0$, then for arbitrary time $T>0$, there exists a mild solution $(\theta,u)\in (L^\infty([0,T]\times\R^2))^3$ to \eqref{e:SQG} on $[0,T]$. Further, $\theta$ is uniformly bounded by its initial data, i.e. $\|\theta\|_{L^\infty_{t,x}}\leq \|\theta_0\|_{L^\infty_x}.$  
\end{theorem}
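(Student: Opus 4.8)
The plan is to realize the $L^\infty$ solution as a limit of the global $C^2_b$ solutions supplied by \cref{thm:global-in-time-solution}, exploiting the fact that the maximum principle there furnishes bounds that are \emph{uniform} along the approximating sequence. Fix $T>0$ and let $\rho_n$ be a standard mollifier, and set $\theta_0^n:=\rho_n*\theta_0$, $u_0^n:=\rho_n*u_0$. Then $(\theta_0^n,u_0^n)\in(C^\infty_b(\R^2))^3\subset(C^2_b(\R^2))^3$, with $\|\theta_0^n\|_{L^\infty}\le\|\theta_0\|_{L^\infty}$ and $\|u_0^n\|_{L^\infty}\le\|u_0\|_{L^\infty}$. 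Since convolution with $\rho_n$ commutes with the principal-value operator $\PV K\ast(\cdot)$ (a standard mollification property; cf.\ \cref{preliminary}), the hypothesis $u_0=\PV K\ast\theta_0$ transfers to $u_0^n=\PV K\ast\theta_0^n$, with the principal value now converging uniformly in the sense of \cref{e:InitCondUnif} because $\theta_0^n$ is smooth. Thus \cref{thm:global-in-time-solution} (with $k=2$) applies to each $(\theta_0^n,u_0^n)$ and produces a classical (and hence mild) solution $(\theta^n,u^n)$ on $[0,T]$ obeying $\|\theta^n\|_{L^\infty_{t,x}}\le\|\theta_0^n\|_{L^\infty}\le\|\theta_0\|_{L^\infty}=:M$.

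Next I would establish bounds on $(\theta^n,u^n)$ uniform in $n$. For $\theta^n$ this is the maximum principle above. For $u^n$ I would use the second line of \eqref{SQGintegral}: writing $\|K\ast\nabla G_\alpha(\tau)\|_{L^1}\lesssim\tau^{-1/(2\alpha)}$ (established in the earlier sections) together with the contractivity of $G_\alpha$ on $L^\infty$, one obtains $\|u^n(t)\|_{L^\infty}\le\|u_0\|_{L^\infty}+CM\int_0^t(t-s)^{-1/(2\alpha)}\|u^n(s)\|_{L^\infty}\,ds$, and since $\alpha>\tfrac12$ gives $\tfrac1{2\alpha}<1$, the singular Gr\"onwall inequality yields $\sup_{t\in[0,T]}\|u^n(t)\|_{L^\infty}\le C(T,M,\|u_0\|_{L^\infty})$, uniformly in $n$. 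These $L^\infty$ bounds are not enough to pass to the limit in the nonlinearity, so I would then prove uniform-in-$n$ interior regularity for positive times: for every $t_0\in(0,T)$ and every $\gamma\in(0,2\alpha-1)$, the smoothing estimates $[G_\alpha(t)f]_{C^\gamma}\lesssim t^{-\gamma/(2\alpha)}\|f\|_{L^\infty}$ and $[\nabla G_\alpha(\tau)\ast g]_{C^\gamma}\lesssim\tau^{-(1+\gamma)/(2\alpha)}\|g\|_{L^\infty}$, applied termwise in \eqref{SQGintegral} (the $s$-integral converging since $(1+\gamma)/(2\alpha)<1$), bound $\theta^n(t)$ and $u^n(t)$ in $C^\gamma(\R^2)$ uniformly in $n$ for $t\in[t_0,T]$; a companion estimate gives uniform equicontinuity in $t$ on $[t_0,T]$.

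With uniform $L^\infty$ bounds on $[0,T]\times\R^2$ and uniform joint equicontinuity on $[t_0,T]\times\overline{B_R}$ for every $t_0>0$ and $R>0$, Arzel\`a--Ascoli together with a diagonal extraction produces a subsequence (not relabeled) and a limit $(\theta,u)$ with $\theta^n\to\theta$ and $u^n\to u$ locally uniformly on $(0,T]\times\R^2$, where $(\theta,u)\in(L^\infty([0,T]\times\R^2))^3$ inherits the uniform bounds; in particular $\|\theta\|_{L^\infty_{t,x}}\le M=\|\theta_0\|_{L^\infty}$, the asserted maximum principle. It remains to pass to the limit in \eqref{SQGintegral} for each fixed $t\in(0,T]$. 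The linear terms converge because $G_\alpha(t)\theta_0^n=\rho_n*(G_\alpha(t)\theta_0)\to G_\alpha(t)\theta_0$ locally uniformly (and similarly for $u$). For the nonlinear terms, $\theta^n u^n\to\theta u$ locally uniformly on $(0,T]\times\R^2$ and is uniformly bounded, while the kernels $\nabla G_\alpha(t-s)$ and $K\ast\nabla G_\alpha(t-s)$ lie in $L^1_x$ with norm $\lesssim(t-s)^{-1/(2\alpha)}$, integrable in $s$ on $[0,t]$; splitting the spatial integral into $|y|\le R$ (local uniform convergence) and $|y|>R$ (uniform $L^\infty$ bound against a small kernel tail), and handling the time integral away from and near $s=t$, a dominated-convergence argument shows the integrals converge to the corresponding expressions for $(\theta,u)$. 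Hence $(\theta,u)$ is a mild solution on $[0,T]$.

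The main obstacle I anticipate is the compactness step: because $\theta_0\in L^\infty$ is merely bounded, its mollifications do not converge uniformly, so the $L^\infty$ bounds alone yield only weak-$*$ compactness, which cannot be pushed through the product $\theta u$. The crux is therefore to extract genuine (locally uniform) compactness, and this is exactly where subcriticality $\alpha>\tfrac12$ is essential: it makes the time integrals of the singular kernels $\tau^{-(1+\gamma)/(2\alpha)}$ convergent for a positive H\"older exponent $\gamma<2\alpha-1$, supplying the uniform interior regularity that drives Arzel\`a--Ascoli.
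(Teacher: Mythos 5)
Your proposal is correct in its overall architecture, but it takes a genuinely different route from the paper. The paper's proof sidesteps compactness entirely: it first runs the local theory (\cref{thm:existence-of-solutions}) for a short time and invokes \cref{P:gamma-holder-regularity} to replace the initial data by $(\theta(t_*),u(t_*))$ for an arbitrarily small $t_*>0$, which lies in $C^\gamma(\R^2)$ and is therefore uniformly continuous. Mollifications of \emph{this} data converge uniformly on $\R^2$, and the paper then shows the corresponding global classical solutions form a Cauchy sequence in $L^\infty_{t,x}$ by subtracting the two Duhamel formulas in \cref{SQGintegral}, applying the kernel bounds of \cref{lem:operator-L1-estimates}, and closing with the Volterra--Gr\"onwall inequality (\cref{lem:volterra-gronwall}); the uniform limit is then trivially a mild solution, with full-sequence convergence. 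You instead mollify the raw $L^\infty$ data (whose mollifications do \emph{not} converge uniformly, as you correctly diagnose), so the Cauchy/stability estimate is unavailable to you; you compensate with compactness: uniform $L^\infty$ bounds (maximum principle as in \cref{prop:theta-max-principle}, singular Gr\"onwall as in \cref{betterubound}), uniform interior $C^\gamma$ smoothing for $t\ge t_0>0$, time equicontinuity, Arzel\`a--Ascoli with a diagonal extraction, and a split dominated-convergence argument to pass to the limit in the nonlinear Duhamel integrals. Both routes rest on the same quantitative ingredients; the paper's pre-smoothing trick buys a much shorter limit passage and convergence of the whole sequence, while yours avoids re-initializing at positive time at the cost of a longer compactness argument and only subsequential convergence (which suffices for existence, since the theorem does not assert uniqueness).

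One step of yours is mis-justified, though easily repaired. You claim the principal value $\PV K\ast\theta_0^n$ converges uniformly over annuli ``because $\theta_0^n$ is smooth.'' Smoothness controls nothing at spatial infinity and does not by itself yield \cref{e:InitCondUnif}. The correct reasoning is the commutation identity $(\CharFunc_{A_{r,R}(0)}K)\ast(\rho_n\ast\theta_0)=\rho_n\ast\bigl((\CharFunc_{A_{r,R}(0)}K)\ast\theta_0\bigr)$, which shows the annuli-uniform bound for $\theta_0^n$ is \emph{inherited} from that of $\theta_0$ (reading the hypothesis $u_0=\PV K\ast\theta_0$ as including the uniform-over-annuli convergence, as the paper itself implicitly does when it writes $\|\PV K\ast(\varphi_n\ast\theta)\|_{L^\infty}\le\|\varphi_n\|_{L^1}\|\PV K\ast\theta\|_{L^\infty}$ and as \cref{L:KUniformConv} requires). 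With that substitution your argument closes.
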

\begin{proof}
    See \cref{proof:L-infty-global-solution}.
\end{proof}

Finally, we describe
our second approach to making sense of the constitutive law
for non-decaying solutions, which is to introduce a 
version of \cref{e:SQG} in which the constitutive law is relaxed; we call this
a Serfati-type constitutive law.
For the two-dimensional Euler equations, Serfati proved the existence and uniqueness
of solutions with velocity and vorticity in $L^{\infty}(\mathbb{R}^{2})$ 
\cite{serfati} (see also 
\cite{AKLN} for further exposition on Serfati's work).  
With vorticity in $L^{\infty},$ the constitutive law (which, for the Euler equations, is the Biot-Savart law) 
cannot be used to obtain the velocity from vorticity; in its place, Serfati used an integral identity relating the velocity and vorticity for a solution to the Euler equations that applies in the case of bounded vorticity. 
Three of the present authors and Erickson have used an analogue of the Serfati identity for 
inviscid SQG \cite{ACEK} to prove local existence of solutions of SQG
in uniformly local Sobolev spaces and H\"{o}lder spaces.
In addition to this analogue of the Serfati integral identity for SQG, the work \cite{ACEK} also 
uses a related relaxation of the 
constitutive law involving the 
Littlewood-Paley operators $\dot{\Delta}_{j}$ (see \cref{preliminary} 
below for details
on the Littlewood-Paley blocks). In the present work, we use the Littlewood-Paley relaxation of the
constitutive law.
In the dissipative case, the new system is
\begin{align}\label{ssqg}
\tag{$SSQG$}
\begin{cases}
\prt_t \theta + u\cdot\grad\theta + \nu\,\Lambda^{2\alpha}\theta = 0
  &\text{in }[0,T]\times\R^2,\\
\dot\Delta_j u = (\dot\Delta_j K)*\theta
  &\text{in }[0,T]\times\R^2,\ \forall\,j\in\Z,\\
\theta|_{t=0} = \theta_0
  &\text{in }\R^2,\\
\dot\Delta_j u_0 = (\dot\Delta_j K)*\theta_0
  &\forall\,j\in\Z,\\
\dv u_0 = 0 &\text{in }\R^2.
\end{cases}
\end{align}
If the initial data $(\theta_{0},u_{0})$ satisfies \cref{ssqg}$_{4},$
then a pair $(\theta,u)$ satisfying \eqref{SQGintegral} is called a \emph{mild solution} to the Serfati‐type surface quasi‐geostrophic system \eqref{ssqg}.  
Of course, this relaxation allows additional data 
$\theta_{0}\in L^{\infty}(\mathbb{R}^{2})$ to be treated.  
That is, if $\theta_{0}\in L^{\infty}(\mathbb{R}^{2})$ is such that there 
exists $u_{0}\in(L^{\infty}(\mathbb{R}^{2}))^{2}$ for which
\eqref{ssqg}$_{2}$ holds, then we can prove the same
results as for \eqref{e:SQG}.  This is the content of the next theorem, which is treated only briefly in the remaining text, as the proof 
follows immediately from the proofs of the above theorems.
\begin{theorem} \label{cor:solution-to-sqg}
Let $\theta_0\in L^\infty (\R^2)$ and $u_0\in (L^\infty(\R^2))^2$ satisfy the initial data condition $\dot{\Delta}_j u_0 = (\dot{\Delta}_j K) * \theta_0$ for all $j\in \Z$.  Then the conclusions of \cref{thm:existence-of-solutions}, \cref{thm:regularity-of-theta-and-u}, \cref{thm:global-in-time-solution}, and \cref{thm:L-infty-global-solution}
hold true for \eqref{ssqg}.
\end{theorem}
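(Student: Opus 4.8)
The plan is to observe that \eqref{cor:solution-to-sqg} requires essentially no new analytic work: the entire machinery developed for \eqref{e:SQG} in \cref{thm:existence-of-solutions,thm:regularity-of-theta-and-u,thm:global-in-time-solution,thm:L-infty-global-solution} rests on the mild formulation \eqref{SQGintegral}, and that formulation is \emph{identical} for both systems. The only difference between \eqref{e:SQG} and \eqref{ssqg} lies in the constitutive law relating $u_0$ and $\theta_0$: the former asks $u_0 = \PV K \ast \theta_0$, while the latter relaxes this to the Littlewood–Paley identity $\dot\Delta_j u_0 = (\dot\Delta_j K)\ast\theta_0$ for all $j\in\Z$. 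Since \cref{D:mildsolution} is stated without reference to any relationship between $u_0$ and $\theta_0$, the existence, uniqueness, regularity, global extension, and maximum-principle arguments that take $(\theta_0,u_0)\in(L^\infty)^3$ (or $(C^k_b)^3$) as given input go through verbatim. Thus the bulk of the statement is an immediate corollary.

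The first step I would carry out is to verify that the weaker Serfati-type hypothesis genuinely admits $(\theta_0,u_0)$ of the required regularity as valid initial data, so that the hypotheses of the four cited theorems are met. Concretely, given $\theta_0\in L^\infty$ (respectively $C^k_b$) together with some $u_0\in(L^\infty)^2$ (resp.\ $(C^k_b)^2$) satisfying $\dot\Delta_j u_0 = (\dot\Delta_j K)\ast\theta_0$, I simply feed this pair into \cref{thm:existence-of-solutions} to obtain the local mild solution, into \cref{thm:regularity-of-theta-and-u} to propagate $C^k_b$ regularity, and into \cref{thm:global-in-time-solution,thm:L-infty-global-solution} for global existence and the bound $\|\theta\|_{L^\infty_{t,x}}\le\|\theta_0\|_{L^\infty_x}$. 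None of these four theorems uses the precise form $u_0=\PV K\ast\theta_0$ in constructing the solution; that hypothesis is invoked only to conclude \emph{afterwards} that $u=K\ast\theta$ holds at positive times (the second clause of \cref{thm:existence-of-solutions}). Consequently all conclusions that do not assert the unrelaxed pointwise constitutive law transfer with no change.

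The one place requiring genuine care — and the step I expect to be the main obstacle — is recovering the analogue of the recovery statement ``$u = K\ast\theta$ at positive times,'' now in its relaxed Littlewood–Paley form $\dot\Delta_j u = (\dot\Delta_j K)\ast\theta$ for $t>0$. The original proof of this clause in \cref{thm:existence-of-solutions} presumably applies $K\ast(\cdot)$ to the $\theta$-equation of \eqref{SQGintegral} and matches it against the $u$-equation, using uniform convergence of the principal value to justify the interchange of $\PV K\ast$ with the time integral and the semigroup $G_\alpha(t)$. For the Serfati model one replaces this by applying the frequency-localized operator $\dot\Delta_j$, which is bounded on $L^\infty$ and therefore removes entirely the convergence subtleties that forced the uniform-convergence hypothesis \eqref{e:InitCondUnif} in the first place: the kernel $\dot\Delta_j K$ is smooth with good decay, so $(\dot\Delta_j K)\ast(\cdot)$ commutes with $G_\alpha(t-s)$ and with $\int_0^t$ for free. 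I would verify that applying $\dot\Delta_j$ to the $\theta$-equation and commuting it past $G_\alpha$ and the convolution with $\nabla G_\alpha$ reproduces, block by block, the $u$-equation of \eqref{SQGintegral} given the initial compatibility $\dot\Delta_j u_0=(\dot\Delta_j K)\ast\theta_0$, thereby yielding $\dot\Delta_j u=(\dot\Delta_j K)\ast\theta$ for every $j$ and all $t>0$. Since this is strictly easier than the $L^\infty$ principal-value bookkeeping already handled in the main text, the remaining assertions of \cref{cor:solution-to-sqg} follow, and the proof amounts to this verification together with a citation of the four preceding theorems.
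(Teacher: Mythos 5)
Your core idea matches the paper's: the paper's proof of this theorem is essentially a citation of the four preceding theorems, and the one substantive verification you single out --- applying $(\dot{\Delta}_j K)\ast$ to the $\theta$-equation of \cref{SQGintegral} and commuting it past $G_\alpha(t)$ and the time integral, which is legitimate because $\dot{\Delta}_j K \in \Cal{S}(\R^2)$ (\cref{L:DeltaK}) --- is precisely the paper's \cref{P:time-regularity}(1), which shows the relaxed law $\dot{\Delta}_j u(t) = (\dot{\Delta}_j K)\ast\theta(t)$ persists for all $t$.

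There is, however, a genuine gap in your claim that the conclusions transfer ``verbatim'' because ``none of these four theorems uses the precise form $u_0 = \PV K\ast\theta_0$ in constructing the solution.'' That is accurate for local existence and uniqueness (\cref{thm:existence-of-solutions}) and for the propagation of $C^k_b$ regularity (\cref{thm:regularity-of-theta-and-u}), but it is false for \cref{thm:global-in-time-solution} and \cref{thm:L-infty-global-solution}. Global extension hinges on the maximum principle $\|\theta\|_{L^\infty_{t,x}}\le\|\theta_0\|_{L^\infty_x}$ of \cref{prop:theta-max-principle}, and that proof requires two facts which, for \cref{e:SQG}, are consequences of the unrelaxed constitutive law: (i) the mild solution is a classical solution (\cref{P:SQGMotivation}, whose proof invokes \cref{prop:constitutive-law-holds}), and (ii) $u$ is divergence free --- this is exactly what annihilates the transport term in \cref{ln:divergence-of-u-zero}, and it is also used inside \cref{P:SQGMotivation} itself, e.g.\ at \cref{UCintime}. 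In the relaxed setting, the Littlewood--Paley identity only gives $\dv \dot{\Delta}_j u = 0$ for each $j$, which does not immediately yield $\dv u = 0$ for a merely bounded $u$; the paper resolves this by making $\dv u_0 = 0$ part of the system \eqref{ssqg}, propagating it in time via \cref{P:time-regularity}(4) (which rests on \cref{L:divuZero}), and then proving a separate SSQG analogue of the classical-solution proposition, namely \cref{P:lp-SQGMotivation}, in which these ingredients replace \cref{prop:constitutive-law-holds}. Your proposal never mentions the divergence-free condition or how the maximum principle is to be run under the relaxed law, so as written it cannot deliver the global-in-time conclusions: the LP-law preservation you verify is necessary but not sufficient, and one cannot literally ``feed'' data satisfying only the relaxed law into \cref{thm:global-in-time-solution}, whose hypothesis is the unrelaxed identity $u_0 = \PV K \ast\theta_0$.
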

\begin{proof}
    See \cref{proof:solution-ssqg}.
\end{proof}

The organization of the paper is as follows.
In \cref{preliminary}, 
we give various definitions and 
estimates related to the fractional Laplacian 
$\Lambda^{2\alpha}$, the fractional heat kernel,
and the kernel $K$.
In \cref{sec:mild-solution}, we establish various
{\it a priori} continuity and differentiability properties
of mild solutions of \eqref{e:SQG} and \eqref{ssqg}.
In \cref{existence}, we prove the existence of a 
finite-in-time mild solution with $L^{\infty}$ data; this 
is the proof of \cref{thm:existence-of-solutions}.
In \cref{sec:regularity}, 
we prove that $C^{k}$-regularity of 
the initial data is propagated in time by the mild 
solution; this is the proof of 
\cref{thm:regularity-of-theta-and-u}.
In \cref{sec:globalSection}, we prove that given $C^{2}$ initial data, the solution can be shown to exist for all time and corresponds to a classical solution; this is the proof
of \cref{thm:global-in-time-solution}. After proving the existence of 
global-in-time solutions with regular data, we conclude the section by extending 
solutions with $L^\infty$ initial data to be global in time,
proving \cref{thm:L-infty-global-solution}.
Finally, in \cref{sec:appendix}, we prove a technical property of the fractional Laplacian.

\section{ $L^1(\R^2)$ estimates for various operators}\label{preliminary}

\subsection{Littlewood-Paley operators}\label{S:LPOperators}
We begin with an overview of the Littlewood-Paley operators and some of their properties.  There exist two functions ${\chi}, {\varphi} \in \mathcal{S}(\R^d)$ with supp $\wh{\chi}\subset \{\xi\in \R^d: |\xi |\leq 1\}$ and supp $\wh{\varphi}\subset \{\xi\in \R^d: \frac{1}{2} \leq|\xi |\leq \frac{3}{2} \}$, such that, setting $\varphi_j(x) = 2^{jd} \varphi(2^j x)$ for all $j \in \Z$,
\begin{equation*}
\begin{split}
	&\wh{\chi}+ \sum_{j\geq 0} \wh{\varphi}_j
		= \wh{\chi} + \sum_{j\geq 0} \wh{\varphi}(2^{-j} \cdot) 
		\equiv 1.
\end{split}
\end{equation*}

For $n\in\Z$, define ${\chi}_n \in \mathcal{S}(\R^d)$ in terms of its Fourier transform ${\wh{\chi}}_n$, where ${\wh{\chi}}_n$ satisfies 
\begin{equation*}
{\wh{\chi}}_n (\xi) =   \wh{\chi}(\xi) + \sum_{j=0}^n \wh{\varphi}_j(\xi)
\end{equation*}
for all $\xi\in\R^d$.  For $f\in \mathcal{S}'(\R^d)$, define the operator $S_n$ by  
\begin{equation*}
S_n f = {{\chi}}_n \ast f.
\end{equation*}
Finally, for $f\in \mathcal{S}'(\R^d)$ and $j\in\Z$, define the inhomogeneous Littlewood-Paley operators ${\Delta}_j$ by
\begin{align*}
    \begin{matrix}
        &\Delta_j f  = \left\{
            \begin{matrix}
                     0, \qquad j<-1\\
                 \chi\ast f,  \qquad j=-1\\
                \varphi_j\ast f, \qquad j\geq 0,
            \end{matrix}
            \right.
    \end{matrix}
\end{align*}
and, for all $j\in\Z$, define the homogeneous Littlewood-Paley operators $\dot{\Delta}_j$ by
\begin{equation*}
    \dot{\Delta}_j f = {\varphi}_j \ast f.
\end{equation*}  
Note that $\dot{\Delta}_j f = {\Delta}_j f$ when $j\geq 0$.

\subsection{Kernels for the fractional heat equation}

We first introduce the fractional Laplacian, $\Lambda^{2\alpha}$, defined in $\Cal{S}'(\R^2)$ via its Fourier transform as follows:
\begin{equation} \label{def:frac-laplacian-fourier}
\mathcal{F} (\Lambda^{2\alpha} f) (\xi) = -|\xi|^{\alpha} \mathcal{F} f(\xi).
\end{equation}
In most of this work, we will use the operator $\Lambda^{2\alpha}$ with the above representation.  In Section \ref{sec:globalSection}, however, we apply an alternative definition of the fractional Laplacian. Specifically, we let \( \Lambda^{2\alpha}_I f(x) \) denote the singular integral operator defined formally by
\begin{equation}\label{def:frac-laplacian-sio}
    \Lambda^{2\alpha}_I f(x) := c_{\alpha} \lim_{\epsilon \to 0^+} \int_{|x - y| > \epsilon} \frac{f(x) - f(y)}{|x - y|^{2 + 2\alpha}} \, dy,
\end{equation}
where \( c_\alpha > 0 \) is a normalization constant,  $$c_\alpha = \frac{4^\alpha \Gamma(1+\alpha)}{\pi |\Gamma(-\alpha)|}.$$
We define the domain \( \operatorname{Dom}(\Lambda^{2\alpha}_I, L^\infty) \) to be the set of functions \( f \in L^\infty(\mathbb{R}^2) \) for which the above limit exists and is finite for almost every \( x \in \mathbb{R}^2 \). As a consequence of \cref{lem:fractional-laplacian-agreeing}, the definitions of $\Lambda^{2\alpha}$ and $\Lambda^{2\alpha}_I$ coincide on $C^2_b(\R^2)$.
 
We now recall that the fractional heat kernel $g_\alpha(t,x)$ is the solution to 
\begin{align}\label{ln:frac-heat-property}
    (\partial_t +\nu \Lambda^{2\alpha}) g_{\alpha} = 0
\end{align} 
 on $\R^2$ subject to the initial condition $g_\alpha(0,x) = \delta(x)$. It is easily seen that the Fourier transform of $g_\alpha(t,x)$ is given by
\begin{equation}\label{def:frac-g}
    \wh{g}_\alpha(t, \xi)
        = \int_{\R^2}
            g_\alpha (t, x) e^{-i \xi \cdot x} \, dx
        = e^{-\nu|\xi|^{2\alpha} t}.
\end{equation}
Often in what follows, we omit the spatial argument for notational convenience and write $g_\alpha(t) := g_\alpha(t, \cdot)$. We denote the fractional heat semigroup acting on $f\in L^{\infty}(\R^2)$ by
\begin{align} \label{def:heatsemigroup}
    G_\alpha (t) f = g_\alpha (t)\ast f.  
\end{align}
While it is known that $g_\alpha(t,x)$ cannot be written in terms of an elementary function, for $\alpha \in [0,1]$ and $t>0$, $g_\alpha(t,x)$ is a nonnegative and non-increasing radially-symmetric smooth function which satisfies the dilation relation
\begin{equation} \label{ln:dilation-relation-frac-g}
    g_\alpha(t,x) = (\nu t)^{-\frac{1}{\al}} g_\alpha\left(1,x(\nu t)^{-\frac{1}{2\alpha}}\right).
\end{equation}

\subsection{The constitutive law}\label{S:ConstLaw} For $\theta \in L^\iny(\R^2)$, $K * \theta $ is not well-defined as a convolution.  Hence, we cannot obtain a constitutive law in the form $u = K * \theta$. To evade this restriction, we consider two modifications of the constitutive law, which we now describe.

For the first modification, we let $A_{\eps, R}(0)$ denote the annulus centered at the origin with inner radius $\eps$ and outer radius $R$.

Given $\theta \in L^\iny(\R^2)$,  by \cref{e:ConstLawPV} we mean, more precisely,
\begin{align}\label{e:PVKDetail}
    \begin{split}
    \PV K * \theta(x)
        &:= \lim_{(\eps, R) \to (0, \iny)}
            (\CharFunc_{A_{\eps, R}(0)} K) * \theta(x) \\
        &= \lim_{(\eps, R) \to (0, \iny)}
        \int_{\epsilon<|x-y|<R} K(x - y) \theta(y)
            \, dy
    \end{split}
\end{align}
    for any $x \in \R^2$.
    
\begin{definition}\label{def:convergence-uniformly-over-annuli}
    We say that $\PV K * \theta(x)$ converges \textit{uniformly over annuli} if
    \begin{align}\label{e:InitCondUnif}
        \sup_{(r, R) \in (0, \iny)^2}
            \norm{(\CharFunc_{A_{r, R}(0)} K) * f}_{L^\iny}
            < \iny.
    \end{align}    
\end{definition}

To motivate the well-definedness of the above constitutive law, we provide a few classes of initial data as examples.

\begin{remark} Define the homogeneous Besov space $\dot{B}_{p,q}^s(\R^2)$ as in Definition 2.15 of \cite{BahouriCheminDanchin2011}. One has $\dot{B}_{\infty,1}^{0}(\R^2)\subset L^\infty(\R^2)$ (see Proposition 2.39 of \cite{BahouriCheminDanchin2011}). Moreover, as a consequence of the proof of Theorem 1.3 of \cite{Sawano2020} applied to $\dot{B}_{\infty,1}^{0}(\R^2)$, one can show that $\|R_i \theta_0\|_{\dot{B}_{\infty,1}^{0}(\R^2)}\leq \|\theta_0\|_{\dot{B}_{\infty,1}^{0}(\R^2)}$ for $\theta_0\in \dot{B}_{\infty,1}^0(\R^2)$. A simple application of \cref{L:KgradgalUnifBound} below to the Littlewood Paley expansion of $\theta_0$ reveals that the convergence is uniform over annuli. Hence, defining $u_0 = \PV K\ast \theta_0$, the pair $(\theta_0,u_0)$ satisfies the constitutive law.
\end{remark}
\begin{remark}
If $f(x) \in C_c^\infty(\R^2)$ is any bump function, then the function $$\theta_0(x) = \sum_{j=1}^\infty \sum_{i=1}^\infty f(x_1-2^i,x_2-2^j)$$ has a bounded Riesz transform which converges uniformly over annuli. Hence, setting $u_0 = \PV K\ast \theta_0$, the pair $(\theta_0,u_0)$ satisfies the constitutive law. Any variation of this function involving a sufficiently sparse sum of uniformly bounded compactly supported functions also suffices.
\end{remark}
\begin{remark}
    For $r > 0$ a non-integer, let $\psi \in C^{r+1}_0(\R^2)$ (that is, the space of functions with $r+1$ bounded derivatives vanishing at infinity) and set $u_0 = \nabla^\perp \psi$ and $\theta_0 = \Lambda \psi$. Then $(\theta,u)$ will be a pair satisfying the constitutive law.
\end{remark}

In \cref{L:KUniformConv}, we show that $K$ and $g_\alpha$ commute, but only if we assume a kind of uniform convergence over annuli of the principal value integral.
\begin{lemma}\label{L:KUniformConv}
    Let $f \in L^\iny(\R^2)$ and suppose that $\PV K * f$ exists, converges uniformly over annuli (see \cref{def:convergence-uniformly-over-annuli}), and lies in $ L^\iny(\R^2)$. Then
    \begin{align}\label{e:NiceInitConst}
        g_\al(t) * (\PV K * f)(x)
            = \PV K * (g_\al(t) * f)(x).
    \end{align}
\end{lemma}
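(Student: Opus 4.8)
The plan is to reduce everything to truncated kernels $K_{\eps, R} := \CharFunc_{A_{\eps, R}(0)} K$, which are genuinely $L^1$, so that Fubini applies cleanly, and then to pass to the limit $(\eps, R) \to (0, \iny)$ using the uniform-over-annuli hypothesis to justify dominated convergence. Throughout I would fix $t > 0$ and $x \in \R^2$, and use that $g_\al(t) \in L^1(\R^2)$ with $\norm{g_\al(t)}_{L^1} = 1$ (it is nonnegative by the discussion following \eqref{ln:dilation-relation-frac-g}, and $\wh{g}_\al(t, 0) = 1$). Since $\abs{K(z)} = (2\pi \abs{z}^2)^{-1}$, one computes $\norm{K_{\eps, R}}_{L^1} = \log(R/\eps) < \iny$ for every $0 < \eps < R < \iny$, so each $K_{\eps, R} \in L^1(\R^2)$.

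First I would establish the commutation identity for the truncated kernels,
\begin{equation}\label{e:truncated-commute}
    g_\al(t) * (K_{\eps, R} * f)(x) = K_{\eps, R} * (g_\al(t) * f)(x),
\end{equation}
valid for every $0 < \eps < R < \iny$. Both sides equal the double integral $\int\int g_\al(t, x - z)\, K_{\eps, R}(z - y)\, f(y)\, dz\, dy$, which is absolutely convergent: bounding $\abs{f} \le \norm{f}_{L^\iny}$, its absolute value is at most $\norm{f}_{L^\iny}\, \norm{K_{\eps, R}}_{L^1}\, \norm{g_\al(t)}_{L^1} < \iny$. Hence Fubini applies and \eqref{e:truncated-commute} follows from the associativity and commutativity of convolution of two $L^1$ functions against an $L^\iny$ function. (Since $L^1 * L^\iny \subset L^\iny$, both $K_{\eps, R} * f$ and $g_\al(t) * f$ are bounded, so every convolution appearing here is well-defined pointwise.)

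Next I would let $(\eps, R) \to (0, \iny)$ in \eqref{e:truncated-commute}. For the left-hand side, the assumption that $\PV K * f$ converges uniformly over annuli, i.e.\ \eqref{e:InitCondUnif}, supplies a finite constant $M := \sup_{(r, R)} \norm{K_{r, R} * f}_{L^\iny}$, so the integrand $z \mapsto g_\al(t, x - z)\, (K_{\eps, R} * f)(z)$ is dominated uniformly in $(\eps, R)$ by the integrable majorant $M\, g_\al(t, x - \cdot)$; since moreover $(K_{\eps, R} * f)(z) \to (\PV K * f)(z)$ for almost every $z$ by hypothesis, dominated convergence gives
\[
    g_\al(t) * (K_{\eps, R} * f)(x) \longrightarrow \int_{\R^2} g_\al(t, x - z)\, (\PV K * f)(z)\, dz = g_\al(t) * (\PV K * f)(x).
\]
Because the left-hand side of \eqref{e:truncated-commute} converges, so does its right-hand side, to the same limit; but the right-hand side is, by \eqref{e:PVKDetail}, exactly the truncated integral defining $\PV K * (g_\al(t) * f)(x)$. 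This simultaneously shows that the latter principal value exists and equals $g_\al(t) * (\PV K * f)(x)$, which is \eqref{e:NiceInitConst}.

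The one genuinely load-bearing step is the dominated-convergence passage on the left-hand side: without a uniform $L^\iny$ bound on the truncations $K_{\eps, R} * f$ there would be no integrable majorant independent of $(\eps, R)$, and the interchange of limit and integral could fail. This is precisely where the uniform-convergence-over-annuli hypothesis is used; everything else is routine Fubini bookkeeping together with the elementary fact that $g_\al(t)$ is an $L^1$ approximate-identity kernel.
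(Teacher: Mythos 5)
Your proposal is correct and follows essentially the same route as the paper: both proofs truncate $K$ to the annuli $A_{\eps,R}(0)$, commute the convolutions for the truncated kernels using that $g_\al(t)$ and $\CharFunc_{A_{\eps,R}(0)}K$ are in $L^1$ while $f\in L^\iny$, and then pass to the limit via dominated convergence, with the uniform-over-annuli bound \eqref{e:InitCondUnif} supplying the integrable majorant $M\,g_\al(t,x-\cdot)$. The only cosmetic difference is the order of operations (you prove the truncated identity first and then take limits on both sides, while the paper pulls the limit through the integral first), and your version has the small virtue of making explicit that the convergence of the right-hand side establishes the existence of $\PV K *(g_\al(t)*f)$ rather than assuming it.
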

\begin{proof}
    Fix $x \in \R^2$ and write $\lim_{r, R}$ for the limit in \cref{e:PVKDetail}.
    Then,
    \begin{align*}
         g_\al(t) * &(\PV K * f)(x)
         	= \int_{\R^2}
            	\brac{g_\al(t, x - y)
                \lim_{r, R}  ((\CharFunc_{A_{r, R}(0)} K)
                * f(y))} \, dy \\
         	&= \int_{\R^2}
            	\lim_{r, R} \brac{g_\al(t, x - y)
                ((\CharFunc_{A_{r, R}(0)} K) * f)(y)} \, dy \\
            &= \lim_{r, R} 
             	\int_{\R^2}
            	\brac{g_\al(t, x - y)
                ((\CharFunc_{A_{r, R}(0)} K) * f)(y)} \, dy  \\
            &= \lim_{r, R} \,
				(g_\al(t) * ((\CharFunc_{A_{r, R}(0)} K)
					* f))(x) \\
            &= \lim_{r, R} \,
				((\CharFunc_{A_{r, R}(0)} K) * (g_\al(t)
					* f))(x) \\
			&= \PV K * (g_\al(t) * f).
    \end{align*}
    Above, we used \cref{e:InitCondUnif} with
    $g_\al(t) \in L^1(\R^2)$ to take the limit outside of
    the integral via the Dominated Convergence Theorem.  We were able to commute the order of convolution before taking the limit,
    using that $g_\al(t)$ and $\CharFunc_{r, R}(0) K$ are
    in $L^1$ while $f \in L^\iny$.
\end{proof}

In \cref{cor:solution-to-sqg}, we take a different approach to the constitutive law that avoids using the principal value integral of $K$. We consider, instead, a constitutive law in the form $\dot{\Delta}_j u = (\dot{\Delta}_j K) * \theta$, taking advantage of the following simple lemma:
\begin{lemma}\label{L:DeltaK}
    For all $j \in \Z$, $\dot{\Delta}_j K \in \Cal{S}(\R^2)$.
\end{lemma}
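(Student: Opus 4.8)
The plan is to show that $\dot\Delta_j K = \varphi_j * K$ is a Schwartz function by analyzing it on the Fourier side, where convolution becomes multiplication. First I would recall that $\widehat{\dot\Delta_j K} = \wh{\varphi}_j \cdot \wh{K}$, so the key is to understand $\wh{K}$. Since $K(x) = -\frac{1}{2\pi} x^\perp / |x|^3$ is a Calder\'on--Zygmund kernel homogeneous of degree $-2$ in $\R^2$ (indeed its components are constant multiples of the Riesz-transform kernels), its distributional Fourier transform is homogeneous of degree $0$ and smooth away from the origin; concretely one has $\wh{K}(\xi) = c\, \xi^\perp / |\xi|$ for a suitable constant $c$ (up to the standard normalization). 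The only obstruction to $\wh{K}$ being a nice multiplier is its singularity at $\xi = 0$.

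The crucial observation is that $\wh{\varphi}_j(\xi) = \wh{\varphi}(2^{-j}\xi)$ is supported in the annulus $\tfrac12 \le |2^{-j}\xi| \le \tfrac32$, i.e. in $\{2^{j-1} \le |\xi| \le 3\cdot 2^{j-1}\}$, which is bounded away from both $0$ and $\infty$. Therefore the product $\wh{\varphi}_j \cdot \wh{K}$ is the product of a compactly supported smooth function with a function that is smooth on the support of $\wh{\varphi}_j$ (since $\wh{K}$ is smooth away from $0$). Hence $\widehat{\dot\Delta_j K} \in C_c^\infty(\R^2) \subset \Cal{S}(\R^2)$. Since the Fourier transform is a bijection on $\Cal{S}(\R^2)$, its inverse image $\dot\Delta_j K$ lies in $\Cal{S}(\R^2)$ as well, which is exactly the claim.

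The main point requiring care is justifying that $\wh{K}$ is genuinely smooth away from the origin, so that multiplying by the annular cutoff $\wh{\varphi}_j$ really does produce a $C_c^\infty$ function. I would establish this by writing $K = \grad^\perp \psi$ with $\psi(x) = -\frac{1}{2\pi|x|}$ as given in \cref{e:K}, and identifying the components of $K$ with Riesz transforms: up to constants, $K = \grad^\perp \Lambda^{-1}\delta$ at the level of kernels, so $\wh{K}(\xi)$ is a constant multiple of $i\,\xi^\perp/|\xi|$, which is manifestly $C^\infty$ on $\R^2 \setminus \{0\}$ and bounded. One could alternatively cite the standard fact that the Fourier transform of an $L^1_{loc}$ Calder\'on--Zygmund kernel of the Riesz type is a homogeneous degree-zero function, smooth off the origin.

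To summarize the steps in order: (i) reduce to the Fourier side, writing $\widehat{\dot\Delta_j K} = \wh{\varphi}_j\,\wh{K}$; (ii) compute or recall $\wh{K}(\xi) = c\,\xi^\perp/|\xi|$ and note it is smooth and bounded on $\R^2\setminus\{0\}$; (iii) observe $\supp \wh{\varphi}_j$ is an annulus avoiding the origin, so the product is $C_c^\infty(\R^2)$; and (iv) invoke that $\mathcal{F}$ maps $\Cal{S}$ onto $\Cal{S}$ to conclude $\dot\Delta_j K \in \Cal{S}(\R^2)$. The only real obstacle is step (ii), the regularity of $\wh K$ at nonzero frequencies, and that is dispatched immediately once $\wh K$ is identified as a degree-zero homogeneous, smooth-off-zero symbol.
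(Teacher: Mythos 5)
Your proposal is correct and follows essentially the same route as the paper: pass to the Fourier side, use that $\wh{K}(\xi)$ is a constant multiple of $\xi^\perp/\abs{\xi}$ (smooth away from the origin), note that $\wh{\varphi}_j$ is smooth and supported in an annulus avoiding $0$, so the product is in $C_c^\infty(\R^2)\subset\Cal{S}(\R^2)$, and conclude by the bijectivity of $\Cal{F}$ on $\Cal{S}(\R^2)$. The extra care you devote to justifying the formula for $\wh{K}$ is a reasonable elaboration of a step the paper states without proof, not a different argument.
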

\begin{proof}
    Taking the Fourier transform,
    \begin{align*}
        \Cal{F} (\dot{\Delta}_j K)(\xi)
            &= \Cal{F}(\varphi_j * K)(\xi)
            = \wh{\varphi}_j(\xi) \wh{K}(\xi)
            = -i\wh{\varphi}_j(\xi) \frac{\xi^\perp}{\abs{\xi}}.
    \end{align*}
    Because $\wh{\varphi}_j$ is in $C^\iny(\R^2)$ and supported in an annulus, $\Cal{F} (\dot{\Delta}_j K)$ belongs to $\Cal{S}(\R^2)$, and hence so does $\dot{\Delta}_j K$.
\end{proof}

\subsection{Convolution bounds}
We will use \cref{lem:operator-L1-estimates} below to bound the integrands appearing in our formulation of the mild solution to \cref{e:SQG}.  The proof of \cref{lem:operator-L1-estimates} relies on the following lemma from \cite{WCU}:

\begin{lemma}[\cite{WCU}]\label{Ward}
Fix $\epsilon \in [0,1)$ and an integer $N \ge 1$, and assume $f$ is a differentiable function on $\R^d$ which satisfies
\begin{itemize}
    \setlength{\itemsep}{0.35em} 
    \item[(1)]
        $|f(x)| \leq C(1+|x|)^{-d-N+\epsilon}$,
    \item[(2)]
        $|D^{\beta}f(x)|\leq C(1+|x|)^{-d -N -1 +\epsilon}$ for all $|\beta|=1$,
    \item[(3)]
        $\int x^{\beta} f(x) \, dx = 0$ for all $|\beta|<N$.
\end{itemize}
Then for each $i$, $1\leq i\leq d$, 
\begin{align*}
    \abs
        {\PV \int_{\R^2} K^i(x - y) f(y) \, dy}
            &\le C(1+|x|)^{-d-N +\epsilon + \delta}
\end{align*}
for every $\delta$ satisfying $0 < \delta < 1-\epsilon$. If, in addition, $f \in \Cal{S}(\R^2)$ then the same bound applies to $K * f$.
\end{lemma}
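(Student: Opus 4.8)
The plan is to discard the explicit formula for $K$ and use only its structural properties as a Calder\'on--Zygmund kernel: each component $K^i$ is smooth away from the origin, positively homogeneous of degree $-d$ (so that $|D^\beta K^i(z)| \lesssim |z|^{-d-|\beta|}$ for every multi-index $\beta$), and odd, whence $\int_{\rho < |z| < R} K^i(z)\,dz = 0$ for all $0 < \rho < R$. I would fix $x$ with $|x|$ large and split the $y$-integral into three regions: the near-origin region $\{|y| \le |x|/2\}$ where $f$ carries its mass, the singular region $\{|x-y| \le |x|/4\}$ around the diagonal, and the far region $\{|y| > |x|/2,\ |x-y| > |x|/4\}$. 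Each region is then estimated separately and the three bounds combined.

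On the near-origin region I would Taylor-expand $K^i(x-y)$ in $y$ about $y=0$ to order $N-1$. Since $|x-y| \ge |x|/2$ there, the expansion is legitimate and the order-$N$ remainder is controlled by $\sup |D^N K^i| \lesssim |x|^{-d-N}$. The polynomial terms produce $\sum_{|\beta|<N} c_\beta\, D^\beta K^i(x) \int_{|y|\le|x|/2} y^\beta f(y)\,dy$, and here the moment hypothesis (3) is decisive: because $\int_{\R^d} y^\beta f = 0$, each such integral equals minus its tail $\int_{|y|>|x|/2} y^\beta f$, which the decay hypothesis (1) bounds by $|x|^{|\beta|-N+\epsilon}$; multiplying by $|D^\beta K^i(x)| \lesssim |x|^{-d-|\beta|}$ yields $|x|^{-d-N+\epsilon}$ for every $\beta$. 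The remainder term is dominated by $|x|^{-d-N}\int_{|y|\le|x|/2}|y|^N(1+|y|)^{-d-N+\epsilon}\,dy \lesssim |x|^{-d-N+\epsilon}$, where a logarithm appears in the borderline case $\epsilon = 0$.

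On the singular region $\{|x-y|\le|x|/4\}$ one has $|y|\gtrsim|x|$, so I would subtract $f(x)$ for free: oddness gives $\PV \int_{|x-y|\le|x|/4} K^i(x-y) f(x)\,dy = 0$, while the gradient bound (2) and the mean value theorem give $|f(y)-f(x)|\lesssim|x-y|\,(1+|x|)^{-d-N-1+\epsilon}$ on the segment from $x$ to $y$. Since $\int_{|z|\le|x|/4}|z|^{-d}\,|z|\,dz \lesssim |x|$, this region contributes $\lesssim(1+|x|)^{-d-N+\epsilon}$; note the subtraction simultaneously removes the singularity, so the principal value reduces to an absolutely convergent integral. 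On the far region the crude estimate $|K^i(x-y)|\lesssim|x-y|^{-d}$ with (1), split into $|y|\sim|x|$ (a logarithm) and $|y|\gg|x|$ (clean), gives $\lesssim(1+|x|)^{-d-N+\epsilon}\log(2+|x|)$. Absorbing every logarithm via $\log(2+|x|)\lesssim_\delta(1+|x|)^\delta$ produces the stated rate $(1+|x|)^{-d-N+\epsilon+\delta}$; the restriction $\delta<1-\epsilon$ is precisely what keeps $\epsilon+\delta<1$, so the conclusion stays within the admissible parameter range (making the estimate iterable). For the last sentence, when $f\in\mathcal{S}(\R^2)$ the principal-value integral coincides with the ordinary action of the singular integral operator, so $\PV K^i * f = (K*f)^i$ and the same bound transfers verbatim.

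I expect the near-origin region to be the main obstacle. Without the moment conditions the leading Taylor coefficient $K^i(x)\int f$ would decay only like $|x|^{-d}$, far short of the target, so the entire gain comes from converting each moment $\int_{|y|\le|x|/2} y^\beta f$ into a tail integral via (3) and verifying that all $N$ polynomial terms \emph{and} the remainder collapse to the single rate $|x|^{-d-N+\epsilon}$. The singular region is routine once (2) is available, and the far region contributes only the logarithmic loss that $\delta$ absorbs; thus the careful bookkeeping of the cancellations near the origin is where the real work lies.
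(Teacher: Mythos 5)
Your proof is correct, but it takes a genuinely different route from the paper: the paper gives no argument at all for this lemma, deferring entirely to Theorem 3.2 of \cite{WCU}, whereas you reprove that external result from scratch. Your decomposition --- Taylor expansion of $K^i(x-\cdot)$ to order $N-1$ on $\{|y|\le|x|/2\}$ with the moment hypothesis (3) converting each truncated moment into a tail integral controlled by (1); subtraction of $f(x)$ together with the annular cancellation of the odd kernel on the diagonal region $\{|x-y|\le|x|/4\}$, where (2) both kills the singularity and supplies the decay; crude kernel decay on the remaining region, with every logarithm absorbed into $(1+|x|)^{\delta}$ --- is the standard Calder\'on--Zygmund argument and is essentially the content of the cited theorem. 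What the citation buys is brevity; what your argument buys is self-containedness and transparency: it shows that the conclusion uses only that $K$ is odd, smooth away from the origin, and homogeneous of degree $-d$ (not its explicit formula), it makes visible exactly where each hypothesis enters, and it explains the origin of the $\delta$-loss (logarithmic borderline integrals) and of the constraint $\delta<1-\epsilon$, which keeps the output in the same decay class as hypothesis (1). Two points to tighten: as written you only treat $|x|$ large, so you should add the routine observation that for bounded $|x|$ the principal value exists and is uniformly bounded (split at $|x-y|=1$, use oddness plus (2) near the singularity and $f\in L^1$ away from it); and in the final sentence you should state explicitly that for $f\in\mathcal{S}(\R^2)$ the convolution $K*f$ is not absolutely convergent and is by definition realized by the principal-value integral, which is precisely why the bound "transfers verbatim."
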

\begin{proof}
This follows from Theorem 3.2 of \cite{WCU}.
\end{proof}
We will utilize estimates on the fractional heat kernel to bound the mild solution to \cref{e:SQG}. In order to derive estimates on $\nabla G_\alpha$ as in \eqref{SQGintegral}, we first examine the derivatives of $g_\alpha$ at $t=1$.
\begin{lemma} \label{lem:L1-boundedness-of-frac-heat-kernel}
Let $\alpha>0$ then the $k^{th}$-order derivatives of the fractional heat kernel are in $L^1_x(\R^2)$ for all $k \in \N$. Specifically,
\begin{align*}
    \|\nabla^k g_{\alpha}(1,x) \|_{L^1_x} < \infty \quad \text{and} \quad   \|x\cdot\nabla^2 g_{\alpha}(1,x) \|_{L^1_x} < \infty.
\end{align*}
\begin{proof}
     Using the Bochner's Relation (see Corollary on page 72 of \cite{stein1970singular}), we have
 \begin{equation} \label{ln:derivative-dimension-relation}
     \frac{\partial}{\partial x_j} g_\alpha^d (1,x) = -\frac{x_j}{2\pi} g_\alpha ^{d+2} (1, \tilde{x}),
 \end{equation}
 where $g_\alpha^d(t,x)$ is the heat kernel in $d$ dimensions and $\tilde{x} = (x_1,x_2,\ldots, x_d, 0,0)$. We also take note of the well-known estimate (see \cite{bogdan2010heat}),
 \begin{equation} \label{ln:frac-g-pointwise-bound}
     g_\alpha^d (1,x) \lesssim \min \{1,|x|^{-d-2\alpha}\}.
 \end{equation}
 It then follows,
 \begin{equation}\label{e:gradgalBound}
 \begin{split}
     \left |\nabla^k g_\alpha (1,x) \right| \leq \frac{|x|^k}{2\pi} g_\alpha^{d+2k}(1,\tilde{x}) \lesssim |x|^k \min\{1, |x|^{-d-2k-2\alpha}\} = \min\{|x|^k, |x|^{-d-k-2\alpha}\},     
 \end{split}
 \end{equation}
 which is clearly integrable. By the same argument, the second estimate follows easily.
\end{proof}
\end{lemma}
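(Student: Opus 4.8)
The plan is to reduce both claimed bounds to a single pointwise decay estimate for the derivatives of $g_\alpha(1,\cdot)$ and then integrate. Concretely, I would aim to show that in $\R^2$
\[
    \abs{\nabla^k g_\alpha(1,x)} \lesssim \min\{|x|^k,\, |x|^{-2-k-2\alpha}\}
\]
for every $k \in \N$. Granting this, integrability over $\R^2$ follows from a near-origin/far-field split: for $|x| \le 1$ the integrand is controlled by $|x|^k$, which is integrable in two dimensions for every $k \ge 0$; for $|x| \ge 1$ it is controlled by $|x|^{-2-k-2\alpha}$, and $\int_{|x| \ge 1} |x|^{-2-k-2\alpha}\,dx$ converges precisely because $k + 2\alpha > 0$ whenever $\alpha > 0$.

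To produce the pointwise bound I would combine two classical facts about the fractional (stable) heat kernel. The first is the standard upper bound $g_\alpha^d(1,x) \lesssim \min\{1, |x|^{-d-2\alpha}\}$, valid in every dimension $d$. The second is a dimensional-shift (Bochner) identity expressing each first spatial derivative of the $d$-dimensional kernel as $-\tfrac{x_j}{2\pi}$ times the $(d+2)$-dimensional kernel evaluated at the same radius (the argument is $x$ padded with zeros, so its modulus is unchanged). Iterating the identity $k$ times converts $\nabla^k g_\alpha^d$ into a factor $|x|^k$ multiplying $g_\alpha^{d+2k}$ at radius $|x|$; feeding this into the pointwise bound in dimension $d+2k$ then gives
\[
    \abs{\nabla^k g_\alpha^d(1,x)} \lesssim |x|^k \min\{1, |x|^{-d-2k-2\alpha}\} = \min\{|x|^k,\, |x|^{-d-k-2\alpha}\},
\]
which specializes at $d=2$ to the displayed bound above.

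For the weighted second derivative I would not need a new argument: taking $k=2$, $d=2$ and multiplying by $|x|$ yields
\[
    \abs{x \cdot \nabla^2 g_\alpha(1,x)} \le |x|\,\abs{\nabla^2 g_\alpha(1,x)} \lesssim \min\{|x|^3,\, |x|^{-3-2\alpha}\},
\]
which is integrable on $\R^2$ by the same splitting, since $3 + 2\alpha > 2$. The substantive content is therefore just the two inputs—the stable-kernel pointwise bound and the Bochner dimensional-shift relation—both of which are standard; once they are invoked the conclusion is a one-line integrability check. The only point demanding care is the bookkeeping of exponents through the dimensional shift, so that the far-field decay emerges as $|x|^{-d-k-2\alpha}$, strong enough to beat the $|x|^k$ prefactor, rather than something weaker.
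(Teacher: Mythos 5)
Your proposal is correct and follows essentially the same route as the paper: Bochner's dimensional-shift relation iterated to trade derivatives for powers of $|x|$ and a jump to dimension $d+2k$, the standard stable-kernel bound $g_\alpha^{d}(1,x) \lesssim \min\{1,|x|^{-d-2\alpha}\}$, and a near-origin/far-field integrability check, with the weighted bound $|x\cdot\nabla^2 g_\alpha(1,x)|$ handled by multiplying the $k=2$ estimate by $|x|$. The only shared imprecision (present in the paper as well) is that iterating the Bochner identity produces additional product-rule terms for $k\ge 2$, but these carry fewer powers of $x$ against kernels in lower shifted dimensions and decay at least as fast, so the integrability conclusion is unaffected.
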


We are now in a position to show that the $L^1_x$-norms of spatial derivatives of $g_\alpha$ decay in time, and that convolution with $K$ preserves this decay.
\begin{lemma} \label{lem:operator-L1-estimates}
	For $\alpha>0$ and $\beta$ a multi-index with $k:=|\beta|$ and $j = 1, 2$, we have
	\begin{align*}
		\begin{cases}
			\norm{D^\beta g_\alpha(t)}_{L^1}
				\le C(k,\nu,\alpha ) t^{-k/(2\alpha)}
				&\text{ for all } \beta, \\
			\norm{K^j * D^\beta g_\alpha(t)}_{L^1}
				\le C(k,\nu, \alpha) t^{-k/(2\alpha)}
				&\text{ for all } k \text{ odd}.
		\end{cases}
	\end{align*}
\end{lemma}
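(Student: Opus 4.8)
The plan is to reduce both estimates to the fixed time $t=1$ by exploiting the dilation relation \eqref{ln:dilation-relation-frac-g}, and then to invoke \cref{lem:L1-boundedness-of-frac-heat-kernel} for the first bound and \cref{Ward} for the second. Writing $\lambda := (\nu t)^{1/(2\alpha)}$, the relation \eqref{ln:dilation-relation-frac-g} reads $g_\alpha(t,x) = \lambda^{-2}g_\alpha(1,x/\lambda)$ in dimension $d=2$, so differentiating $k=|\beta|$ times in $x$ gives $D^\beta g_\alpha(t,x) = \lambda^{-2-k}(D^\beta g_\alpha)(1,x/\lambda)$. A change of variables $y=x/\lambda$ in the $L^1$ integral then yields $\norm{D^\beta g_\alpha(t)}_{L^1} = \lambda^{-k}\norm{D^\beta g_\alpha(1)}_{L^1} = (\nu t)^{-k/(2\alpha)}\norm{D^\beta g_\alpha(1)}_{L^1}$, and since $\norm{D^\beta g_\alpha(1)}_{L^1}<\infty$ by \cref{lem:L1-boundedness-of-frac-heat-kernel}, the first estimate follows with $C(k,\nu,\alpha)=\nu^{-k/(2\alpha)}\norm{D^\beta g_\alpha(1)}_{L^1}$.

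For the second estimate I would first check that the same scaling survives convolution with $K^j$, using that $K$ is homogeneous of degree $-d=-2$. Substituting $z=\lambda w$ and using $K^j(x-\lambda w)=\lambda^{-2}K^j(x/\lambda-w)$ gives $(K^j*D^\beta g_\alpha(t))(x)=\lambda^{-k-2}(K^j*D^\beta g_\alpha(1))(x/\lambda)$, whence $\norm{K^j*D^\beta g_\alpha(t)}_{L^1} = (\nu t)^{-k/(2\alpha)}\norm{K^j*D^\beta g_\alpha(1)}_{L^1}$. It therefore remains only to prove the time-one bound $\norm{K^j*D^\beta g_\alpha(1)}_{L^1}<\infty$ for $k$ odd.

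This is where \cref{Ward} enters, applied to $f=D^\beta g_\alpha(1)$ with $N=k$ and $\epsilon=0$. Hypotheses (1) and (2) are immediate from the pointwise derivative bound \eqref{e:gradgalBound}: in $d=2$ one has $|D^\beta g_\alpha(1,x)|\lesssim \min\{|x|^k,|x|^{-2-k-2\alpha}\}$ and $|D^{\beta+\gamma}g_\alpha(1,x)|\lesssim |x|^{-3-k-2\alpha}$ for $|\gamma|=1$ and large $|x|$, both of which decay faster than the required $(1+|x|)^{-2-k}$ and $(1+|x|)^{-3-k}$ because $\alpha>0$. Hypothesis (3), the vanishing of $\int x^\gamma f\,dx$ for $|\gamma|<k$, follows from the derivative structure: integrating by parts, $\int x^\gamma D^\beta g_\alpha(1,x)\,dx = (-1)^k\int (D^\beta x^\gamma)g_\alpha(1,x)\,dx = 0$ whenever $|\gamma|<|\beta|=k$, since then $D^\beta x^\gamma\equiv 0$. \cref{Ward} then gives that the principal-value integral satisfies $|(K^j*D^\beta g_\alpha(1))(x)| \le C(1+|x|)^{-2-k+\delta}$ for any $0<\delta<1$; taking $\delta=1/2$ produces the exponent $-2-k+\tfrac12$, which is strictly below $-2$ precisely because $k\ge 1$, so the right-hand side is integrable on $\R^2$ and the time-one bound follows.

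The routine parts are the two scaling computations and the pointwise decay checks. The step requiring genuine care is the application of \cref{Ward}: I must confirm that enough moments of $f$ vanish for the resulting decay exponent $-d-N+\epsilon+\delta$ to fall below $-d=-2$. The derivative structure of $f=D^\beta g_\alpha(1)$ supplies exactly the moments up to order $k-1$, giving $N=k$, and the hypothesis that $k$ is odd guarantees $k\ge 1$, which is what makes the final exponent integrable; for $k=0$ the multiplier $\xi^\perp_j/|\xi|$ is merely bounded (a Riesz transform) and no decay is gained, so the restriction $k\ge 1$ is essential. I would also note that since $D^\beta g_\alpha(1)$ is smooth with the above decay, the principal-value integral coincides with the convolution $K^j*D^\beta g_\alpha(1)$, so no Schwartz hypothesis on $f$ is required.
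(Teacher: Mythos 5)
Your proposal is correct, and its skeleton is the same as the paper's: reduce to $t=1$ by the dilation relation (using the homogeneity $K(z)=b^2K(bz)$ to push the scaling through the convolution), then control the time-one quantity via \cref{Ward}. The one genuine difference is how you verify the hypotheses of \cref{Ward}. The paper applies it with $N=1$: for odd $k$, the radial symmetry of $g_\alpha(1)$ makes $D^\beta g_\alpha(1)$ odd, so its mean vanishes, which is the single moment condition needed, and the resulting decay $(1+|x|)^{-3+\epsilon+\delta}$ is already integrable. You instead apply it with $N=k$ and $\epsilon=0$, producing the moments $\int x^\gamma D^\beta g_\alpha(1)\,dx=0$ for all $|\gamma|<k$ by integration by parts, since $D^\beta x^\gamma\equiv 0$ when $|\gamma|<|\beta|$ (the boundary terms and absolute convergence are justified by the pointwise decay \eqref{e:gradgalBound}, which you correctly invoke for hypotheses (1) and (2) as well). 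This buys something concrete: your argument never uses oddness of $k$, only $k\ge 1$, so it proves the second estimate for \emph{all} $k\ge 1$ in one stroke --- exactly the strengthening that the paper records in a separate remark and obtains there by the different route through \cref{C:g1KInBesov}. Your closing observation is also sound: since $K^j$ is an odd Calder\'on--Zygmund kernel and $D^\beta g_\alpha(1)$ is smooth with polynomial decay, the principal-value integral is the natural (and, in the paper's notational convention, intended) meaning of $K^j * D^\beta g_\alpha(1)$, so the Schwartz proviso in \cref{Ward} is not needed.
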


\begin{proof} 
We will use the dilation relation \eqref{ln:dilation-relation-frac-g} to manipulate the norm of $D^\beta g_\alpha(t)$ and apply the chain rule (see also Lemma 6 of \cite{schonbek_qg_farfield} for an alternate proof).  We write
\begin{equation*}
\begin{split}
    &\norm{D^\beta g_\alpha(t)}_{L^1_x}
        = \int_{\R^2} \left | D^\beta g_\alpha(t,x) \right| dx
        = \int_{\R^2} \left | D^\beta \left(t^{-1/\alpha} g_\alpha\left(1, x t^{-\frac{1}{2\alpha}}\right)\right) \right|\, dx \\
        &\qquad\qquad = \int_{\R^2}  \left | t^{-\left(\frac{1}{\alpha} +\frac{k}{2\alpha}\right)} (D^\beta g_\alpha)(1,xt^{-\frac{1}{2\alpha}}) \right| \, dx.
\end{split}
\end{equation*}
Finally, the substitution $u = xt^{-\frac{1}{2\alpha}}$ yields
\begin{align*}
    \norm{D^\beta g_\alpha(t)}_{L^1_x} = t^{-k/(2\alpha)} \|D^\beta g_\alpha(1) \|_{L^1_x}.
\end{align*}
     Invoking \cref{lem:L1-boundedness-of-frac-heat-kernel}, we have $D^\beta g_\alpha(1)\in L^1_x(\R^2)$. To prove the second bound in Lemma \ref{lem:operator-L1-estimates}, we use the dilation relation \eqref{ln:dilation-relation-frac-g} to write
    \begin{equation}\label{ln:K-D-beta-frac-g}
    \begin{split}
                K^j * D^\beta g_\alpha(t, x)
            &= K^j * D^\beta \left(t^{-1/\alpha}g_\alpha(1,xt^{-\frac{1}{2\alpha}})\right) \\
            &= t^{-\left( \frac{1}{\alpha} + \frac{k}{2\alpha}\right)}
                \PV\int_{\R^2} K^j(z) (D^\beta g_\alpha(1))
                    \pr{t^{-\frac{1}{2\alpha}}(x - z)} \, dz\\
                    &= t^{-\left( \frac{1}{\alpha} + \frac{k}{2\alpha}\right)}
                (K^j * D^\beta g_{\alpha}(1)) \pr{xt^{-\frac{1}{2\alpha}}}.
    \end{split}
    \end{equation}

    To get the last equality, we used the scaling property $K(z) = b^2 K(bz)$ for any $b > 0$ and the integral substitution $u = t^{-\frac{1}{2\alpha}}z$.
    Applying the substitution $u = t^{-\frac{1}{2\alpha}}x$ again then gives
    \begin{align}\label{ln:norm-k-D-beta-frac-g}
        \norm{K^j * D^\beta g_\alpha(t)}_{L^1_x}
            &= t^{-k/(2\alpha)} \norm{K^j * D^\beta g_\alpha(1)}_{L^1_x}.
    \end{align}

    By symmetry of $g_\alpha$ and the fact that symmetry is preserved under the Fourier transform, when $k = |\beta| $ is odd, we see that $D^\beta g_\al(1)$ satisfies the hypotheses of \cref{Ward}, including the critical moment condition in (3) for $N=1$, i.e.,
    \begin{equation}
        \int_{\R^2} D^\beta g_\alpha(1) \, dx = 0.
    \end{equation}
    The bound for $K^j * D^\beta g_\alpha(t)$ follows.
\end{proof}
\begin{remark} \label{rmk:abs-nabla-frac-g}
    Notice that for $\Lambda g_\alpha(t,x)$, the above argument holds to produce the bound
    \begin{equation*}
        \left \|\Lambda g_\alpha(t,x) \right \|_{L^1_x} \leq C(\nu,\alpha) t^{-\frac{1}{2\alpha}}.
    \end{equation*}
\end{remark}

\begin{remark}
    In fact, the estimate \eqref{lem:operator-L1-estimates} holds for $K^j \ast D^\beta g_\alpha(t)$ for even $k:=|\beta|$ as a result of \cref{C:g1KInBesov}. 
\end{remark}
When invoking the fractional heat equation property \eqref{ln:frac-heat-property}, it will also be necessary to have estimates available for the fractional Laplacian applied to $g_\alpha$.
\begin{lemma} \label{lem:lambda-nabla-frac-g-estimate} For $\alpha>0$, there exists a constant $C(\nu,\alpha)>0$ such that
	\begin{align*}
		\begin{cases}
			\|\Lambda^{2\alpha} \nabla g_\alpha(t)  \|_{L^1_x} \leq  C(\nu, \alpha) t^{-\left (1+ \frac{1}{2\alpha}\right )}, \\
			\|K\ast \Lambda^{2\alpha} \nabla g_\alpha(t)  \|_{L^1_x} \leq  C(\nu, \alpha) t^{-\left (1+ \frac{1}{2\alpha}\right )}.
		\end{cases}
	\end{align*}
\begin{proof}
    First, we devise a dilation law for the tensor of $k^\text{th}$-derivatives. Observe for $k\in \N$,
    \begin{equation} \label{ln:dilation-relation-k-deriv-frac-g}
    \begin{split}
        &\nabla^k g_\alpha(t,x) = \nabla^k \left [t^{-1/\alpha} g_\alpha (1,xt^{-\frac{1}{2\alpha}}) \right]
        = t^{-1/\alpha}\nabla^k g_\alpha (1,xt^{-\frac{1}{2\alpha}}) \\
        &\qquad\qquad = t^{-\frac{2+k}{2\alpha}}(\nabla^k g_\alpha) (1,xt^{-\frac{1}{2\alpha}}).
    \end{split}
    \end{equation}
    We evaluate the norm of $\Lambda^{2\alpha} \nabla g_\alpha(t)$ using the fractional heat property \eqref{ln:frac-heat-property} and the dilation relation \eqref{ln:dilation-relation-k-deriv-frac-g} for $k=1$.  We write
    \begin{equation} \label{ln:lambda-2-alpha-frac-g}
    \begin{split}
        &\Lambda^{2\alpha} \nabla g_\alpha(t,x) = -\frac{1}{\nu} \frac{\partial}{\partial t} \nabla g_\alpha(t,x) = -\frac{1}{\nu} \frac{\partial}{\partial t} \left[ t^{-\frac{3}{2\alpha}} (\nabla g_\alpha)(1, x t^{-\frac{1}{2\alpha}}) \right] \\
        &= -\frac{1}{\nu} \left[ -\frac{3}{2\alpha} t^{-\left(\frac{3}{2\alpha}+1\right)} (\nabla g_\alpha)(1, x t^{-\frac{1}{2\alpha}}) 
        - \frac{1}{2\alpha} t^{-\left(\frac{3}{2\alpha}+\frac{1}{2\alpha}+1\right)} x \cdot (\nabla^2 g_\alpha)(1, x t^{-\frac{1}{2\alpha}}) \right].
    \end{split}
\end{equation}
    We use (\ref{ln:lambda-2-alpha-frac-g}) to compute the $L^1_x$-norm of $\Lambda^{2\alpha} \nabla g_\alpha(t)$, giving
    \begin{equation} \label{ln:int-lambda-2-alpha-frac-g}
        \begin{split}
            \int_{\R^2} &\left |\Lambda^{2\alpha} \nabla g_\alpha(t,x) \right| \, dx \\
            &=  \frac{1}{\nu}\int_{\R^2} \left |
                -\frac{3 t^{-\left(\frac{3}{2\alpha}+1\right)}}{2\alpha}  \nabla g_\alpha(1,xt^{-\frac{1}{2\alpha}})
                -\frac{t^{-\left(\frac{3}{2\alpha}+\frac{1}{2\alpha}+1\right)}}{2\alpha}x\cdot(\nabla^2 g_\alpha)(1,xt^{-\frac{1}{2\alpha}}) \right | dx \\
            &\leq \frac{3}{2\alpha\nu}t^{-1}\int_{\R^2} \left | \nabla g_\alpha(t,x) \right | dx + \frac{1}{2\alpha \nu} t^{-\left(\frac{3}{2\alpha}+\frac{1}{2\alpha}+1\right)}\int_{\R^2} \left| x\cdot (\nabla^2 g_\alpha)(1,xt^{-\frac{1}{2\alpha}})\right | \,dx.
        \end{split}
    \end{equation}
    We apply the integral substitution $u = xt^{-\frac{1}{2\alpha}}$ to the second term of \eqref{ln:int-lambda-2-alpha-frac-g}, which yields 
    \begin{equation} \label{ln:lambda-nabla-g-alpha-second-term}
        \begin{split}    
           &\frac{1}{2\alpha \nu} t^{-\left(\frac{3}{2\alpha}+\frac{1}{2\alpha}+1\right)}\int_{\R^2} \left | x\cdot (\nabla^2 g_\alpha)(1,xt^{-\frac{1}{2\alpha}})\right| \,dx  \\
           &\qquad\qquad =\frac{1}{2\alpha \nu } t^{-\left (\frac{1}{2\alpha} +1\right )}\int_{\R^2}\left | x\cdot (\nabla^2 g_\alpha)(1,x)\right |\,dx.
        \end{split}
    \end{equation}
   The integral in \eqref{ln:lambda-nabla-g-alpha-second-term} is bounded by \cref{lem:L1-boundedness-of-frac-heat-kernel}.  Using \cref{lem:operator-L1-estimates}, we can further simplify the first term of \eqref{ln:lambda-2-alpha-frac-g}. Indeed,
    \begin{equation} \label{ln:lambda-nabla-g-alpha-first-term}
        \frac{3}{2\alpha\nu}t^{-1}\int_{\R^2} \left | \nabla g_\alpha(t,x) \right | dx \leq \frac{3}{2\alpha\nu} C  t^{-\left (\frac{1}{2\alpha} +1\right )}.
    \end{equation}
    One can then combine \eqref{ln:lambda-nabla-g-alpha-second-term} and \eqref{ln:lambda-nabla-g-alpha-first-term} to conclude that
    \begin{equation*}
        \int_{\R^2} \left |\Lambda^{2\alpha} \nabla g_\alpha(t,x) \right| dx \leq C(\nu,\alpha) t^{-\left (1+\frac{1}{2\alpha}\right )}.
    \end{equation*}
    We now prove the second inequality of Lemma \ref{lem:lambda-nabla-frac-g-estimate}. For $j=1,2$, we expand $K^j\ast \Lambda^{2\alpha} \nabla g_\alpha(t)$ using \eqref{ln:dilation-relation-frac-g}.  We write
    \begin{equation*}
    \begin{split}
        K^j \ast \Lambda^{2\alpha} \nabla g_\alpha (t,x)
        &= -\frac{1}{\nu} K^j \ast\bigg[ -\frac{3}{2\alpha} t^{-\left(\frac{3}{2\alpha}+1\right)} \nabla g_\alpha(1, x t^{-\frac{1}{2\alpha}}) 
        \\
        &\qquad- \frac{1}{2\alpha} t^{-\left(\frac{3}{2\alpha}+\frac{1}{2\alpha}+1\right)} x \cdot (\nabla^2 g_\alpha)(1, x t^{-\frac{1}{2\alpha}}) \bigg].
    \end{split}    
    \end{equation*}
    Therefore,
    \begin{equation} \label{ln:K-lambda-nabla-frac-g}
    \begin{split}
        \|K^j \ast \Lambda^{2\alpha} \nabla g_\alpha (t,x) \|_{L^1_x} &\leq \frac{3}{2\alpha \nu} t^{-\left(\frac{3}{2\alpha}+1\right)} \bigg \|\underbrace{K^j \ast  \nabla g_\alpha(1, x t^{-\frac{1}{2\alpha}}) }_{=:[I]} \bigg\|_{L^1_x} \\ 
        &+ \frac{1}{2\alpha\nu} t^{-\left(\frac{3}{2\alpha}+\frac{1}{2\alpha}+1\right)} \bigg \| \underbrace{K^j \ast \left(x \cdot (\nabla^2 g_\alpha)(1, x t^{-\frac{1}{2\alpha}})\right)}_{=:[II]} \bigg\|_{L^1_x}.
    \end{split}
    \end{equation}
    For the first integral of \eqref{ln:K-lambda-nabla-frac-g}, using $K(x) = b^2K(bx)$ for $b>0$ and the integral substitution $u =zt^{-\frac{1}{2\alpha}}$, we have
    \begin{equation*}
    \begin{split}
        [I]  &= \PV \int_{\R^2} K^j(z) \nabla g_\alpha\left(1,(x-z)t^{-\frac{1}{2\alpha}}\right) dz\\
        &\qquad = t^{-1/\alpha} \PV \int_{\R^2}  K^j(zt^{-1/(\alpha)}) \nabla g_\alpha\left(1,(x-z)t^{-\frac{1}{2\alpha}}\right)dz \\
        & \qquad = \PV \int_{\R^2}  K^j(z) \nabla g_\alpha\left(1,xt^{-\frac{1}{2\alpha}}-z\right)dz \\
        & \qquad = \left(K^j \ast \nabla g_\alpha(1)\right)(xt^{-\frac{1}{2\alpha}})
    \end{split}
    \end{equation*}
     Using the integral substitution $u= xt^{-\frac{1}{2\alpha}}$, we arrive at
    \begin{equation}\label{ln:K-nabla-frac-g-first-term}
    \begin{split}
        \frac{3}{2\alpha \nu} t^{-\left(\frac{3}{2\alpha}+1\right)} \left \|[I]  \right\|_{L^1_x} &= \frac{3}{2\alpha \nu} t^{-\left(\frac{3}{2\alpha}+1\right)}\left\|\left(K^j \ast \nabla g_\alpha(1)\right)(xt^{-\frac{1}{2\alpha}})\right\|_{L^1_x} \\
        &\leq \frac{3}{2\alpha \nu}t^{-\left( \frac{1}{2\alpha}+1\right)}  \left\| K\ast \nabla g_\alpha(1,x)\right\|_{L^1_x}.
    \end{split}
    \end{equation}
    By \cref{lem:operator-L1-estimates}, $\|[I](x)\|_{L^1_x}$ is bounded.
    For $[II]$,
    as in the proof of \cref{lem:operator-L1-estimates}, we use the scaling property $K(x) = b^2 K(bx)$ for $b=t^{-\frac{1}{2\alpha}}$ and the integral substitution $u = zt^{-\frac{1}{2\alpha}}$ to write
    \begin{equation*}
    \begin{split}
        [II] &= \PV \int_{\R^2} K^j (z) \left[(x-z)\cdot(\nabla^2 g_\alpha) \left(1,(x-z)t^{-\frac{1}{2\alpha}} \right) \right] dz \\
        &= t^{-\frac{1}{2\alpha}} \PV\int_{\R^2} K^j (t^{-\frac{1}{2\alpha}}) \left[ t^{-\frac{1}{2\alpha}} \cdot (\nabla^2 g_\alpha) \left( 1,(x-z)t^{-\frac{1}{2\alpha}} \right)\right] dz \\
        & = t^{\frac{1}{2\alpha}} \PV\int_{\R^2} K^j(z)\left[ (xt^{-\frac{1}{2\alpha}} - z) \cdot (\nabla^2 g_\alpha)\left( 1,(xt^{-\frac{1}{2\alpha}}-z) \right) \right] dz \\
        & = t^{\frac{1}{2\alpha}} K^j \ast \left(x\cdot\nabla^2 g_\alpha(1)\right)(t^{-\frac{1}{2\alpha}}x).
    \end{split}
    \end{equation*}
    Observe that $x\cdot (\nabla^2 g_\alpha)$ is an odd function via a standard symmetric argument involving the Fourier transform, and thus satisfies the conditions of \cref{Ward}. Hence,
    \begin{equation}\label{ln:K-x-nabla-2-frac-g}
    \begin{split}
     \| [II]\|_{L^1_x} &= t^{\frac{1}{2\alpha}}\left \|  K^j \ast \left(x\cdot\nabla^2 g_\alpha(1)\right)(t^{-\frac{1}{2\alpha}}x)  \right \|_{L^1_x}  \\
     & = t^{\frac{3}{2\alpha}} \left \|  K^j \ast \left(x\cdot\nabla^2 g_\alpha(1)\right)(x)  \right \|_{L^1_x} <\infty.
    \end{split}
    \end{equation}  In the final line above, we use the $L^1_x$ boundedness of $K^j \ast \left(x\cdot\nabla^2 g_\alpha(1)\right)(x)$ from \cref{Ward}. 
    Thus, we can substitute the terms \eqref{ln:K-x-nabla-2-frac-g} and \eqref{ln:K-nabla-frac-g-first-term} into \eqref{ln:K-lambda-nabla-frac-g}. We conclude
    \begin{align*}
        \|K^j \ast \Lambda^{2\alpha} \nabla g_\alpha (t,x) \|_{L^1_x} &\leq C(\alpha,\nu) t^{-\left(1+ \frac{1}{2\alpha}\right)} \left ( \|[I]\|_{L^1_x} + \|[II]\|_{L^1_x}\right) <\infty,
    \end{align*}
    as desired.
\end{proof}
\end{lemma}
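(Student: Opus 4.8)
The plan is to exploit two structural facts about the fractional heat kernel: the evolution identity $\Lambda^{2\alpha} g_\alpha = -\tfrac{1}{\nu}\partial_t g_\alpha$ coming from \eqref{ln:frac-heat-property}, and the dilation relation \eqref{ln:dilation-relation-frac-g}. The first lets me trade the nonlocal operator $\Lambda^{2\alpha}$, which is awkward to estimate directly, for a time derivative; the second lets me reduce every $L^1_x$ norm at time $t$ to the corresponding norm at the fixed time $t=1$, which is already known to be finite by \cref{lem:L1-boundedness-of-frac-heat-kernel}. After these reductions all of the $t$-dependence is carried by explicit powers of $t$, and assembling them should yield the rate $t^{-(1+1/(2\alpha))}$.

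For the first bound, I would write $\Lambda^{2\alpha}\nabla g_\alpha(t) = -\tfrac{1}{\nu}\partial_t \nabla g_\alpha(t)$ and insert the once-differentiated dilation relation $\nabla g_\alpha(t,x) = t^{-3/(2\alpha)}(\nabla g_\alpha)(1, x t^{-1/(2\alpha)})$ (absorbing $\nu$ into the constants). Applying $\partial_t$ and the chain rule produces exactly two terms: one in which the prefactor power of $t$ is differentiated, proportional to $t^{-(3/(2\alpha)+1)}(\nabla g_\alpha)(1, xt^{-1/(2\alpha)})$, and one in which the rescaled argument is differentiated, proportional to $t^{-(3/(2\alpha)+1/(2\alpha)+1)}\, x\cdot(\nabla^2 g_\alpha)(1, xt^{-1/(2\alpha)})$. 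Taking the $L^1_x$ norm of each and changing variables $u = x t^{-1/(2\alpha)}$ turns each term into a time-independent integral times a power of $t$; the first integral is $\|\nabla g_\alpha(1)\|_{L^1_x}$ and the second is $\|x\cdot\nabla^2 g_\alpha(1)\|_{L^1_x}$, both finite by \cref{lem:L1-boundedness-of-frac-heat-kernel}, and collecting the exponents of $t$ gives the claimed rate.

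For the second bound I would repeat the same differentiation, but convolve each of the two terms with $K^j$ before taking the $L^1_x$ norm. The crucial observation is that $K$ is $(-2)$-homogeneous, $K(z) = b^2 K(bz)$ for $b>0$, so convolution with $K$ commutes with the dilation $x\mapsto xt^{-1/(2\alpha)}$ up to an explicit power of $t$; after a substitution this reduces $K^j*\nabla g_\alpha(t,\cdot)$ and $K^j*(x\cdot\nabla^2 g_\alpha)(t,\cdot)$ to the rescaled profiles $(K^j*\nabla g_\alpha(1))(xt^{-1/(2\alpha)})$ and $(K^j*(x\cdot\nabla^2 g_\alpha(1)))(xt^{-1/(2\alpha)})$. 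It then remains to show the two profiles $K^j*\nabla g_\alpha(1)$ and $K^j*(x\cdot\nabla^2 g_\alpha(1))$ lie in $L^1(\R^2)$, which is where \cref{Ward} enters: both $\nabla g_\alpha(1)$, an odd first derivative, and $x\cdot\nabla^2 g_\alpha(1)$ are odd functions with sufficient pointwise decay, hence satisfy the moment condition $\int f = 0$ with $N=1$ together with the decay hypotheses, so \cref{Ward} gives the required $L^1$ integrability of their principal value convolutions with $K$. A final change of variables reassembles the powers of $t$ into $t^{-(1+1/(2\alpha))}$.

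The main obstacle is bookkeeping: correctly propagating the interlocking scaling factors through the time derivative, the chain rule, and the $(-2)$-homogeneity of $K$, so that after the substitutions everything collapses to fixed $t=1$ integrals. The one genuinely non-mechanical point is verifying the hypotheses of \cref{Ward} for $x\cdot\nabla^2 g_\alpha(1)$ — establishing its oddness, via the symmetry of $\wh{g}_\alpha$ under the Fourier transform, and its decay — so that the singular convolution $K^j*(x\cdot\nabla^2 g_\alpha(1))$ is controlled in $L^1$; the finiteness of $\|x\cdot\nabla^2 g_\alpha(1)\|_{L^1}$ itself is supplied by \cref{lem:L1-boundedness-of-frac-heat-kernel}.
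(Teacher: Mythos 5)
Your proposal is correct and follows essentially the same route as the paper's proof: trading $\Lambda^{2\alpha}$ for $-\tfrac{1}{\nu}\partial_t$ via \eqref{ln:frac-heat-property}, differentiating the dilated kernel to get the two terms $t^{-(3/(2\alpha)+1)}(\nabla g_\alpha)(1,xt^{-1/(2\alpha)})$ and $t^{-(3/(2\alpha)+1/(2\alpha)+1)}\,x\cdot(\nabla^2 g_\alpha)(1,xt^{-1/(2\alpha)})$, rescaling to fixed time $t=1$, and then using the $(-2)$-homogeneity of $K$ together with \cref{Ward} (oddness giving the $N=1$ moment condition, plus the pointwise decay of the kernel derivatives) to handle the convolutions $K^j*\nabla g_\alpha(1)$ and $K^j*(x\cdot\nabla^2 g_\alpha(1))$. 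The only cosmetic difference is that the paper invokes \cref{lem:operator-L1-estimates} for the term $\norm{\nabla g_\alpha(t)}_{L^1_x}$ and for $K^j*\nabla g_\alpha(1)$ rather than redoing the substitution and Ward argument inline, but that lemma is proved by exactly the computation you describe.
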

Having proven that the expression $K\ast\nabla G_\alpha$ is well-defined, we are now in a position to clarify the interpretation of the mild formulation of a solution. Recalling \cref{SQGintegral}, we defined a mild solution as a pair of functions $(\theta,u)$ which satisfy the coupled equations:
\begin{equation}\label{SQGintegral-repeat}
\begin{split}
    \theta(t,x)
        &= G_\alpha (t)\theta_0(x)-\int_0^t \nabla G_\alpha (t-s) \cdot (\theta u)(s, x) \, ds, \\
    u(t,x)
        &= G_\alpha(t)u_0(x)  - \int_0^t (K \ast\nabla G_\alpha(t-s)) \cdot (\theta u)(s, x) \, ds.
\end{split}
\end{equation}
\begin{definition}\label{D:KGpsi}
    We interpret the integrand in \cref{SQGintegral-repeat}$_2$ as follows. For any $t > 0$ and vector function $p:\R^2 \to \R^2$, define
    \begin{align*}
        (K \ast\nabla G_\alpha(t)) \cdot p
            &:= (K * \grad g_\alpha(t)) \stardot p,
    \end{align*}
    where $K * \grad g_\alpha(t)$ is the $2$-tensor, or $2 \times 2$ matrix,
    \begin{align} \label{def:K-ast-nabla-g-2-tensor}
        \begin{bmatrix}
            K^1 \ast \partial_{x_1} g_\alpha(t)
            & K^2 \ast \partial_{x_1} g_\alpha(t) \\
            K^1 \ast \partial_{x_2} g_\alpha(t)
            & K^2 \ast \partial_{x_2} g_\alpha(t)
        \end{bmatrix},
    \end{align}
    and the $\stardot$ product is given by
    $$ (K * \grad g_\alpha(t)) \stardot p(s) = \begin{bmatrix}
            K^1 \ast \partial_{x_1} g_\alpha(t) \ast p_1 + K^2 \ast \partial_{x_1} g_\alpha(t)\ast p_2 \\
            K^1 \ast \partial_{x_2} g_\alpha(t)\ast p_1+  K^2 \ast \partial_{x_2} g_\alpha(t)\ast p_2
        \end{bmatrix}.$$
    Each component of the matrix in \eqref{def:K-ast-nabla-g-2-tensor}, by \cref{lem:operator-L1-estimates}, lies in $L^1(\R^2)$. For any $\varphi \in (L^\iny(\R^2))^2$, we then define $(K * \grad g_\alpha(t)) \stardot \varphi(x)$ as the convolution of an $L^1$ matrix-field with an $L^\iny$-vector field, resulting in an $L^\iny$-vector field, whose $i^{th}$ component, $i = 1, 2$, is given by $ \sum_{j=1,2}  (K^j *\prt_i g_\alpha(t)) * \varphi_j$.
\end{definition}

It will be necessary to compute the time derivative of $K\ast \nabla G_\alpha(t)$ to prove properties about the time-regularity of solutions to \eqref{e:SQG}, as we do in \cref{L:K-heat-equation-property}.
\begin{lemma}\label{L:K-heat-equation-property}
Let $\alpha>0$. $K$ and $\partial_t$ commute, in the sense that, for all $t>0$,
    \begin{align*}
        \frac{\partial}{\partial t} \left(K * \nabla g_\alpha(t)\right)
            &= -\nu K\ast \Lambda^{2\alpha} \nabla g_\alpha (t) \in L^1(\R^2).
    \end{align*}
\end{lemma}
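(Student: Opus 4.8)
The plan is to reduce the claim to justifying differentiation under the principal value convolution with $K$; once that interchange is in hand, both the identity and the $L^1$ membership follow from material already established. First I would record the pointwise relation $\partial_t \nabla g_\alpha(s,x) = -\nu \Lambda^{2\alpha}\nabla g_\alpha(s,x)$ for $s>0$, which comes from differentiating \cref{ln:frac-heat-property} in $t$ and commuting the Fourier multipliers $\nabla$ and $\Lambda^{2\alpha}$ with $\partial_t$. With this, the asserted formula is exactly the statement that $\partial_t\bigl(K*\nabla g_\alpha(t)\bigr) = K*\partial_t\nabla g_\alpha(t)$, and the fact that the right-hand side lies in $L^1(\R^2)$ is precisely the second bound of \cref{lem:lambda-nabla-frac-g-estimate}. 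So the entire content of the lemma is the commutation of $\partial_t$ with $K*$.

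To prove the commutation I would argue with difference quotients. Fix $t>0$ and $h$ small. By linearity of the principal value convolution the difference quotient is $K*\bigl[h^{-1}(\nabla g_\alpha(t+h)-\nabla g_\alpha(t))\bigr]$, and by the fundamental theorem of calculus in time together with the relation above, the bracketed function equals $-\tfrac{\nu}{h}\int_t^{t+h}\Lambda^{2\alpha}\nabla g_\alpha(s)\,ds$. The goal is then to pull $K*$ through the time average and pass to the limit $h\to0$.

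The main obstacle is that $K$ is a singular kernel and $K*$ is only a principal value, so one cannot simply invoke continuity of $K*$ on $L^1$. I would handle this by truncation: replacing $K$ with $\CharFunc_{A_{\eps,R}(0)}K$, which is a bounded, compactly supported (hence $L^1$) kernel, Fubini's theorem lets the truncated convolution commute with $\int_t^{t+h}\cdots\,ds$. One then lets $(\eps,R)\to(0,\infty)$ inside the $s$-integral. This passage is legitimate because for each $s>0$ the function $\Lambda^{2\alpha}\nabla g_\alpha(s)$ is odd and satisfies the decay and moment hypotheses of \cref{Ward}, so its truncated principal value integrals against $K$ converge, and the dilation relation \cref{ln:dilation-relation-frac-g} shows the associated constants depend continuously on $s$; hence the convergence is uniform for $s$ in the compact interval $[t,t+h]$. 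This yields $K*\bigl[h^{-1}(\nabla g_\alpha(t+h)-\nabla g_\alpha(t))\bigr] = -\tfrac{\nu}{h}\int_t^{t+h}K*\Lambda^{2\alpha}\nabla g_\alpha(s)\,ds$.

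Finally I would let $h\to0$. The self-similar form of $K*\Lambda^{2\alpha}\nabla g_\alpha(s)$ exhibited in the proof of \cref{lem:lambda-nabla-frac-g-estimate} (again via \cref{ln:dilation-relation-frac-g}) shows that $s\mapsto K*\Lambda^{2\alpha}\nabla g_\alpha(s)$ is continuous on $(0,\infty)$, so the time average converges to its value at $s=t$, namely $-\nu K*\Lambda^{2\alpha}\nabla g_\alpha(t)$; this is the derivative, and its membership in $L^1(\R^2)$ is \cref{lem:lambda-nabla-frac-g-estimate}. As an alternative to the truncation argument one could verify the identity on the Fourier side, since $\wh{K}(\xi)=-i\xi^\perp/\abs{\xi}$ is bounded while $\wh{g}_\alpha(t,\xi)=e^{-\nu\abs{\xi}^{2\alpha}t}$ is smooth in $t$ with $\partial_t$-derivatives dominated, uniformly for $t$ in compact subsets of $(0,\infty)$, by an integrable function of $\xi$; differentiation under the Fourier integral then gives the result directly, the singular-kernel subtlety having been absorbed into the boundedness of the multiplier $\wh{K}$.
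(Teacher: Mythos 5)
Your proposal is correct, but it takes a genuinely different route from the paper's proof. The paper argues distributionally: it pairs $\partial_t(K*\nabla g_\alpha)$ with a test function on $(0,T)\times\R^2$, moves $\partial_t$ onto the test function, and passes to the Fourier side, where \cref{L:NeededLemma} identifies $\Cal{F}(\PV K * f)$ with $\wh{K}\wh{f}$; the time derivative is then computed on the explicit Fourier representation using \cref{ln:frac-heat-property} and transformed back, so that all of the singular-kernel difficulty is absorbed at one stroke into the bounded multiplier $\wh{K}$. This is essentially the ``alternative'' you mention at the end of your proposal, except that your sketch of it glosses over exactly the step for which the paper needs \cref{L:NeededLemma}: knowing that the Fourier transform of the principal-value convolution factors as the product of the transforms. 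Your main argument is instead a real-variable one: difference quotients in time, the fundamental theorem of calculus, truncation of $K$ to $\CharFunc_{A_{\eps,R}(0)}K$ so that Fubini applies, and removal of the truncation via the hypotheses of \cref{Ward}. This buys a pointwise-in-$(t,x)$ derivative, which is formally stronger than the paper's distributional identity, at the price of more delicate limit bookkeeping; in particular, the passage of $(\eps,R)\to(0,\infty)$ through the $s$-integral is most cleanly justified not by your ``constants depend continuously on $s$, hence uniform convergence'' remark, but by dominated convergence using the uniform-over-annuli majorant of \cref{L:WardSimple} together with the scaling relation \cref{ln:dilation-relation-frac-g} --- the same pairing the paper itself uses in \cref{L:KgradgalUnifBound} and in the proof of \cref{prop:constitutive-law-holds}. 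Likewise, the continuity in $s$ of $K*\Lambda^{2\alpha}\nabla g_\alpha(s,x)$ that you need in the final limit $h\to 0$ deserves a word (it follows from self-similarity plus continuity of the time-one profile, e.g.\ by Fourier inversion of an integrable function). Both routes rely on \cref{lem:lambda-nabla-frac-g-estimate} for the $L^1$ membership of the right-hand side.
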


    First, we prove a lemma, which we will use in the proof of \cref{L:K-heat-equation-property}.

    \begin{lemma}\label{L:NeededLemma}
        Let $f, \grad f \in C^\iny(\R^2) \cap L^1(\R^2) \cap L^\iny(\R^2)$. Then, with $\psi$ as in (\ref{e:K}),
        \begin{align}\label{e:NeededEq}
        	\begin{split}
	        	\PV K * f &= \psi * \grad^\perp f, \\
    	        \Cal{F} \pr{\PV K * f} &= \wh{K} \wh{f}.
            \end{split}
        \end{align}
    \end{lemma}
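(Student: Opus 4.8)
The plan is to establish the two identities in order: first prove the pointwise identity $\PV K * f = \psi * \grad^\perp f$ by integrating by parts on truncated annuli and passing to the limit, and then deduce the Fourier identity by transforming the first one and invoking the computation of $\wh{K}$ from \cref{L:DeltaK}. The whole point is that $\psi(x) = -\frac{1}{2\pi|x|}$ is locally integrable in $\R^2$ (unlike the singular kernel $K = \grad^\perp\psi$), so that $\psi * \grad^\perp f$ is an absolutely convergent integral: splitting into $|x-y|<1$ and $|x-y|\ge 1$ and using $\grad^\perp f \in L^1 \cap L^\infty$ shows $\int_{\R^2}|\psi(x-y)|\,|\grad^\perp f(y)|\,dy < \iny$ for every $x$, so the right-hand side is a well-defined bounded function.

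For the first identity, fix $x$ and write the truncated principal value as an integral over the annulus $\{\eps < |x-y| < R\}$. Using $K(x-y) = (\grad^\perp\psi)(x-y) = -\grad_y^\perp[\psi(x-y)]$ and the smoothness of $f$, integrate by parts to move the derivative off $\psi$ and onto $f$:
\begin{equation*}
\int_{\eps<|x-y|<R} K(x-y)\, f(y)\,dy = \int_{\eps<|x-y|<R}\psi(x-y)\,\grad^\perp f(y)\,dy + B_\eps(x) + B_R(x),
\end{equation*}
where $B_\eps$ and $B_R$ are boundary integrals over the circles $|x-y|=\eps$ and $|x-y|=R$, each of the form $\int \psi(x-y)\, f(y)\,\tau\,dS$ with $\tau$ a fixed rotation of the outward unit normal. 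As $(\eps,R)\to(0,\iny)$ the volume integral converges to $\psi*\grad^\perp f(x)$ by dominated convergence (dominated by the integrable function above), so it remains to kill the two boundary terms.

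The boundary terms are the crux, and I expect them to be the main obstacle. On the inner circle $\psi \equiv -\frac{1}{2\pi\eps}$ and the circumference is $2\pi\eps$, so naively $B_\eps = O(1)$; the saving is that $\int_{|x-y|=\eps}\tau\,dS = 0$ by symmetry, so after writing $f(y) = f(x) + (f(y)-f(x))$ the constant piece cancels exactly and, using $|f(y)-f(x)|\le \|\grad f\|_{L^\iny}\,\eps$, one gets $|B_\eps| \lesssim \eps \to 0$. On the outer circle $\psi \equiv -\frac{1}{2\pi R}$ and the circumference is $2\pi R$, so $|B_R| \lesssim \sup_{|x-y|=R}|f(y)|$; here one uses that $f$, being uniformly continuous (bounded gradient) and in $L^1(\R^2)$, tends to $0$ at infinity, whence $|B_R| \to 0$. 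This yields the first identity and, in particular, shows that $\PV K * f$ exists and is finite at every $x$.

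Finally, for the Fourier identity I would take the Fourier transform of the identity just proved. Since $\grad^\perp f \in L^1(\R^2)$, its transform is the bounded continuous function $i\xi^\perp \wh{f}(\xi)$, and $\psi$ is a tempered distribution with $\wh{\psi}(\xi) = -|\xi|^{-1}$, which is consistent with and forced by $\wh{K}(\xi) = -i\xi^\perp/|\xi|$ from \cref{L:DeltaK} via $\wh{K} = i\xi^\perp\wh{\psi}$. The product $\wh{\psi}\,\wh{\grad^\perp f} = -|\xi|^{-1}\, i\xi^\perp\, \wh{f} = \wh{K}\,\wh{f}$ is a well-defined tempered distribution because $|\xi|^{-1}$ is locally integrable in $\R^2$ and $\wh{\grad^\perp f}$ is bounded; applying the convolution theorem to the pairing of $\psi\in\Cal{S}'(\R^2)$ with $\grad^\perp f\in L^1(\R^2)$ then gives $\Cal{F}(\psi * \grad^\perp f) = \wh{K}\,\wh{f}$, which is the second identity. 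The one delicate point is justifying the convolution theorem in this borderline setting, which one handles using the explicit homogeneity of $\wh{\psi}$ and the continuity of $\wh{\grad^\perp f}$ (or, if preferred, by approximating $\grad^\perp f$ by Schwartz functions and passing to the limit).
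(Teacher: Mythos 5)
Your proof is correct and follows essentially the same route as the paper: local integrability of $\psi$ makes $\psi * \grad^\perp f$ absolutely convergent, an integration by parts whose inner boundary term vanishes (via the cancellation $\int_{\partial B_\eps(x)} n \, dS = 0$ and continuity of $f$) identifies it with $\PV K * f$, and the Fourier identity then follows from the convolution theorem together with $\wh{K} = i\xi^\perp \wh{\psi}$. The only cosmetic differences are directional: you integrate by parts starting from the PV integral truncated over annuli, so you must also dispose of an outer boundary term (which you do correctly, using that $f \in L^1$ together with uniform continuity forces decay at infinity), whereas the paper starts from $\psi * \prt_j f$ integrated over $B_\eps(x)^C$ and never truncates at infinity, the far-field integral being absolutely convergent.
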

    \begin{proof}
        First observe that $\psi$ is locally integrable. By the assumption on $f$, we have $\psi\ast \partial_j f$ exists, $j = 1, 2$, because
        \begin{align}\label{e:psistarprtjf}
            \psi * \prt_j f
                &= (\CharFunc_{B_1(0)} \psi) * \prt_j f
                    + (\CharFunc_{B_1(0)^C} \psi) * \prt_j f,
        \end{align}
        and the first term is an $L^1$ function convolved with an $L^\iny$ function, while the second part is an $L^\iny$ function convolved with an $L^1$ function.
        Hence,
        \begin{align*}
        	\psi * \prt_j f(x)
				&= \lim_{\eps \to 0} I_{\eps}(x), \quad
            I_{\eps}(x)
                := \int_{B_\eps(x)^C}
                    \psi(x - y) \prt_j f(y)
                        \, dy.
        \end{align*}
        Integrating by parts, using that
        $\prt_{y_j} \psi(x - y) = - \prt_j \psi(x - y)$,
        and considering the orientation of the boundary,
        \begin{align*}
            I_{\eps}(x)
                &= \int_{B_\eps(x)^C}
                    \prt_j \psi(x - y) f(y)
                        \, dy
                    - \int_{\prt B_\eps(x)}
                        f(y) \psi(x - y) n^i_y
                        \, d s(y)
                =: I_1(x) + I_2(x).
        \end{align*}
        On the boundary, $\psi(x - y) = - (1/2 \pi) \eps^{-1}$, so
        \begin{align*}
            I_2(x)
                &= \frac{1}{2 \pi \eps} \int_{\prt B_\eps(x)}
                        ((f(y) - f(x)) n^i_y
                        \, d s(y)
                    + \frac{1}{2 \pi \eps}
                        f(x) 
                        \int_{\prt B_\eps(x)}
                        n^i_y
                        \, d s(y).
        \end{align*}
        The second term integrates to zero, so
        \begin{align*}
            \abs{I_2(x)}
                &\le \frac{1}{2 \pi \eps}
                    2 \pi \eps
                    \sup_{y\in \prt B_\eps(x)}
                        \abs{f(x) - f(y)}
                \to 0
                    \text{ as } \eps \to 0,
        \end{align*}
        because $f$ is continuous.
        Hence, $I_{\eps} \to \PV \prt_j \psi * f$ as $\eps \to 0$,
        giving \cref{e:NeededEq}$_1$.
        
        From \cref{e:NeededEq}$_1$ with \cref{e:psistarprtjf}, $K = (-\prt_2 \psi, \prt_1 \psi) = \grad^\perp \psi$, and  using the linearity of the Fourier transform,
        \begin{align*}
        	\Cal{F} \pr{\PV K * f}
				&= \wh{\psi} \wh{\grad^\perp f}
				= i \xi^\perp \wh{\psi} \wh{f}
				= \wh{\grad^\perp \psi} \wh{f}
                    = \wh{K} \wh{f},
        \end{align*}
 	giving \cref{e:NeededEq}$_2$.
     \end{proof}

\begin{remark}\label{R:NeededLemma}
    We can also write the first conclusion of \cref{L:NeededLemma} as
    $\PV K^j * f = (-1)^j \psi * \prt_{3 - j} \psi*f$.
\end{remark}
\begin{proof}[\textbf{Proof of \cref{L:K-heat-equation-property}}]
    
    Write $\langle \cdot, \cdot \rangle$ for the pairing between $\Cal{D}'((0, T) \times \R^2)$ and $\Cal{D} ((0, T) \times \R^2)$. Then
    \begin{align*}
        \left \langle\pdx{}{t} (K * \grad g_\al), \varphi\right \rangle
            &= - \left \langle K * \grad g_\al, \pdx{}{t} \varphi\right \rangle.
    \end{align*}
    By \cref{lem:operator-L1-estimates}, $K * \grad g_\al \in L^1(\R^2)$. Also, $g_\al, \grad g_\al \in C^\iny(\R^2) \cap L^1(\R^2) \cap L^\iny(\R^2)$, since the Fourier transform of $g_\al$ has spatial exponential decay (see \cref{def:frac-g}). Thus 
    the pairing above is an actual integral of continuous functions, and we have, using \cref{ln:frac-heat-property,L:NeededLemma},
    \begin{align*}
        \left \langle\pdx{}{t} (K* \grad g_\al), \varphi\right\rangle
             &= - \int_0^T \int_{\R^2}
                    (K * \grad g_\al)(t, x)
                    \pdx{}{t} \varphi(t, x) \, dt \, dx \\
            &= - \int_0^T
                \left(K * \grad g_\al, \pdx{\varphi}{t} \right) \, dt
            = - \int_0^T
                \left( \Cal{F}(K * \grad g_\al),
                    \Cal{F}\pr{\pdx{\varphi}{t}} \right) \, dt \\
            &= - \int_0^T
                \left( \wh{K} \wh{\grad g_\al},
                    \pdx{}{t} \wh{\varphi}(t) \right) \, dt
            = - \int_{\R^2} \int_0^T
                \wh{K}(\xi) \wh{\grad g_\al}(t, \xi)
                    \pdx{}{t} \wh{\varphi}(t, \xi)
                    \, dt \, d \xi \\
            &= \int_{\R^2} \int_0^T
                \wh{K}(\xi) \pdx{}{t} \wh{\grad g_\al}(t, \xi)
                    \wh{\varphi}(t, \xi)
                    \, dt \, d \xi \\
            &= \int_{\R^2} \int_0^T
                \wh{K}(\xi) \Cal{F} \pr{\grad \pdx{}{t} g_\al}(t, \xi)
                    \wh{\varphi}(t, \xi)
                    \, dt \, d \xi \\
            &= -\nu \int_{\R^2} \int_0^T
                \wh{K}(\xi) \Cal{F} \pr{\Lambda^{2 \al} \grad g_\al}(t, \xi)
                    \wh{\varphi}(t, \xi)
                    \, dt \, d \xi \\
            &= -\nu \int_{\R^2} \int_0^T
                \Cal{F}
                    \pr{K * \Lambda^{2 \al} \grad g_\al}(t, \xi)
                    \wh{\varphi}(t, \xi)
                    \, dt \, d \xi \\
            &= -\nu \int_0^T
                    \pr{K * \Lambda^{2 \al} 
                        \grad g_\al(t, \xi),
                    \varphi(t, \xi)}
                    \, dt
            = \left \langle-\nu K * \Lambda^{2 \al}  \grad g_\al,
                \varphi\right \rangle.
    \end{align*}
    Above, we applied the Fubini-Tonelli theorem to interchange the order of integration.
\end{proof}

When showing that the constitutive law holds for the mild formulation, we will use the limited form of commutativity of $\PV K$ and $\grad g_\alpha$ given in \cref{L:KUniformConv}. We will also need a form of associativity, as we will see in the proof of \cref{prop:constitutive-law-holds}; for this, we rely on a lemma similar to \cref{Ward} to provide a dominating $L^1_x$ bound.

\begin{lemma}[\cite{WCU}]\label{L:WardSimple} For any function $f$ satisfying the conditions of \cref{Ward} there exists a nonnegative function $F \in L^1(\R^d)$ such that for all $x\in \R^2$,
\begin{align*}
    \abs
        {(\CharFunc_{A_{r, R}(0)} K^j) * f(x)}
            &\le F(x), \quad 1 \le j \le d,
\end{align*}
holds uniformly over $0 < r \le R < \iny$.
\end{lemma}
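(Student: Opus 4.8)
The plan is to exhibit the dominating function explicitly, by bounding the maximal truncation
\[
F(x) := \sup_{0<r\le R<\iny}\abs{(\CharFunc_{A_{r,R}(0)}K^j)*f(x)}
\]
and showing $F\in L^1$. Since $N\ge 1$ and $0<\delta<1-\epsilon$ force $-d-N+\epsilon+\delta<-d$, it suffices to produce a truncation-uniform pointwise bound of the form $F(x)\le C(1+\abs{x})^{-d-N+\epsilon+\delta}$. This is precisely the estimate of \cref{Ward}, but now demanded uniformly in $(r,R)$ rather than only in the limit $(r,R)\to(0,\iny)$; accordingly, the strategy is to revisit the regional decomposition that proves \cref{Ward} (Theorem~3.2 of \cite{WCU}) and verify that each piece is controlled independently of the truncation parameters.

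After writing $(\CharFunc_{A_{r,R}(0)}K^j)*f(x)=\int_{r<\abs{x-y}<R}K^j(x-y)f(y)\,dy$, I would split the domain of integration, independently of $r$ and $R$, into the near-singularity region $\set{\abs{x-y}<\abs{x}/2}$, the remote region $\set{\abs{y}>\abs{x}/2}\cap\set{\abs{x-y}\ge\abs{x}/2}$, and the smooth bulk $\set{\abs{y}\le\abs{x}/2}$ (on which $\abs{x-y}\in[\abs{x}/2,3\abs{x}/2]$), treating $\abs{x}\le 2$ separately by a uniform bound that contributes a bounded function on a compact set. On the near-singularity region the oddness of $K^j$ gives $\int_{r<\abs{z}<\rho}K^j(z)\,dz=0$ over every symmetric shell, so one may replace $f(x-z)$ by $f(x-z)-f(x)$; the mean value theorem with hypothesis (2) (applied on the segment, which stays where $\abs{\cdot}\gtrsim\abs{x}$) together with hypothesis (1) then bound the integrand by $C\abs{x}^{-d-N-1+\epsilon}\abs{z}^{-d+1}$, whose integral over $\set{\abs{z}<\abs{x}/2}$ is $O(\abs{x}^{-d-N+\epsilon})$. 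This bound is manifestly insensitive to $(r,R)$, because the cancellation of an odd kernel over a symmetric shell survives truncation. On the remote region both $\abs{x-y}$ and $\abs{y}$ are $\gtrsim\abs{x}$, so the size of $K^j$ and the decay (1) give an absolute-value bound of the same order $O(\abs{x}^{-d-N+\epsilon})$; here truncation only shrinks the domain, so the bound again holds uniformly.

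The main obstacle is the smooth bulk $\set{\abs{y}\le\abs{x}/2}$, where the kernel is regular but $f$ carries its mass and the decay must come from the vanishing-moment hypothesis (3). When $(r,R)$ leaves this region uncut, i.e. $r\le\abs{x}/2$ and $R\ge 3\abs{x}/2$, one expands $K^j(x-\cdot)$ to order $N-1$ about the origin, uses (3) to annihilate the Taylor polynomial against $f$, and controls the $N$-th order remainder together with the boundary term coming from restricting the full-space moment integral to $\set{\abs{y}\le\abs{x}/2}$ by (1), recovering $O(\abs{x}^{-d-N+\epsilon})$ up to a logarithm that can be absorbed into $\delta$. The difficulty is that (3) is a cancellation over all of $\R^d$: when a truncation sphere $\set{\abs{x-y}=r}$ with $r$ comparable to $\abs{x}$ slices through this region it exposes part of the mass of $f$ through the kernel at scale $\abs{x}$, so the cancellation becomes only partial and the naive bound degrades toward the borderline, non-integrable rate $\abs{x}^{-d}$. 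Recovering the integrable envelope $C(1+\abs{x})^{-d-N+\epsilon+\delta}$ uniformly across such truncations is the heart of the matter; I would approach it by writing each truncated integral as the full principal value (bounded by the \cref{Ward} envelope) minus the two truncation defects $\int_{\abs{x-y}<r}$ and $\int_{\abs{x-y}>R}$, organizing these defect integrals by the same three regions and exploiting the odd-kernel cancellation on the sliced shell. This careful bookkeeping at the truncation boundary is the one genuinely delicate step, and is exactly what is carried out in \cite{WCU}. Assembling the three regional bounds with the bounded contribution from $\abs{x}\le 2$ then yields $F(x)\le C(1+\abs{x})^{-d-N+\epsilon+\delta}\in L^1$, which is the assertion.
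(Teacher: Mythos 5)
Your proposal is correct and takes essentially the same route as the paper: the paper's proof consists precisely of the observation that the explicit decay bound $C(1+\abs{x})^{-d-N+\epsilon+\delta}$ produced in the proof of Theorem 3.2 of \cite{WCU} is uniform over the truncations $0 < r \le R < \infty$, and hence serves as the integrable dominating function $F$. Your write-up merely makes explicit the regional decomposition and truncation bookkeeping that both you and the paper ultimately defer to \cite{WCU} for the delicate step.
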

\begin{proof}
This follows from a straightforward adaptation of the proof of Theorem 3.2 of \cite{WCU}, which gives the result (as an explicit decay bound) for $K^j$.

\end{proof}

\begin{lemma} \label{L:KgradgalUnifBound}
    Let $\alpha>0$. There exists a constant $C = C(\nu, \al)$ such that for all $0 < r < R < \iny$, 
	\begin{align*}
			\norm{(\CharFunc_{A_{r, R}(0)} K^j) * \prt_i g_\alpha(t)}_{L^1}
				\le Ct^{-1/(2\alpha)}.
	\end{align*}
\end{lemma}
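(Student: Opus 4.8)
The plan is to mirror the structure of the proof of \cref{lem:operator-L1-estimates}: first use the dilation relation \eqref{ln:dilation-relation-frac-g} to reduce the estimate at a general time $t>0$ to the single time $t=1$, peeling off the scaling factor $t^{-1/(2\alpha)}$, and then bound the resulting $t=1$ convolution \emph{uniformly over all annuli} by invoking the dominating-function lemma \cref{L:WardSimple}. The crucial observation is that \cref{L:WardSimple} was designed precisely to supply uniformity over annuli, whereas \cref{Ward} only controls the full principal value; so once the scaling is handled, the uniformity in $(r,R)$ is automatic.

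First I would carry out the scaling computation. Writing the truncated convolution as $\int_{A_{r,R}(0)} K^j(z)\,\prt_i g_\alpha(t,x-z)\,dz$, I would substitute the dilation relation for $\prt_i g_\alpha(t)$ (which carries the prefactor $t^{-3/(2\alpha)}$, as in \eqref{ln:dilation-relation-k-deriv-frac-g} with $k=1$) together with the homogeneity $K^j(z) = b^2 K^j(bz)$ for $b = t^{-1/(2\alpha)}$. The change of variables $w = t^{-1/(2\alpha)} z$ maps the cutoff onto the rescaled annulus $A_{r',R'}(0)$, where $r' = r\,t^{-1/(2\alpha)}$ and $R' = R\,t^{-1/(2\alpha)}$, and the factors $t^{1/\alpha}$ (from the Jacobian) and $t^{-1/\alpha}$ (from the homogeneity of $K^j$) cancel. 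Collecting powers exactly as in \eqref{ln:K-D-beta-frac-g}, this yields the identity
\[
(\CharFunc_{A_{r,R}(0)} K^j) * \prt_i g_\alpha(t)(x)
   = t^{-3/(2\alpha)}\,\bigl[(\CharFunc_{A_{r',R'}(0)} K^j) * \prt_i g_\alpha(1)\bigr]\!\left(x\,t^{-1/(2\alpha)}\right).
\]
Taking the $L^1_x$ norm and substituting $v = x\,t^{-1/(2\alpha)}$ (contributing $t^{1/\alpha}$) combines the exponents to $-3/(2\alpha)+1/\alpha = -1/(2\alpha)$, giving $\norm{(\CharFunc_{A_{r,R}(0)} K^j) * \prt_i g_\alpha(t)}_{L^1} = t^{-1/(2\alpha)}\,\norm{(\CharFunc_{A_{r',R'}(0)} K^j) * \prt_i g_\alpha(1)}_{L^1}$ (any $\nu$-dependence from the sharper form of \eqref{ln:dilation-relation-frac-g} being absorbed into the final constant). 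Since $(r,R)$ ranges over all of $(0,\infty)^2$, so does $(r',R')$, so it remains only to bound the $t=1$ quantity uniformly in $r',R'$.

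For this I would apply \cref{L:WardSimple} with $f = \prt_i g_\alpha(1)$. As noted in the proof of \cref{lem:operator-L1-estimates}, $\prt_i g_\alpha(1) = D^\beta g_\alpha(1)$ with $|\beta|=1$ odd satisfies the hypotheses of \cref{Ward} for $N=1$ (and, say, $\epsilon=0$): the decay bounds (1)–(2) follow from the pointwise estimate \eqref{e:gradgalBound} applied with $k=1$ and $k=2$, while the moment condition (3) reduces to $\int_{\R^2}\prt_i g_\alpha(1)\,dx = 0$, which holds by the symmetry/Fourier argument used there. Then \cref{L:WardSimple} furnishes a single nonnegative $F\in L^1(\R^2)$ with $\abs{(\CharFunc_{A_{r',R'}(0)} K^j) * \prt_i g_\alpha(1)(x)} \le F(x)$ uniformly over $0 < r' \le R' < \iny$, whence $\norm{(\CharFunc_{A_{r',R'}(0)} K^j) * \prt_i g_\alpha(1)}_{L^1} \le \norm{F}_{L^1} =: C(\nu,\alpha)$ independently of $r',R'$. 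Combining with the scaling identity completes the proof.

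The step I expect to demand the most care is not any single estimate but the verification that the $t=1$ bound is \emph{genuinely uniform over all annuli}; this is exactly the strengthening that \cref{L:WardSimple} provides over \cref{Ward}, so the whole argument hinges on having that uniform dominating function available. Everything else is routine bookkeeping of the dilation exponents, already carried out in \eqref{ln:K-D-beta-frac-g}, plus the (already established) verification that odd-order derivatives of $g_\alpha(1)$ meet the hypotheses of \cref{Ward}.
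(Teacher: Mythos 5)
Your proposal is correct and is essentially the paper's own argument: the paper proves this lemma in one line by adapting the proof of \cref{lem:operator-L1-estimates}, using the rescaling identity $\CharFunc_{A_{r,R}(0)}(az) = \CharFunc_{A_{r/a,R/a}(0)}(z)$ to handle the annular cutoff and substituting the uniform dominating bound of \cref{L:WardSimple} for \cref{Ward}. Your write-up simply makes explicit the dilation bookkeeping and the verification of the hypotheses, both of which match the paper's intent.
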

\begin{proof}
    The proof is a simple adaptation of \cref{lem:operator-L1-estimates}, using the equality $\CharFunc_{A_{r, R}(0)}(az) = \CharFunc_{A_{r/a, R/a}(0)}(z)$ for any $a > 0$, and using the uniform bound in \cref{L:WardSimple} in place of \cref{Ward}.
\end{proof}

\section{Properties of mild solutions to \cref{e:SQG} and \cref{ssqg}}\label{sec:mild-solution}
\noindent
In this section, we establish some properties of mild solutions to \cref{e:SQG} and \cref{ssqg} as in \cref{D:mildsolution}. 

Our formulation of a mild solution does not fully incorporate a form of the constitutive law; however, in \cref{P:time-regularity}, we show that if the constitutive law holds initially, then it will hold for all time. In \cref{P:SQGMotivation}, we motivate \cref{D:mildsolution} more fully by showing that a sufficiently regular mild solution is, in fact, a classical solution. 
    
When proving the pointwise regularity of the solution, we must establish that the divergence-free condition on $u$ holds for all time if $\dv u_0 = 0$. To that end, we need the following technical result. 
\begin{lemma}\label{L:divuZero}
    Suppose that $f \in (L^1([0, T] \times \R^2))^2$ with $\dv f(t) = 0$ in $\Cal{S}'(\R^2)$ for all $t \in [0, T]$. Then for all $\psi \in L^\iny([0, T] \times \R^2)$ and $t \in [0, T]$,
    \begin{align*}
        \dv \int_0^t (f * \psi)(s, x) \, ds
            = 0
            \text{ in } \Cal{S}'(\R^2).
    \end{align*}
\end{lemma}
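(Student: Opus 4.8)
The plan is to reduce the claim to a fixed-time statement, namely that $\dv\bigl(f(s,\cdot)*\psi(s,\cdot)\bigr)=0$ in $\Cal{S}'(\R^2)$ for a.e.\ $s$, and then integrate in $s$. First I would record that $\int_0^t(f*\psi)(s,\cdot)\,ds$ is a well-defined element of $L^\iny(\R^2)\subset\Cal{S}'(\R^2)$: for a.e.\ $s$ the slice $f(s,\cdot)$ lies in $(L^1(\R^2))^2$ by Fubini, while $\psi(s,\cdot)\in L^\iny(\R^2)$ with norm at most $\|\psi\|_{L^\iny}$, so that $\|(f*\psi)(s,\cdot)\|_{L^\iny}\le\|\psi\|_{L^\iny}\|f(s,\cdot)\|_{L^1}$ and the time integral is controlled by $\|\psi\|_{L^\iny}\|f\|_{L^1([0,t]\times\R^2)}$. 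Pairing $\dv\int_0^t(f*\psi)\,ds$ against a test function $\phi\in\Cal{S}(\R^2)$ and using the same $L^1([0,t]\times\R^2)$ bound to justify Fubini, the claim becomes $\int_0^t\!\!\int_{\R^2}(f*\psi)(s,x)\cdot\grad\phi(x)\,dx\,ds=0$; it then suffices to show the inner $x$-integral vanishes for a.e.\ $s$.

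Fixing such an $s$ and writing $g:=f(s,\cdot)\in(L^1(\R^2))^2$ and $h:=\psi(s,\cdot)\in L^\iny(\R^2)$ with $\dv g=0$ in $\Cal{S}'(\R^2)$, I would transpose the convolution. Expanding $(g_i*h)(x)=\int g_i(y)h(x-y)\,dy$, applying Fubini (justified by $\|h\|_{L^\iny}\|g_i\|_{L^1}\|\prt_i\phi\|_{L^1}<\iny$), and substituting $z=x-y$, the integral $\int_{\R^2}(g_i*h)(x)\,\prt_i\phi(x)\,dx$ equals $\int_{\R^2} g_i(y)\,\prt_{y_i}G(y)\,dy$, where $G(y):=\int_{\R^2} h(z)\phi(z+y)\,dz$. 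Summing over $i$ reduces the matter to showing $\int_{\R^2} g\cdot\grad G\,dy=0$. The function $G$ is smooth with all derivatives bounded (differentiation under the integral is legitimate since $\phi$ and its derivatives are integrable), but it is \emph{not} Schwartz, so $\dv g=0$ cannot be applied to it directly. This is the main obstacle.

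To overcome it I would truncate: set $h_R:=h\,\CharFunc_{B_R(0)}$ and $G_R(y):=\int_{\R^2} h_R(z)\phi(z+y)\,dz$. Because $h_R$ has compact support and $\phi$ is Schwartz, $G_R\in\Cal{S}(\R^2)$ (for $|y|>R$ one bounds $|G_R(y)|$ by $\|h\|_{L^\iny}|B_R(0)|\,C_N(1+|y|-R)^{-N}$, and similarly for derivatives), whence $\int_{\R^2} g\cdot\grad G_R\,dy=-\langle\dv g,G_R\rangle=0$. It then remains to let $R\to\iny$: the error $\grad G(y)-\grad G_R(y)=\int_{|z|>R}h(z)\grad\phi(z+y)\,dz$ tends to $0$ pointwise since $\grad\phi\in L^1(\R^2)$, and one has the uniform bound $|\grad G_R(y)|\le\|h\|_{L^\iny}\|\grad\phi\|_{L^1}$, so dominated convergence against $g\in L^1(\R^2)$ gives $\int_{\R^2} g\cdot\grad G\,dy=\lim_{R\to\iny}\int_{\R^2} g\cdot\grad G_R\,dy=0$.

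This establishes the fixed-time identity $\dv(f(s,\cdot)*\psi(s,\cdot))=0$ for a.e.\ $s$, and integrating in $s$ closes the argument. The only delicate point is the non-Schwartz nature of $G$, which is handled by the truncation-plus-dominated-convergence step; everything else is bookkeeping with Fubini and the elementary $L^1$--$L^\iny$ bounds recorded at the outset.
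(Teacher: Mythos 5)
Your proof is correct, but it is organized differently from the paper's, and the difference is worth noting. Both arguments pair $\int_0^t (f*\psi)\,ds$ against a test function $\phi\in\Cal{S}(\R^2)$, apply Fubini, and reduce everything to the time-slice condition $\dv f(s)=0$ in $\Cal{S}'(\R^2)$; the divergence point is the order in which the two spatial integrals are performed. The paper keeps the integral against $\grad\phi$ innermost: for fixed $(s,y)$ it evaluates $\int_{\R^2} f(s,x-y)\cdot\grad\phi(x)\,dx = \langle f(s),\grad[\phi(\cdot+y)]\rangle$, which vanishes immediately because a translate of a Schwartz function is still Schwartz; the subsequent integration against $\psi(s,y)$ then contributes nothing (the paper phrases this as $\dv f(s)*\tilde{\phi}\equiv 0$). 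You instead integrate against $h=\psi(s,\cdot)$ first, producing $G(y)=\int h(z)\phi(z+y)\,dz$, and must then confront the fact that $G$ is smooth and bounded but not Schwartz --- the obstacle you correctly identify --- which you resolve by truncating to $h_R = h\,\CharFunc_{B_R(0)}$, checking $G_R\in\Cal{S}(\R^2)$, and passing to the limit by dominated convergence against $g\in L^1$. Both routes are sound; the paper's choice of integration order makes your truncation step unnecessary and gives a shorter proof, while your version establishes the slightly stronger pointwise-in-time statement $\dv\bigl(f(s,\cdot)*\psi(s,\cdot)\bigr)=0$ for a.e.\ $s$ and makes explicit the approximation issue that the paper's ordering silently sidesteps.
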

\begin{proof}
    For fixed $t\in [0,T]$, let $I(x) := \int_0^t (f *\psi)(s, x) \, ds$. Note that $I$ is in $(L^\infty(\R^2))^2$. Then for any $\varphi \in \Cal{S}(\R^2)$ we can apply the Tonelli-Fubini theorem to give,
    \begin{align*}
        (\dv I, \varphi)
            &= -(I, \grad \varphi)
            = -\int_0^t \int_{\R^2} (f * \psi)(s, x)
                \cdot \grad \varphi(x)
                \, dx \, ds \\
            &= -\int_0^t \int_{\R^2} \int_{\R^2}
                f(s, x - y) \psi(s, y)
                \cdot \grad \varphi(x)
                \, dy \, dx \, ds \\
            &= -\int_0^t \int_{\R^2} \int_{\R^2}
                f(s, x - y) \cdot \grad \varphi(x) \, \psi(s, y)
                \, dx \, dy \, ds \\
            &= \int_0^t \int_{\R^2}\int_{\R^2} (\dv f)(s,x-y)       \varphi(x) \psi(s,y) dx\, dy\,  ds  \\
            &= \int_0^t (\dv f(s) * \tilde{\varphi}, \tilde{\psi}(s)) \, ds
            = 0.
            \qedhere
    \end{align*}
\end{proof}
Here, $\tilde{\varphi}$ and $\tilde{\psi}$ are reflected versions of $\phi$ and 
$\psi,$ namely $\tilde{\varphi}(x)=\phi(-x)$ and $\tilde{\psi}(y)=\psi(-y).$

\subsection{Continuity in space}
We turn now, in \cref{P:gamma-holder-regularity}, to showing that mild solutions gain some \Holder regularity immediately after time zero. 
\begin{prop}\label{P:gamma-holder-regularity}
    Suppose that $(\theta,u)$ is a mild solution to \cref{e:SQG} on $[0,T]$. If $\alpha>1/2$ then for all $t\in (0,T]$ and $0<\gamma<2\alpha -1$, $(\theta(t,\cdot), u(t,\cdot))\in (C^{\gamma}(\R^2))^3$.
\end{prop}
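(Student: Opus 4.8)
The plan is to work directly from the mild formulation \eqref{SQGintegral}, handling the two integral equations in parallel. Write the $C^\gamma$ norm of each component as the sum of its $L^\infty$ norm and its \Holder seminorm. The $L^\infty$ part is already finite because $(\theta,u)$ is by hypothesis a bounded mild solution, so it suffices to bound $\sup_{x\neq x'}\abs{F(t,x)-F(t,x')}/\abs{x-x'}^\gamma$ for each component $F$ of $(\theta,u)$ at fixed $t\in(0,T]$. The linear terms $G_\alpha(t)\theta_0$ and $G_\alpha(t)u_0$ cause no trouble: for $t>0$ one has $\nabla g_\alpha(t)\in L^1(\R^2)$ by \cref{lem:operator-L1-estimates}, so these terms are Lipschitz in space and hence lie in $C^\gamma$. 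Thus the whole problem reduces to controlling the \Holder seminorm of the two Duhamel integrals.

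The key estimate I would use is a standard pair. For $m\in L^1(\R^2)$ with $\nabla m\in L^1(\R^2)$ and $w\in L^\infty(\R^2)$, writing $h=x-x'$,
\begin{equation*}
\abs{(m*w)(x)-(m*w)(x')}\le\norm{w}_{L^\infty}\,\norm{m(\cdot)-m(\cdot-h)}_{L^1},
\end{equation*}
and $\norm{m(\cdot)-m(\cdot-h)}_{L^1}\le\min\bigl(2\norm{m}_{L^1},\,\abs{h}\,\norm{\nabla m}_{L^1}\bigr)\le C\abs{h}^\gamma\norm{m}_{L^1}^{1-\gamma}\norm{\nabla m}_{L^1}^{\gamma}$, using $\min(A,B)\le A^{1-\gamma}B^{\gamma}$ for $0\le\gamma\le1$. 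I would apply this with $w=(\theta u)(s,\cdot)$, whose $L^\infty$ norm is bounded by $M:=\norm{\theta}_{L^\infty_{t,x}}\norm{u}_{L^\infty_{t,x}}$, taking $m=\partial_j g_\alpha(t-s)$ for the $\theta$-equation and $m$ equal to one of the scalar kernels $K^i*\partial_j g_\alpha(t-s)$ for the $u$-equation (arguing component-wise via \cref{D:KGpsi}). Here \cref{lem:operator-L1-estimates} gives $\norm{\nabla g_\alpha(\sigma)}_{L^1}$ and $\norm{K*\nabla g_\alpha(\sigma)}_{L^1}$ both $\lesssim\sigma^{-1/(2\alpha)}$ (the latter being the odd-order case $k=1$), while $\norm{\nabla^2 g_\alpha(\sigma)}_{L^1}\lesssim\sigma^{-1/\alpha}$ (first estimate of the lemma) and $\norm{K*\nabla^2 g_\alpha(\sigma)}_{L^1}\lesssim\sigma^{-1/\alpha}$ (the even-order extension noted after \cref{lem:operator-L1-estimates}). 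Interpolating with exponent $\gamma$ yields, with $\sigma=t-s$,
\begin{equation*}
\norm{m(\cdot)-m(\cdot-h)}_{L^1}\le C\abs{h}^{\gamma}\,\sigma^{-(1-\gamma)/(2\alpha)}\,\sigma^{-\gamma/\alpha}=C\abs{h}^{\gamma}\,\sigma^{-(1+\gamma)/(2\alpha)}.
\end{equation*}

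Finally I would integrate in $s$. Combining the displays, each Duhamel increment is bounded by
\begin{equation*}
CM\,\abs{h}^{\gamma}\int_0^t (t-s)^{-(1+\gamma)/(2\alpha)}\,ds,
\end{equation*}
and this time integral is finite precisely when $(1+\gamma)/(2\alpha)<1$, that is when $\gamma<2\alpha-1$. This is exactly the hypothesis, and it is the crux of the argument: the blow-up rate $\sigma^{-(1+\gamma)/(2\alpha)}$ of the kernel increments is integrable in time if and only if $\gamma<2\alpha-1$. Dividing by $\abs{h}^{\gamma}$ and taking the supremum over $x\neq x'$ then shows the \Holder seminorm of each component of $(\theta(t),u(t))$ is finite, completing the proof.

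The main obstacle I anticipate is bookkeeping the velocity equation: its increment bound requires the $L^1$ control $\norm{K*\nabla^2 g_\alpha(\sigma)}_{L^1}\lesssim\sigma^{-1/\alpha}$ on an \emph{even}-order derivative, which is not the odd-order statement literally proved in \cref{lem:operator-L1-estimates} but the even-order extension recorded in the remark following it (via \cref{C:g1KInBesov}); I would need to invoke that carefully. Everything else is the standard $L^1$–$L^\infty$ heat-kernel interpolation, and the only genuinely delicate point is matching the time-integrability threshold to the claimed \Holder exponent.
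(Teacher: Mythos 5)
Your proposal is correct and takes essentially the same approach as the paper: the paper's $\widetilde{C}^\gamma$ seminorm is exactly the normalized $L^1$ translation modulus $\sup_{h\neq 0}|h|^{-\gamma}\norm{m(\cdot)-m(\cdot-h)}_{L^1}$ that you estimate, and both arguments control it by interpolating between $\norm{m}_{L^1}$ and $\norm{\grad m}_{L^1}$ using the kernel bounds of \cref{lem:operator-L1-estimates}, arriving at the same time-integrability threshold $\gamma<2\alpha-1$. The only difference is bookkeeping: you obtain the scaling $(t-s)^{-(1+\gamma)/(2\alpha)}$ directly via $\min(A,B)\le A^{1-\gamma}B^{\gamma}$, whereas the paper normalizes at $t=1$ (\cref{L:CgammaW11Embedding}, \cref{C:g1KInBesov}) and then invokes a separate scaling lemma (\cref{L:WideCScaling}); your appeal to the even-order remark for $\norm{K*\grad^2 g_\alpha(\sigma)}_{L^1}$ plays the same role as the paper's use of \cref{C:g1KInBesov}, and involves no circularity.
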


We first establish a series of lemmas.
\begin{lemma}\label{L:fgConvBound}
    Let $f \in L^1(\R^2) \cap C^\gamma(\R^2)$ for some $\gamma \in (0, 1)$ and $g \in L^\iny(\R^2)$. Then
    \begin{align*}
        \norm{f * g}_{C^\gamma}
            &\le \norm{f}_{\widetilde{C}^\gamma} \norm{g}_{L^\iny},
    \end{align*}
    where
    \begin{align*}
        \norm{f}_{\widetilde{C}^\gamma}
            &:= \sup_{x\neq y}\int_{\R^2}
                \frac{\abs{f(x - z) - f(y - z)}}
                    {\abs{x - y}^\gamma}              
                    \, dz.
    \end{align*}
\end{lemma}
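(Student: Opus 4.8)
The goal is to bound the $C^\gamma$ norm of $f * g$, which consists of two pieces: the $L^\infty$ norm and the \Holder seminorm.

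The plan is to estimate each piece separately and show both are controlled by $\norm{f}_{\widetilde{C}^\gamma} \norm{g}_{L^\iny}$. First I would handle the \Holder seminorm, which is the more delicate part. For $x \neq y$, I would write
\begin{align*}
    (f * g)(x) - (f * g)(y)
        &= \int_{\R^2} \brac{f(x - z) - f(y - z)} g(z) \, dz,
\end{align*}
then estimate directly:
\begin{align*}
    \abs{(f * g)(x) - (f * g)(y)}
        &\le \norm{g}_{L^\iny} \int_{\R^2}
            \abs{f(x - z) - f(y - z)} \, dz.
\end{align*}
Dividing by $\abs{x - y}^\gamma$ and taking the supremum over $x \neq y$ recovers exactly the definition of $\norm{f}_{\widetilde{C}^\gamma}$, so the seminorm is bounded by $\norm{f}_{\widetilde{C}^\gamma} \norm{g}_{L^\iny}$.

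For the $L^\iny$ part of the $C^\gamma$ norm, the naive bound $\norm{f * g}_{L^\iny} \le \norm{f}_{L^1} \norm{g}_{L^\iny}$ is available since $f \in L^1$ and $g \in L^\iny$, but the stated conclusion only permits $\norm{f}_{\widetilde{C}^\gamma} \norm{g}_{L^\iny}$ on the right. The key observation I would use is that the $\widetilde{C}^\gamma$-quantity actually dominates $\norm{f}_{L^1}$: taking $y$ to infinity (or more carefully, exploiting that $f \in L^1$ forces $f(y - z) \to 0$ in a suitable averaged sense) inside the defining supremum shows $\int_{\R^2} \abs{f(x - z)} \, dz \le \norm{f}_{\widetilde{C}^\gamma} \abs{x - y}^\gamma$ for appropriate $y$, hence $\norm{f}_{L^1} \le \norm{f}_{\widetilde{C}^\gamma}$ after normalizing the distance. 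I expect this comparison of $\norm{f}_{L^1}$ to $\norm{f}_{\widetilde{C}^\gamma}$ to be the main obstacle, since it requires a careful limiting argument (picking a sequence $y_n$ with $\abs{x - y_n} = 1$ and $f(y_n - z) \to 0$ along a subsequence, justified by integrability of $f$ and Fatou's lemma). Alternatively, the cleanest route may be to absorb both contributions into a single sum and verify that the definition of $\norm{\cdot}_{C^\gamma}$ as $\norm{\cdot}_{L^\iny} + [\,\cdot\,]_{C^\gamma}$ is bounded by noting each summand is controlled; if the intended reading is that $\norm{f}_{\widetilde{C}^\gamma}$ already incorporates the $L^1$ bound, then the $L^\iny$ estimate follows immediately from Young's inequality together with this comparison.

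Assembling the two bounds gives
\begin{align*}
    \norm{f * g}_{C^\gamma}
        &= \norm{f * g}_{L^\iny} + \sup_{x \neq y}
            \frac{\abs{(f * g)(x) - (f * g)(y)}}{\abs{x - y}^\gamma}
        \le \norm{f}_{\widetilde{C}^\gamma} \norm{g}_{L^\iny},
\end{align*}
which is the claim. I would present the seminorm estimate first since it is immediate, then resolve the $L^\iny$ estimate via the comparison $\norm{f}_{L^1} \le \norm{f}_{\widetilde{C}^\gamma}$, taking care with the limiting argument that exploits $f \in L^1(\R^2)$.
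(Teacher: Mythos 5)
Your H\"older-seminorm estimate is, word for word, the paper's \emph{entire} proof: the authors write the same one-line bound
\begin{align*}
    \frac{\abs{f * g(x) - f * g(y)}}{\abs{x - y}^\gamma}
        = \abs{\int_{\R^2} \frac{f(x - z) - f(y - z)}{\abs{x - y}^\gamma}\, g(z) \, dz}
        \le \norm{f}_{\widetilde{C}^\gamma} \norm{g}_{L^\iny}
\end{align*}
and stop there, never addressing the $L^\iny$ component of $\norm{\cdot}_{C^\gamma}$. So the part of your proposal that is solid coincides with everything the paper actually proves, and your instinct that the $L^\iny$ piece is a genuine obstacle is correct.

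However, the fix you propose does not work: the claimed comparison $\norm{f}_{L^1} \le \norm{f}_{\widetilde{C}^\gamma}$ is false, and no limiting argument can rescue it, because the two quantities scale differently under dilation. Take $0 \le \phi \in C_c^\iny(\R^2)$ and $f_\lambda(x) = \lambda^{-2}\phi(x/\lambda)$; a change of variables gives $\norm{f_\lambda}_{L^1} = \norm{\phi}_{L^1}$ for every $\lambda$, while $\norm{f_\lambda}_{\widetilde{C}^\gamma} = \lambda^{-\gamma}\norm{\phi}_{\widetilde{C}^\gamma} \to 0$ as $\lambda \to \iny$. (Your heuristic fails because the definition only yields $\int_{\R^2}\abs{f(x-z)-f(y-z)}\,dz \le \norm{f}_{\widetilde{C}^\gamma}\abs{x-y}^\gamma$, whose right side blows up as $y \to \iny$, and at fixed $\abs{x-y}=1$ the term $f(y-\cdot)$ does not vanish.) The same example with $g \equiv 1$, for which $f_\lambda * g \equiv \norm{\phi}_{L^1}$, shows that the lemma \emph{as literally stated} (with the paper's $C^\gamma$ norm containing $\norm{\cdot}_{L^\iny}$) cannot be true: the left side stays equal to $\norm{\phi}_{L^1}$ while the right side tends to zero. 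The only provable conclusions are the seminorm bound — which is what the paper proves and what it actually uses — or the full $C^\gamma$ bound with $\pr{\norm{f}_{L^1} + \norm{f}_{\widetilde{C}^\gamma}}\norm{g}_{L^\iny}$ on the right, the $L^\iny$ part coming from Young's inequality. That enlarged right-hand side is harmless in the lemma's one application (\cref{P:gamma-holder-regularity}), since the kernels $g_\al(t)$, $\grad g_\al(t)$, and $\PV K * \grad g_\al(t)$ appearing there all have finite $L^1$ norms.
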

\begin{proof}
    Let $x, y \in \R^2$. Then
    \begin{align*}
        &\frac{\abs{f * g(x) - f * g(y)}}
                    {\abs{x - y}^\gamma}
            = \abs{\int_{\R^2}
                \frac{f(x - z) - f(y - z)}
                    {\abs{x - y}^\gamma}              
                    g(z) \, dz
                }
            \le
            \norm{f}_{\widetilde{C}^\gamma} \norm{g}_{L^\iny}.
        \qedhere
    \end{align*}
\end{proof}

Applying \cref{L:fgConvBound} to obtain \cref{P:gamma-holder-regularity} comes down to showing that $g_\al(t)$ and $\PV K * \grad g_\al(t)$ have a $\widetilde{C}^\gamma$ seminorm that scales sufficiently well in time.

\begin{lemma}\label{L:WideCScaling}
    Let $F \in \widetilde{C}^\gamma$, $\gamma \in (0, 1)$, and for any $r > 0$ define $F_r(\cdot) = r^{2 a} F(r^a \cdot)$ for some fixed $a > 0$. Then $F_r \in \widetilde{C}^\gamma$ with
    \begin{align*}
        \norm{F_r}_{\widetilde{C}^\gamma}
            \le r^{a \gamma} \norm{F}_{\widetilde{C}^\gamma}.
    \end{align*}
\end{lemma}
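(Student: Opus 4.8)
The plan is to prove the estimate directly from the definition of the $\widetilde{C}^\gamma$ seminorm by a single change of variables; in fact this will yield equality, of which the stated bound is a special case. First I would expand
\begin{align*}
    \norm{F_r}_{\widetilde{C}^\gamma}
        = \sup_{x \neq y} \int_{\R^2}
            \frac{\abs{F_r(x - z) - F_r(y - z)}}{\abs{x - y}^\gamma}
            \, dz
\end{align*}
and substitute $F_r(\cdot) = r^{2a} F(r^a \cdot)$, pulling the amplitude $r^{2a}$ outside the absolute value so that the numerator reads $r^{2a}\abs{F(r^a(x-z)) - F(r^a(y-z))}$.

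Next I would make the substitution $w = r^a z$ in the $\R^2$-integral. Since we work in two dimensions, the Jacobian contributes a factor $r^{-2a}$, which cancels the amplitude $r^{2a}$ exactly. Writing $x' = r^a x$ and $y' = r^a y$, the numerator becomes $\abs{F(x' - w) - F(y' - w)}$, and the only surviving occurrence of $r$ is in the denominator, where $\abs{x - y}^\gamma = r^{-a\gamma}\abs{x' - y'}^\gamma$ produces a factor $r^{a\gamma}$. Thus the inner integral equals
\begin{align*}
    r^{a\gamma}
        \int_{\R^2}
        \frac{\abs{F(x' - w) - F(y' - w)}}{\abs{x' - y'}^\gamma}
        \, dw.
\end{align*}

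Finally, since the map $(x, y) \mapsto (r^a x, r^a y)$ is a bijection of $\{(x,y) : x \neq y\}$ onto itself, the supremum over $x \neq y$ coincides with the supremum over $x' \neq y'$, whence $\norm{F_r}_{\widetilde{C}^\gamma} = r^{a\gamma} \norm{F}_{\widetilde{C}^\gamma}$; in particular $F_r \in \widetilde{C}^\gamma$ and the claimed inequality holds (indeed with equality). There is no real obstacle here: the only care needed is in the bookkeeping of the three powers of $r$ — the amplitude $r^{2a}$, the two-dimensional Jacobian $r^{-2a}$, and the rescaling $r^{-a\gamma}$ of $\abs{x - y}^\gamma$ — and in observing that all integrands are nonnegative, so no integrability subtleties arise beyond the finiteness of $\norm{F}_{\widetilde{C}^\gamma}$ that is assumed.
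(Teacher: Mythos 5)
Your proof is correct and follows essentially the same route as the paper: both pull out the amplitude $r^{2a}$, rewrite $\abs{x-y}^{-\gamma} = r^{a\gamma}\abs{r^a x - r^a y}^{-\gamma}$, and perform the substitution $w = r^a z$ whose Jacobian $r^{-2a}$ cancels the amplitude, leaving the factor $r^{a\gamma}$. Your additional remark that the bound is in fact an equality (since $(x,y)\mapsto(r^a x, r^a y)$ bijects the set of distinct pairs onto itself) is a harmless strengthening of the same argument.
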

\begin{proof}
     For any distinct $x, y \in \R^2$,
    \begin{align*}
        \int_{\R^2}
            &\frac{\abs{F_r(x - z) - F_r(y - z)}}
                    {\abs{x - y}^\gamma}              
                    \, dz
            = r^{2a} \int_{\R^2}
                \frac{\abs{F(r^a(x - z)) - F(r^a(y - z))}}
                    {\abs{r^a x - r^a y}^\gamma}
                    r^{a \gamma}             
                    \, dz \\
            &= r^{2a + a \gamma} r^{-2a} \int_{\R^2}
                \frac{\abs{F(r^a x - w) - F(r^a y - w)}}
                    {\abs{r^a x - r^a y}^\gamma}             
                    \, dw
            \le r^{a \gamma} \norm{F}_{\widetilde{C}^\gamma}.
            \qedhere
    \end{align*}
\end{proof}

\begin{lemma}\label{L:CgammaW11Embedding}
    If $f \in W^{1, 1}(\R^2)$ then for all $\gamma\in(0,1),$ we have
    $f \in \widetilde{C}^\gamma$ with
    $\norm{f}_{\widetilde{C}^\gamma} \le 2 \norm{f}_{W^{1, 1}}$.
\end{lemma}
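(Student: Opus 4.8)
The plan is to recognize $\norm{f}_{\widetilde{C}^\gamma}$ as a (normalized) $L^1$ modulus of continuity of $f$ and then to control it by the two pieces of the $W^{1,1}$ norm, separating the contributions of small and large translation distances. First I would change variables $w = x - z$ in the defining integral. Since Lebesgue measure is translation invariant, for any distinct $x, y$ and $h := x - y$,
\begin{align*}
    \int_{\R^2} \abs{f(x - z) - f(y - z)} \, dz
        = \int_{\R^2} \abs{f(w) - f(w - h)} \, dw,
\end{align*}
so that
\begin{align*}
    \norm{f}_{\widetilde{C}^\gamma}
        = \sup_{h \neq 0}
            \frac{1}{\abs{h}^\gamma}
            \int_{\R^2} \abs{f(w) - f(w - h)} \, dw.
\end{align*}
The problem is thereby reduced to estimating the $L^1$ translation difference of $f$.

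Next I would establish the standard translation estimate
\begin{align*}
    \int_{\R^2} \abs{f(w) - f(w - h)} \, dw
        \le \abs{h}\, \norm{\grad f}_{L^1}
\end{align*}
for all $f \in W^{1,1}(\R^2)$. For $f \in C_c^\infty(\R^2)$ this is the fundamental theorem of calculus: writing $f(w) - f(w - h) = \int_0^1 \grad f(w - (1 - t) h) \cdot h \, dt$, taking absolute values with $\abs{\grad f \cdot h} \le \abs{\grad f}\abs{h}$, and integrating in $w$ via Fubini (using translation invariance to discard the shift $(1 - t) h$) yields the bound. For general $f \in W^{1,1}$ the estimate passes to the limit along a sequence $f_n \to f$ in $W^{1,1}$ with $f_n \in C_c^\infty$, since the left-hand side is controlled by the $L^1$ norm and the right-hand side by the $W^{1,1}$ norm, so both sides converge.

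Finally I would split on the size of $\abs{h}$. For $\abs{h} \le 1$, since $\gamma \in (0,1)$ we have $\abs{h}^{1 - \gamma} \le 1$, so the translation estimate gives $\abs{h}^{-\gamma} \int_{\R^2} \abs{f(w) - f(w - h)} \, dw \le \abs{h}^{1 - \gamma} \norm{\grad f}_{L^1} \le \norm{\grad f}_{L^1}$. For $\abs{h} > 1$, the triangle inequality with translation invariance gives $\int_{\R^2} \abs{f(w) - f(w - h)} \, dw \le 2 \norm{f}_{L^1}$, and $\abs{h}^{-\gamma} < 1$ then yields the bound $2 \norm{f}_{L^1}$. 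Taking the supremum over $h \neq 0$ produces $\norm{f}_{\widetilde{C}^\gamma} \le \max\{\norm{\grad f}_{L^1},\, 2 \norm{f}_{L^1}\} \le 2 \norm{f}_{W^{1, 1}}$, as claimed.

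The only genuine subtlety is justifying the translation estimate for actual $W^{1,1}$ functions rather than smooth ones, since such $f$ need not be classically differentiable; this is the density step above, which is routine but worth stating explicitly. Everything else is a change of variables and an elementary case distinction.
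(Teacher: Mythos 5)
Your proposal is correct and follows essentially the same argument as the paper: both split into the cases $\abs{x-y}\le 1$ and $\abs{x-y}>1$, use the fundamental theorem of calculus along the segment together with translation invariance of the $L^1$ norm for the small-distance case, and the triangle inequality giving $2\norm{f}_{L^1}$ for the large-distance case. The only difference is that you explicitly justify the fundamental-theorem step for general $W^{1,1}$ functions by smooth approximation, a point the paper's proof applies directly without comment; this is a reasonable extra precaution but not a different method.
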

\begin{proof}
    Assume first that $\abs{x - y} \le 1$ with $x \ne y$, and let $h = (y - x)/\abs{x - y}$.  Then
    \begin{align*}
        \int_{\R^2} &\frac{\abs{f(x - z) - f(y - z)}}
                {\abs{x - y}^\gamma}              
                    \, dz
            \le
            \int_{\R^2} \frac{\abs{f(x - z) - f(y - z)}}
                {\abs{x - y}}              
                    \, dz \\
            &= \frac{1}{\abs{x - y}}
                \int_{\R^2}
                \abs{\int_0^{\abs{x - y}}
                \grad f(x + sh - z) \cdot h
                    \, ds} \, dz
                    \\
            &\le \frac{1}{\abs{x - y}}
                \int_0^{\abs{x - y}}
                \int_{\R^2}
                \abs{\grad f(x + sh - z)}
                    \, dz \, ds \\
            &= \frac{1}{\abs{x - y}}
                \int_0^{\abs{x - y}}
                \norm{\grad f}_{L^1}
                    \, ds
            =\norm{\grad f}_{L^1},
    \end{align*}
    where we used the translation invariance of the $L^1$ norm.
    If $\abs{x - y} > 1$ then
     \begin{align*}
        \int_{\R^2} \frac{\abs{f(x - z) - f(y - z)}}
                    {\abs{x - y}^\gamma}              
                    \, dz
            &\le \int_{\R^2}
                \abs{f(x - z) - f(y - z)}             
                    \, dz
            \le 2 \norm{f}_{L^1},
    \end{align*}
    which completes the proof.
\end{proof}

\begin{cor}\label{C:g1KInBesov}
    Each of $g_\al(1)$, $\grad g_\al(1)$, and $\PV K * \grad g_\al(1)$ have finite $\widetilde{C}^\gamma$ norm.
\end{cor}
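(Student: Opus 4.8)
The plan is to prove all three claims by the same mechanism: reduce each to membership in $W^{1,1}(\R^2)$ and then invoke the embedding $W^{1,1}(\R^2)\hookrightarrow\widetilde{C}^\gamma$ furnished by \cref{L:CgammaW11Embedding}. Since $\widetilde{C}^\gamma$ is defined for scalar functions, I would interpret the vector- and matrix-valued objects entrywise and apply the embedding to each component. Thus for each function it suffices to verify that the function itself and all of its first-order weak derivatives lie in $L^1(\R^2)$.

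For $g_\al(1)$ this will be immediate: $g_\al(1)\ge 0$ with $\int_{\R^2} g_\al(1)=\wh{g}_\al(1,0)=1<\iny$ by \eqref{def:frac-g}, so $g_\al(1)\in L^1$, while $\grad g_\al(1)\in L^1$ by \cref{lem:L1-boundedness-of-frac-heat-kernel}; hence $g_\al(1)\in W^{1,1}$. Likewise $\grad g_\al(1)\in W^{1,1}$, since both $\grad g_\al(1)$ and $\nabla^2 g_\al(1)$ are in $L^1$ by \cref{lem:L1-boundedness-of-frac-heat-kernel} applied with $k=1$ and $k=2$. Applying \cref{L:CgammaW11Embedding} to each then gives the first two assertions.

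It will remain to treat the matrix $\PV K*\grad g_\al(1)$, which I would handle entrywise: fix $i,j\in\{1,2\}$ and consider $K^j*\prt_i g_\al(1)$. Its $L^1$-membership is exactly the second bound of \cref{lem:operator-L1-estimates} at $t=1$ with $k=|\beta|=1$ odd. For the weak derivative I would use $\prt_m\bigl(K^j*\prt_i g_\al(1)\bigr)=K^j*\prt_m\prt_i g_\al(1)$ and show the right-hand side is in $L^1$. Here one \emph{cannot} appeal to the even-order case of \cref{lem:operator-L1-estimates}, since that case is precisely what the remark following it deduces \emph{from} the present corollary; instead I would apply \cref{Ward} directly to $f=\prt_m\prt_i g_\al(1)$. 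Its hypotheses follow from the pointwise bounds \eqref{e:gradgalBound}: with $d=2$ and $N=1$ one has $|f(x)|\lesssim(1+|x|)^{-4-2\al}\le C(1+|x|)^{-d-N}$ and $|\grad f(x)|\lesssim(1+|x|)^{-5-2\al}\le C(1+|x|)^{-d-N-1}$, while the moment condition $\int_{\R^2}\prt_m\prt_i g_\al(1)\,dx=0$ holds because $\prt_i g_\al(1)\in W^{1,1}$. \cref{Ward} then yields $|K^j*\prt_m\prt_i g_\al(1)(x)|\le C(1+|x|)^{-3+\delta}$ for some $\delta\in(0,1)$, which is integrable on $\R^2$. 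Hence $K^j*\prt_i g_\al(1)\in W^{1,1}$, and a final application of \cref{L:CgammaW11Embedding} closes the argument.

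The main obstacle will be this third function: one must sidestep the circular use of the even-order estimate and obtain the $L^1$ bound on $K^j*\nabla^2 g_\al(1)$ directly from \cref{Ward}, which requires both verifying the moment condition $\int_{\R^2}\nabla^2 g_\al(1)=0$ and justifying that the weak derivative passes through the principal-value convolution. The latter is the one genuinely delicate point, but it is legitimate given the integrable polynomial decay of $K^j*\nabla^2 g_\al(1)$ established above together with the integrability of $\nabla g_\al(1)$ and $\nabla^2 g_\al(1)$.
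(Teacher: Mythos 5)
Your proposal is correct, and for $g_\al(1)$ and $\grad g_\al(1)$ it coincides exactly with the paper's proof: $W^{1,1}$ membership from \cref{lem:L1-boundedness-of-frac-heat-kernel} followed by the embedding of \cref{L:CgammaW11Embedding}.

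For $\PV K * \grad g_\al(1)$ you take a genuinely different route. The paper uses $K = \grad^\perp \psi$ (via \cref{L:NeededLemma} and \cref{R:NeededLemma}) to rewrite the derivatives of $\PV K * \grad g_\al(1)$ as $\psi$-convolutions with higher derivatives of $g_\al(1)$, splits $\psi = \varphi\psi + (1-\varphi)\psi$ with a cutoff, integrates by parts to move a derivative onto the far-field factor $(1-\varphi)\psi$, and closes with Young's inequality. You instead keep every derivative on the heat kernel, writing $\prt_m\bigl(K^j * \prt_i g_\al(1)\bigr) = K^j * \prt_m\prt_i g_\al(1)$, and control the right-hand side by applying \cref{Ward} with $N=1$, $\eps = 0$ to $f = \prt_m\prt_i g_\al(1)$; your verification of the hypotheses is correct (decay from \eqref{e:gradgalBound}, and the moment condition also follows instantly on the Fourier side, since $\wh{f}(\xi) = -\xi_m\xi_i\,\wh{g}_\al(1,\xi)$ vanishes at $\xi=0$), and you rightly avoid the circular appeal to the even-order case of \cref{lem:operator-L1-estimates}. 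Your route buys something concrete: the paper's Young's-inequality step requires $\prt_k\bigl((1-\varphi)\psi\bigr) \in L^1(\R^2)$, but $\grad\psi$ decays only like $\abs{x}^{-2}$, which diverges logarithmically upon integration at infinity in two dimensions, so the paper's far-field estimate as written does not hold and needs repair; your argument sidesteps this entirely because the needed cancellation enters through Ward's moment condition rather than through differentiating $\psi$. What the paper's route buys in exchange is that it never has to justify commuting a derivative with a principal-value convolution. That interchange is the one step you leave as a sketch, but it is a fixable technicality rather than a gap: differentiate the truncated convolutions $\bigl(\CharFunc_{A_{r,R}(0)} K^j\bigr) * \prt_i g_\al(1)$, which are ordinary $L^1 * W^{1,1}$ convolutions, and pass to the limit in the sense of distributions using the uniform $L^1$ domination supplied by \cref{L:WardSimple} applied to both $\prt_i g_\al(1)$ and $\prt_m\prt_i g_\al(1)$, exactly in the spirit of the limiting arguments in \cref{L:KUniformConv} and \cref{prop:constitutive-law-holds}.
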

\begin{proof}
    For $g_\al(1)$, $\grad g_\al(1)$, apply
    \cref{L:CgammaW11Embedding} to \cref{lem:L1-boundedness-of-frac-heat-kernel}.
    
    We know that $\PV K * \grad g_\al(1) \in L^1$ by \cref{lem:operator-L1-estimates}. To bound its derivatives, let $\varphi \in C^\iny_c(\R^2)$  take values in $[0, 1]$ with $\varphi \equiv 1$ on $B_1(0)$ and
    supported on $B_2(0)$. Then, much as in \cref{e:psistarprtjf} of \cref{L:NeededLemma} (and see \cref{R:NeededLemma}), for $j = 1, 2$ setting $k = 3 - j$,
    \begin{align*}
        \prt_j (\PV K * \grad g_\al(1))
            &= \abs{\psi * \prt_k \grad g_\al(1)}
            = \abs{(\varphi \psi) * \prt_k \grad g_\al(1)
                    + ((1 - \varphi) \psi) * \prt_k \grad g_\al(1)} \\
            &= \abs{(\varphi \psi) * \prt_k \grad g_\al(1)
                    + (\prt_k (1 - \varphi) \psi) * \grad g_\al(1)},
    \end{align*}
    so
    \begin{align*}
        \norm{\prt_j (\PV K * \grad g_\al(1))}_{L^1}
            &\le \norm{\varphi \psi}_{L^1} \norm{\prt_k \grad g_\al(1)}_{L^1}
                +  \norm{\prt_k ((1 - \varphi) \psi)}_{L^1}
                    \norm{\grad g_\al(1)}_{L^1},
    \end{align*}
    which is finite by \cref{lem:L1-boundedness-of-frac-heat-kernel}. Hence, we can apply \cref{L:CgammaW11Embedding} again to give that $\PV K * \grad g_\al(1)$ has a finite $\widetilde{C}^\gamma$ norm.
\end{proof}

\begin{lemma}\label{L:galKBesovBounds}
    For any $t > 0$,
    \begin{align*}
        \norm{g_\al(t)}_{\widetilde{C}^\gamma}
            &\le C t^{-\frac{\gamma}{2 \al}}, \\ 
        \norm{\grad g_\al(t)}_{\widetilde{C}^\gamma}
            &\le C t^{-\frac{1 + \gamma}{2 \al}}, \\ 
        \norm{\PV K * \grad g_\al(t)}_{\widetilde{C}^\gamma}
            &\le C t^{-\frac{1 + \gamma}{2 \al}}.
    \end{align*}
\end{lemma}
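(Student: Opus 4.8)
The plan is to combine the dilation/scaling relation for $g_\al(t)$ with the scaling behavior of the $\widetilde{C}^\gamma$ seminorm established in Lemma~\ref{L:WideCScaling}, using the base case $t=1$ supplied by Corollary~\ref{C:g1KInBesov}. The key observation is that each of the three functions $g_\al(t)$, $\grad g_\al(t)$, and $\PV K * \grad g_\al(t)$ can be written in the form $F_r(\cdot) = r^{2a} F(r^a \cdot)$ that appears in Lemma~\ref{L:WideCScaling}, where $F$ is the corresponding object evaluated at $t=1$ and $r^a = (\nu t)^{-\frac{1}{2\al}}$.

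Let me carry this out for each of the three bounds.

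\begin{proof}
    Set $a = \frac{1}{2\al}$ and, for a given $t > 0$, let $r = (\nu t)^{-1}$ so that $r^a = (\nu t)^{-\frac{1}{2\al}}$.

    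For the first bound, the dilation relation \cref{ln:dilation-relation-frac-g} gives
    \begin{align*}
        g_\al(t, x)
            &= (\nu t)^{-\frac{1}{\al}} g_\al\pr{1, x (\nu t)^{-\frac{1}{2\al}}}
            = r^{2a} g_\al\pr{1, r^a x},
    \end{align*}
    since $(\nu t)^{-\frac{1}{\al}} = r^{2a}$. Thus $g_\al(t) = (g_\al(1))_r$ in the notation of \cref{L:WideCScaling}. Applying \cref{L:WideCScaling} with $F = g_\al(1)$, whose $\widetilde{C}^\gamma$ norm is finite by \cref{C:g1KInBesov}, we obtain
    \begin{align*}
        \norm{g_\al(t)}_{\widetilde{C}^\gamma}
            &\le r^{a \gamma} \norm{g_\al(1)}_{\widetilde{C}^\gamma}
            = (\nu t)^{-\frac{\gamma}{2\al}} \norm{g_\al(1)}_{\widetilde{C}^\gamma}
            \le C t^{-\frac{\gamma}{2\al}}.
    \end{align*}

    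For the second bound, differentiating \cref{ln:dilation-relation-frac-g} (or using \cref{ln:dilation-relation-k-deriv-frac-g} with $k = 1$) yields
    \begin{align*}
        \grad g_\al(t, x)
            &= (\nu t)^{-\frac{3}{2\al}} (\grad g_\al)\pr{1, x (\nu t)^{-\frac{1}{2\al}}}
            = r^{3a} (\grad g_\al)(1, r^a x).
    \end{align*}
    This is not quite of the form in \cref{L:WideCScaling}, which requires the prefactor $r^{2a}$; instead we have an extra factor $r^{a} = (\nu t)^{-\frac{1}{2\al}}$. Writing $\grad g_\al(t) = r^a \cdot (\grad g_\al(1))_r$ and applying \cref{L:WideCScaling} to $F = \grad g_\al(1)$ (whose $\widetilde{C}^\gamma$ norm is finite by \cref{C:g1KInBesov}), we get
    \begin{align*}
        \norm{\grad g_\al(t)}_{\widetilde{C}^\gamma}
            &= r^a \norm{(\grad g_\al(1))_r}_{\widetilde{C}^\gamma}
            \le r^a \cdot r^{a \gamma} \norm{\grad g_\al(1)}_{\widetilde{C}^\gamma}
            = (\nu t)^{-\frac{1 + \gamma}{2\al}} \norm{\grad g_\al(1)}_{\widetilde{C}^\gamma}
            \le C t^{-\frac{1 + \gamma}{2\al}}.
    \end{align*}

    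For the third bound, we use the scaling computation already performed in \cref{ln:K-D-beta-frac-g} of \cref{lem:operator-L1-estimates}, which with $\beta$ a first-order multi-index gives
    \begin{align*}
        K * \grad g_\al(t, x)
            &= t^{-\left(\frac{1}{\al} + \frac{1}{2\al}\right)} (K * \grad g_\al(1))\pr{x t^{-\frac{1}{2\al}}}.
    \end{align*}
    Absorbing the powers of $\nu$ into the constant, this is again of the form $r^a \cdot (K * \grad g_\al(1))_r$ with $r^a = (\nu t)^{-\frac{1}{2\al}}$ (the same structure as for $\grad g_\al(t)$, since $K$ commutes with the dilation up to the homogeneity of its kernel). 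Applying \cref{L:WideCScaling} with $F = \PV K * \grad g_\al(1)$, whose $\widetilde{C}^\gamma$ norm is finite by \cref{C:g1KInBesov}, yields
    \begin{align*}
        \norm{\PV K * \grad g_\al(t)}_{\widetilde{C}^\gamma}
            &\le C \, r^{a(1 + \gamma)} \norm{\PV K * \grad g_\al(1)}_{\widetilde{C}^\gamma}
            \le C t^{-\frac{1 + \gamma}{2\al}}. \qedhere
    \end{align*}
\end{proof}

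**The main obstacle** I anticipate is bookkeeping the exponents correctly: the prefactor in the dilation relation for $g_\al(t)$ is $(\nu t)^{-1/\al} = r^{2a}$, which matches Lemma~\ref{L:WideCScaling} exactly, but each spatial derivative contributes an additional factor $r^a = (\nu t)^{-1/(2\al)}$ that must be peeled off \emph{before} applying the lemma. I must verify that $\grad g_\al(t)$ and $\PV K * \grad g_\al(t)$ really are (a scalar multiple of) the dilate $(\,\cdot\,)_r$ of their $t=1$ values, rather than dilates with a mismatched power, so that the clean seminorm scaling of Lemma~\ref{L:WideCScaling} applies. For the $K$-convolution term there is the additional subtlety that $K$ itself is homogeneous, so I should lean on the scaling identity $K(z) = b^2 K(bz)$ already exploited in \cref{ln:K-D-beta-frac-g} to justify that the convolution commutes with the dilation in the required way — this is precisely where the principal-value interpretation and the $L^1$ bounds from \cref{lem:operator-L1-estimates} keep everything well-defined.
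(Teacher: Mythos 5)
Your proof is correct and takes essentially the same approach as the paper's: both rest on \cref{L:WideCScaling} and \cref{C:g1KInBesov} applied to the dilation relations \cref{ln:dilation-relation-frac-g} and \cref{ln:K-D-beta-frac-g}, peeling off the additional factor of $t^{-\frac{1}{2\al}}$ for the two gradient terms. Your write-up merely makes explicit the exponent bookkeeping that the paper's one-paragraph proof leaves implicit.
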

\begin{proof}
    The bounds on $\norm{g_\al(t)}_{\widetilde{C}^\gamma}$ and $\norm{\grad g_\al(t)}_{\widetilde{C}^\gamma}$ follow from applying \cref{L:WideCScaling,C:g1KInBesov} with the scaling given by \cref{ln:dilation-relation-frac-g}. The bound on $\norm{\PV K * \grad g_\al(t)}_{\widetilde{C}^\gamma}$ follows from applying \cref{L:WideCScaling,C:g1KInBesov} with the scaling given by \cref{ln:K-D-beta-frac-g}, noting the additional factor in that equation of $t^{-\frac{1}{2 \al}}$.
\end{proof}

\begin{proof}[\textbf{Proof of \cref{P:gamma-holder-regularity}}]
    Applying \cref{L:fgConvBound,L:galKBesovBounds} to \cref{SQGintegral}, we have
    \begin{align*}
        \norm{\theta(t)}_{C^\gamma}
            &\le C t^{-\frac{\gamma}{2 \al}} \norm{\theta_0}_{L^\iny}
                + C \int_0^t
                    (t - s)^{-\frac{1 + \gamma}{2 \al}}
                     \norm{(\theta u)(s)}_{L^\iny} \,ds,
    \end{align*}
    which is finite since $\gamma\in(0,2\alpha-1).$ 
    The bound on $u(t)$ is obtained the same way.
\end{proof}

\subsection{Continuity in time}

With the above result, we can now establish the regularity in time of the mild solution.
\begin{prop}\label{P:time-regularity}
    Suppose that $(\theta,u)$ is a mild solution to \cref{e:SQG} on $[0,T]$. Then we have the following:
    \begin{enumerate}
        \item  If $\dot{\Delta}_j u_0 = (\dot{\Delta}_j K) * \theta_0$ for all $j\in \Z$, then $\dot{\Delta}_j u(t) = (\dot{\Delta}_j K) * \theta(t)$ for all $t \in [0, T]$ and all $j\in\Z$.

        \item\label{i:uthetaCont}
        Let $\alpha>1/2$, then $(\theta, u)$ belongs to $(C((0,T]; L^\infty(\R^2)))^3$.
        \item We have, $\theta(t, x)\to \theta_0(x)$ and $u(t, x) \to u_0(x)$ a.e. $x\in \R^2$ as $t\rightarrow 0$.
        \item  If $\dv u_0 = 0$ then $\dv u = 0$ on $[0, T] \times \R^2$.
    \end{enumerate}
\end{prop}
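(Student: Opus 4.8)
The plan is to reduce each of the four assertions to manipulations of the two Duhamel formulas in \cref{SQGintegral}, together with the kernel estimates of \cref{lem:operator-L1-estimates} and the structural facts about $K$.

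For \textbf{(1)}, I would apply the homogeneous block $\dot\Delta_j=\varphi_j*\,\cdot$ to each equation of \cref{SQGintegral}. By \cref{L:DeltaK} we have $\dot\Delta_j K=\varphi_j*K\in\Cal{S}(\R^2)$, so every convolution against it is absolutely convergent and therefore commutes and associates freely; in particular $\dot\Delta_j$ commutes with $G_\alpha(t)=g_\alpha(t)*\,\cdot$, and it may be moved under the time integral by Fubini using the $L^1$ bounds of \cref{lem:operator-L1-estimates}. Feeding $\dot\Delta_j u_0=(\dot\Delta_j K)*\theta_0$ into the semigroup term gives $\dot\Delta_j G_\alpha(t)u_0=(\dot\Delta_j K)*G_\alpha(t)\theta_0$, while the velocity nonlinearity, being $K*\bigl[\nabla g_\alpha(t-s)\cdot(\theta u)\bigr]$, is turned by $\dot\Delta_j$ into $(\dot\Delta_j K)*\bigl[\nabla g_\alpha(t-s)\cdot(\theta u)\bigr]$. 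Convolving the $\theta$-equation directly by $\dot\Delta_j K$ produces exactly the same two terms, so $\dot\Delta_j u(t)=(\dot\Delta_j K)*\theta(t)$ term by term.

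For \textbf{(2)} and \textbf{(3)} I would use one common set of estimates. The semigroup terms $G_\alpha(t)\theta_0$, $G_\alpha(t)u_0$ are continuous on $(0,T]$ in $L^\infty$ because $t\mapsto g_\alpha(t)$ is continuous in $L^1(\R^2)$ for $t>0$ — an immediate consequence of the dilation relation \cref{ln:dilation-relation-frac-g} and dominated convergence — combined with $\|(g_\alpha(t)-g_\alpha(t_0))*f\|_{L^\infty}\le\|g_\alpha(t)-g_\alpha(t_0)\|_{L^1}\|f\|_{L^\infty}$. For the Duhamel term, fixing $t_0\in(0,T]$ and $t>t_0$, I split the difference into $\int_0^{t_0}[\nabla G_\alpha(t-s)-\nabla G_\alpha(t_0-s)]\cdot(\theta u)\,ds$ and $\int_{t_0}^{t}\nabla G_\alpha(t-s)\cdot(\theta u)\,ds$; the second is $\le C\|\theta u\|_{L^\infty}\int_{t_0}^t(t-s)^{-1/(2\alpha)}ds\to0$, which is finite precisely because $\alpha>\tfrac12$, and the first vanishes by dominated convergence with the same integrable singularity as dominating bound. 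The velocity Duhamel term is identical using $\|K^j*\nabla g_\alpha(t)\|_{L^1}\le Ct^{-1/(2\alpha)}$. For the $t\to0$ limits in (3), the semigroup terms converge at a.e.\ $x$ because $g_\alpha(t)$ is a nonnegative radially decreasing approximate identity with $\int g_\alpha(t)=\hat g_\alpha(t,0)=1$ (see \cref{def:frac-g}), concentrating by \cref{ln:dilation-relation-frac-g}, so $G_\alpha(t)\theta_0\to\theta_0$ and $G_\alpha(t)u_0\to u_0$ at every Lebesgue point; the Duhamel remainders are bounded uniformly in $x$ by $C\|\theta u\|_{L^\infty}\int_0^t(t-s)^{-1/(2\alpha)}ds=C'\,t^{\,1-1/(2\alpha)}\to0$.

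For \textbf{(4)}, I would apply the divergence to the $u$-equation. The semigroup term gives $\dv G_\alpha(t)u_0=G_\alpha(t)\,\dv u_0=0$. For the Duhamel term I write the integrand as $\sum_i\bigl(K*\partial_i g_\alpha(t-s)\bigr)*(\theta u)_i(s)$, where for each fixed $i$ the vector kernel $K*\partial_i g_\alpha=(K^1*\partial_i g_\alpha,\,K^2*\partial_i g_\alpha)$ lies in $(L^1(\R^2))^2$ by \cref{lem:operator-L1-estimates}. Since $K=\nabla^\perp\psi$ by \cref{e:K}, we have $\dv K=0$, hence $\dv_x\bigl(K*\partial_i g_\alpha\bigr)=(\dv K)*\partial_i g_\alpha=0$ in $\Cal{S}'(\R^2)$. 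Thus \cref{L:divuZero} applies with $f=K*\partial_i g_\alpha(t-s)$ and $\psi=(\theta u)_i$, giving $\dv\int_0^t\bigl(K*\partial_i g_\alpha(t-s)\bigr)*(\theta u)_i(s)\,ds=0$; summing over $i$ yields $\dv u=0$ on $[0,T]\times\R^2$.

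The main obstacle is \textbf{(4)}: one must be scrupulous about the order of convolutions, since $K*(\theta u)_i$ is not directly meaningful for $L^\infty$ data. The argument succeeds only because $K*\partial_i g_\alpha$ is first assembled as a genuine $L^1$ kernel via \cref{lem:operator-L1-estimates} and \emph{then} convolved with $(\theta u)_i$, at which stage the identity $\dv K=0$ and \cref{L:divuZero} can legitimately be invoked. A secondary technical point, used in (2), is the $L^1$-continuity in time of $\nabla g_\alpha(t)$ for $t>0$, which I would extract from the dilation relation \cref{ln:dilation-relation-frac-g} rather than from any closed form for $g_\alpha$.
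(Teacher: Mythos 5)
Your proposal is correct. Parts (1), (3), and (4) are essentially the paper's own argument: in (1) you use $\dot\Delta_j K\in\Cal{S}(\R^2)$ (\cref{L:DeltaK}) to commute the Littlewood--Paley block with the semigroup and the Duhamel integral, matching the paper's Fourier-side identification term by term; in (3) you combine the Lebesgue-point/approximate-identity argument for the semigroup terms with the uniform bound $Ct^{1-1/(2\alpha)}$ on the Duhamel remainders, as the paper does via Theorem 8.15 of \cite{Folland}; in (4) you decompose the velocity Duhamel term into the divergence-free $L^1$ vector kernels $K*\partial_i g_\alpha$ and invoke \cref{L:divuZero}, which is exactly the paper's application of that lemma.

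Part (2) is where you take a genuinely different route. The paper controls $\int_0^a\|K*\nabla g_\alpha(b-s)-K*\nabla g_\alpha(a-s)\|_{L^1}\,ds$ by the fundamental theorem of calculus in time, which requires the commutation identity $\partial_t(K*\nabla g_\alpha)=-\nu K*\Lambda^{2\alpha}\nabla g_\alpha$ (\cref{L:K-heat-equation-property}) together with the decay bound $\|K*\Lambda^{2\alpha}\nabla g_\alpha(\rho)\|_{L^1}\le C\rho^{-(1+1/(2\alpha))}$ (\cref{lem:lambda-nabla-frac-g-estimate}); for the semigroup term it uses the \Holder gain of \cref{L:fgConvBound,L:galKBesovBounds} to conclude $G_\alpha(a)u_0$ is uniformly continuous, then an approximation-to-the-identity argument. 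You replace all of this with $L^1$-continuity in time of the kernels for $t>0$, Young's inequality, and dominated convergence in $s$ against the integrable majorant $(t_0-s)^{-1/(2\alpha)}$. This is sound and more elementary: it bypasses \cref{L:K-heat-equation-property}, \cref{lem:lambda-nabla-frac-g-estimate}, and the \Holder machinery entirely. What it gives up is quantitative information: the paper's computation produces an explicit modulus of continuity of order $(b-a)^{1-\frac{1}{2\alpha}}+b^{1-\frac{1}{2\alpha}}-a^{1-\frac{1}{2\alpha}}$, i.e., a \Holder-in-time rate, whereas your argument yields continuity with no rate. Two points you should make explicit to close the argument: first, your dominated-convergence step for the velocity equation needs $L^1$-continuity in $t$ of $K*\nabla g_\alpha(t)$ itself, not only of $\nabla g_\alpha(t)$; this follows by the same dilation reasoning because $K*\nabla g_\alpha(t)$ has the self-similar form \cref{ln:K-D-beta-frac-g} with profile $K*\nabla g_\alpha(1)\in L^1$ (\cref{lem:operator-L1-estimates}), whose pointwise decay is supplied by \cref{Ward}, but your write-up asserts it only for $\nabla g_\alpha$. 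Second, you treat only $t>t_0$; two-sided continuity on $(0,T]$ needs the symmetric splitting for $t<t_0$, which is routine.
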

\begin{proof}

    \textbf{(1):} Suppose that $\dot{\Delta}_j u_0 = (\dot{\Delta}_j K) * \theta_0$. Applying $(\dot{\Delta}_j K) *$ to both sides of the expression for $\theta(t, x)$ in \cref{SQGintegral}, we have
    \begin{align*}
        (\dot{\Delta}_j K) * \theta(t,x)
            &= (\dot{\Delta}_j K) * (G_\alpha(t) \theta_0)(x)
                    - (\dot{\Delta}_j K) * \int_0^t (\nabla G_\alpha(t-s) \cdot (\theta u)(s))(x) \, ds \\
            &= (g_\alpha(t) \stardot (\dot{\Delta}_j K) * \theta_0)(x)
                    - \int_0^t (\dot{\Delta}_j K) * 
                        (\nabla g_\alpha(t-s) \stardot (\theta u)(s))(x) \, ds \\
            &= G_\alpha(t) (\dot{\Delta}_j u_0)(x)
                    - \int_0^t \dot{\Delta}_j ((K * 
                        \nabla g_\alpha(t-s)) \stardot (\theta u)(s))(x) \, ds \\
            &= \dot{\Delta}_j (G_\alpha(t) u_0)(x)
                    - \dot{\Delta}_j \int_0^t (K * 
                        \nabla g_\alpha(t-s)) \stardot (\theta u)(s) \, ds \\
                        &= \dot{\Delta}_j  u (t, x).
    \end{align*}
    Here, we used that $\dot{\Delta}_j K \in \Cal{S}(\R^2)$ by \cref{L:DeltaK} to move $\dot{\Delta}_j K$ inside the integral and to commute $\dot{\Delta}_j K$ with the convolution at time zero. We brought $\dot{\Delta}_j = \varphi_j *$ outside the integral similarly. Finally, we used that
    \begin{align*}
        (\dot{\Delta}_j K) * 
                        \nabla g_\alpha(t-s) \stardot (\theta u)(s)
            = \dot{\Delta}_j ((K * 
                        \nabla g_\alpha(t-s)) \stardot (\theta u)(s)),
    \end{align*}
    as the Fourier transforms of the two expressions coincide.

    \noindent \textbf{(2):} We first aim to bound the size of $u(b,x) - u(a,x)$ uniformly in $x$ for $a,b>0$.
    We can write
    \begin{equation}\label{ln:continuity-in-time-theta-difference}
    \begin{split}
        &\|u(b,x)-u(a,x)\|_{L^\infty_x} \leq \left \| \left(G_\alpha(b)-G_\alpha(a)\right) u_0\right \|_{L^\infty_x}\\
        &\qquad + \left\| \int_0^b K\ast\nabla G_\alpha(b-s)\cdot (\theta u)(s,x)\, ds -  \int_0^a K\ast\nabla G_\alpha(a-s)\cdot (\theta u)(s,x) \, ds\right\|_{L^\infty_x}\\
        &\leq \left \| \left(G_\alpha(b)-G_\alpha(a)\right) u_0(x)\right \|_{L^\infty_x}+\left\| \int_a^b K\ast \nabla G_\alpha(b-s)\cdot (\theta u) (s,x) \, ds\right\|_{L^\infty_x} \\
        &\qquad\qquad
        + \left\| \int_0^a (K\ast \nabla G_\alpha(b-s)- K\ast \nabla G_\alpha(a-s))\cdot (\theta u)(s,x) ds\right\|_{L^\infty_x}.
    \end{split}
    \end{equation}
    For the second term on the right hand side of (\ref{ln:continuity-in-time-theta-difference}), applying Young's inequality and \cref{lem:operator-L1-estimates} for $k=1$ gives the bound,
    \begin{equation} \label{ln:continuity-in-time-theta-b-minus-a}
    \begin{split}
                &\left\| \int_a^b K\ast \nabla G_\alpha(b-s)\cdot (\theta u)(s,x) ds\right\|_{L^\infty_x}
                \leq C\int_a^b (b-s)^{-\frac{1}{2\alpha}} \|(\theta u)(s,x)\|_{L^\infty_{x}} ds\\
                &\qquad \leq  \frac{2\alpha}{2\alpha-1}C\|\theta\|_{L^\infty_{t,x}} \| u\|_{L^\infty_{t,x}} (b-a)^{1-\frac{1}{2\alpha}}.
    \end{split}
    \end{equation}
    Moreover, applying Young's inequality to the third term on the right hand side of (\ref{ln:continuity-in-time-theta-difference}), we have
    \begin{equation} \label{ln:continuity-in-time-theta-uniformly-cont}
    \begin{split}
        \bigg\| \int_0^a &(K\ast \nabla G_\alpha(b-s)- K\ast \nabla G_\alpha(a-s))\cdot (\theta u)(s,x) \, ds\bigg\|_{L^\infty_x} \\
        &\leq  \|\theta\|_{L^\infty_{t,x}} \|u\|_{L^\infty_{t,x}}  \int_0^a \|K\ast \nabla g_\alpha(b-s) - K\ast \nabla g_\alpha(a-s)\|_{L^1_{x}} \, ds.
    \end{split}
    \end{equation}
    For the above integral term, by the Fundamental Theorem of Calculus and \cref{L:K-heat-equation-property}, we can write
    \begin{equation} \label{ln:continuity-in-time-theta-integral-nabla-g}
    \begin{split}
        \int_0^a &\|K\ast \nabla g_{\alpha}(b-s) - K\ast \nabla g_{\alpha}(a-s)\|_{L^1_{x}} \, ds
        = \int_0^a \left\|\int_{a-s}^{b-s} \frac{\partial}{\partial \rho} K\ast \nabla g_{\alpha}(\rho)\, d\rho \right\|_{L^1_{x}}\,  ds \\
        & \leq \int_0^a \int_{a-s}^{b-s} \left \|\frac{\partial}{\partial \rho} K\ast \nabla g_{\alpha}(\rho)\right \|_{L^1_{x}}  d\rho \, ds
        \leq \nu \int_0^a \int_{a-s}^{b-s} \left \|K\ast \Lambda^{2\alpha}\nabla g_{\alpha}(\rho)\right \|_{L^1_{x}}  d\rho \, ds.
    \end{split}
    \end{equation}
    Subsequently, by \cref{lem:lambda-nabla-frac-g-estimate}, one has
     \begin{equation} \label{ln:K-nabla-3-g-est}
         \|K\ast \Lambda^{2\alpha}\nabla g_{\alpha}(\rho)\|_{L^1_{x}} \leq C\rho^{-\left(1+\frac{1}{2\alpha}\right)}.
     \end{equation}
     Substituting (\ref{ln:K-nabla-3-g-est}) into (\ref{ln:continuity-in-time-theta-integral-nabla-g}) and integrating, one finds that,
     \begin{equation} \label{ln:holder-continuity-theta-estimate}
         \int_0^a \|K\ast \nabla g_{\alpha}(b-s) - K\ast \nabla g_{\alpha}(a-s)\|_{L^1_{x}} ds \lesssim \frac{4\alpha^2}{2\alpha-1}\left [(b-a)^{1-\frac{1}{2\alpha}}+b^{1-\frac{1}{2\alpha}} -a^{1-\frac{1}{2\alpha}}\right].
     \end{equation}
    
To estimate the first term on the right hand side of \eqref{ln:continuity-in-time-theta-difference}, we note that by Lemmas \ref{L:fgConvBound} and \ref{L:galKBesovBounds},   $G_\alpha(a) u_0$ belongs to $C^{\gamma}$ for all $\gamma>0$ and is therefore uniformly continuous.  We can then apply an approximation to the identity argument to conclude that, as $b-a\rightarrow 0$,  
\begin{equation}\label{uniformapprox}
\|G_\alpha(b) u_0 - G_\alpha(a) u_0\|_{L^{\infty}_x} = \|G_\alpha(b-a)G_\alpha(a) u_0 - G_\alpha(a) u_0\|_{L^{\infty}_x}\rightarrow 0.
\end{equation}

    Gathering (\ref{ln:continuity-in-time-theta-difference}),(\ref{ln:continuity-in-time-theta-b-minus-a}), (\ref{ln:continuity-in-time-theta-uniformly-cont}), (\ref{ln:holder-continuity-theta-estimate}), and (\ref{uniformapprox}) and taking the limit of \eqref{ln:continuity-in-time-theta-difference} as $a\to b$, the continuity of $u$ is proved. 
    
    For $\theta$, we proceed with a series of estimates analogous to those of $u$ to obtain the following:
    \begin{equation*}
    \begin{split}
         \|\theta(b,x)-\theta(a,x)\|_{L^\infty_x} &\leq  \left\| \left(G_\alpha(b) - G_\alpha(a)\right) \theta_0 \right\|_{L^\infty_x} \\
    &\quad+ C\|\theta\|_{L^\infty_{t,x}} \|u\|_{L^\infty_{t,x}} \left[(b-a)^{1-\frac{1}{2\alpha}}+b^{1-\frac{1}{2\alpha}} -a^{1-\frac{1}{2\alpha}}\right].
    \end{split}
    \end{equation*}
Taking the limit as $a\to b$, the desired continuity of $\theta$ is achieved.

    \noindent\textbf{(3):} The proof is similar to that of (2), but with $b = 0$.  Indeed, the proofs of (2) and (3) differ only in that the first term on the right side of \cref{ln:continuity-in-time-theta-difference}, given by
    \begin{equation*}\label{b0case}
        \norm{u_0 - G_\al(a) u_0}_{L^\iny}
            = \norm{u_0 - g_\al(a) * u_0}_{L^\iny},
    \end{equation*}
    need not vanish as $a \to 0$, since $u_0$ is not necessarily uniformly continuous.  Rather, in this case, we use that $(g_\alpha(t, \cdot)_{t > 0})$ is an approximation to the identity to conclude that $g_\al(a) * u_0(x) \to  u_0(x)$ at every Lebesgue point of $u_0$ (see Theorem 8.15 of \cite{Folland}) and hence a.e.. From this, (3) follows. 

    \noindent \textbf{(4):} We apply \cref{L:divuZero} on $[0,T]$ with $f = K\ast \nabla g_\alpha$  and $\psi = (\theta u)$. The choice of $f$ satisfies the hypotheses of the lemma, as for all $t\in [0,T]$, $\dv f(t) = 0$ in $\mathcal{S}'(\R^2)$  and $f (t) \in L^1(\R^2)$ by \cref{lem:operator-L1-estimates}. Thus,
    \begin{align*}
    	\dv u(t, x)
    		&= \dv (g_\alpha(t) * u_0(x))
    			- \dv \int_0^t ((K * \grad g_\alpha(t - s))\stardot
    					(\theta u)(s))(x) \, ds \\
    		&= g_\alpha(t) * \dv u_0(x).
    \end{align*}
    Therefore, $\dv u(t) = 0$ for all $t \in [0,T]$ if $\dv u_0 = 0$. 
\end{proof}

\subsection{Preservation of the constitutive law}
Having shown the time and spatial regularity of $(\theta,u)$, we are now in a position to prove that (1) If $(\theta_0, u_0)$ satisfy the constitutive law $\cref{SQG}_2$ in the form $u_0 = \PV K * \theta_0$ uniformly then $u(t) = \PV K * \theta(t)$ for $t > 0$; (2) With sufficient regularity of the initial conditions, a solution to the mild formulation satisfies \eqref{e:SQG} pointwise. We prove (1) in this subsection, (2) in the next.

\begin{prop}\label{prop:constitutive-law-holds}
 Suppose that $(\theta,u)$ is a mild solution to \cref{e:SQG} on $[0,T]$ for which $(\theta, u) \in L^\infty([0,T]\times\R^2))^3$. If $u_0 = \PV K * \theta_0$, converging uniformly over annuli, as in \cref{e:InitCondUnif}, then \cref{e:SQG}$_2$ holds.
\end{prop}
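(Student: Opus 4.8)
The plan is to apply the principal-value operator $\PV K *$ to the first equation of \cref{SQGintegral} and check that the result coincides with the second equation there, namely $u(t) = G_\alpha(t)u_0 - \int_0^t (K*\nabla G_\alpha(t-s))\cdot(\theta u)(s)\,ds$. Since the mild solution is unique, this identifies $\PV K * \theta(t)$ with $u(t)$ for every $t \in (0,T]$, which is exactly \cref{e:SQG}$_2$. Thus everything reduces to establishing the two identities
\begin{align*}
    \PV K * (G_\alpha(t)\theta_0)
        &= G_\alpha(t)u_0, \\
    \PV K * \int_0^t \nabla G_\alpha(t-s)\cdot(\theta u)(s)\,ds
        &= \int_0^t (K*\nabla G_\alpha(t-s))\cdot(\theta u)(s)\,ds.
\end{align*}

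The first identity is immediate from \cref{L:KUniformConv}: the hypothesis that $u_0 = \PV K * \theta_0$ converges uniformly over annuli and lies in $L^\iny$ is precisely what that lemma asks of $f = \theta_0$, so $\PV K * (g_\alpha(t)*\theta_0) = g_\alpha(t)*(\PV K*\theta_0) = g_\alpha(t)*u_0 = G_\alpha(t)u_0$.

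For the second identity I would first interchange $\PV K *$ with the time integral. Writing $\PV K * h = \lim_{(r,R)\to(0,\iny)} (\CharFunc_{A_{r,R}(0)}K)*h$ and using Tonelli--Fubini (each $\CharFunc_{A_{r,R}(0)}K$ lies in $L^1$), the matter reduces, for each fixed $s$, to the associativity relation $\PV K *(\nabla g_\alpha(t-s)*(\theta u)(s)) = (K*\nabla g_\alpha(t-s))*(\theta u)(s)$, read componentwise through the $\stardot$ product of \cref{D:KGpsi}. For the truncated kernels this is the ordinary associativity of an $L^1 * L^1 * L^\iny$ convolution, giving $(\CharFunc_{A_{r,R}(0)}K)*(\nabla g_\alpha(t-s)*(\theta u)(s)) = ((\CharFunc_{A_{r,R}(0)}K)*\nabla g_\alpha(t-s))*(\theta u)(s)$; the crux is to pass the annular limit through the outer convolution with $(\theta u)(s)\in L^\iny$. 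Here I would apply \cref{L:WardSimple} to each component $\partial_i g_\alpha(t-s)$ --- an odd-order derivative meeting the moment hypothesis of \cref{Ward} --- to obtain a fixed $F\in L^1$ dominating $(\CharFunc_{A_{r,R}(0)}K^j)*\partial_i g_\alpha(t-s)$ uniformly in $(r,R)$, so that dominated convergence delivers the pointwise limit $(K*\nabla g_\alpha(t-s))*(\theta u)(s)$.

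This associativity, together with the subsequent interchange of the annular limit and the $s$-integration, is the main obstacle. For the latter I would invoke the uniform estimate of \cref{L:KgradgalUnifBound}, $\norm{(\CharFunc_{A_{r,R}(0)}K^j)*\partial_i g_\alpha(t-s)}_{L^1} \le C(t-s)^{-1/(2\alpha)}$, which bounds the integrand by $C(t-s)^{-1/(2\alpha)}\norm{\theta}_{L^\iny_{t,x}}\norm{u}_{L^\iny_{t,x}}$; since $\alpha > 1/2$ this is integrable on $[0,t]$, so a final dominated convergence moves $\lim_{(r,R)}$ outside the time integral. Assembling the two identities reproduces the $u$-equation of \cref{SQGintegral} verbatim, giving $u(t) = \PV K * \theta(t)$.
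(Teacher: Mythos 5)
Your proposal is correct and follows essentially the same route as the paper's proof: apply $\PV K$ componentwise to the $\theta$-equation, handle the initial-data term via \cref{L:KUniformConv}, and handle the Duhamel term by truncating to annuli, using Fubini--Tonelli and $L^1 * L^1 * L^\iny$ associativity, then passing the annular limits via dominated convergence using \cref{L:WardSimple} and the uniform bound of \cref{L:KgradgalUnifBound}. The only cosmetic difference is your invocation of uniqueness at the outset, which is unnecessary — once $\PV K * \theta(t)$ is shown to equal the right-hand side of the $u$-equation in \cref{SQGintegral}, it equals $u(t)$ by the definition of a mild solution.
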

\begin{proof}
    
We show that \eqref{e:SQG}$_2$ is satisfied componentwise. Pick $j=1,2$, and convolve the solution $\theta$ with $\PV K^j$,
\begin{equation} \label{ln:K-convolve-mild-solution}
\begin{split}
        &(\PV K^j \ast \theta)(t,x)= \bigg(\PV K^j \ast \bigg(G_\alpha(t) \theta(0,\cdot) -\int_0^t \nabla G_\alpha(t-s)\cdot(\theta u)(s,\cdot)\, ds \bigg) \bigg)(x) \\
        &\qquad = (\PV K^j \ast (G_\alpha(t) \theta(0,\cdot)))(x) - \left(\PV K^j \ast \int_0^t \nabla G_\alpha(t-s)\cdot(\theta u)(s,\cdot)\,ds\right)(x).
\end{split}
\end{equation}

As for the second term, because $K(x - y) \in L^\iny(A_{r, R}(x))$ and $\grad G_{\alpha}(t - s) \cdot (\theta u)(s, \cdot) \in L^1(\R^2)$, we can invoke the Fubini-Tonelli theorem to give
\begin{equation}\label{ln:contentious-use-of-Fubini-theoremAlt}
\begin{split}
\bigg( \PV K \ast\!&\int_{0}^{t} 
    \nabla G_{\alpha}(t-s)\cdot(\theta u)(s, x)\,ds\;\bigg ) (x) \\
   &= \lim_{r, R}\int_{A_{r, R}(x)}\int_{0}^{t}
        K(x-y)\,
        \bigl(\nabla G_{\alpha}(t-s)\cdot(\theta u)(s,y)\bigr)\,ds\,dy \\[4pt]
   &= \lim_{r, R}\int_{0}^{t} \int_{A_{r, R}(x)}
        K(x-y)\,
        \bigl(\nabla G_{\alpha}(t-s)\cdot(\theta u)(s,y)\bigr)\,dy\,ds \\
   &= \lim_{r, R} \int_0^t
   		(\CharFunc_{A_{r, R}(0)} K) *
			\brac{\grad g_\al(t - s) * (\theta u)}(s, x) \, ds \\
	&= \lim_{r, R} \int_0^t 
   		\brac{(\CharFunc_{A_{r, R}(0)} K) * \grad g_\al(t - s)}
			* (\theta u)(s, x) \, ds.
\end{split}
\end{equation}
Because the integrand was the convolution of two $L^1$ functions and an $L^\iny$ function, we were able to use the associativity of the convolutions. By \cref{L:KgradgalUnifBound},
\begin{align*}
	&\abs{\brac{(\CharFunc_{A_{r, R}(0)} K) * \grad g_\al(t - s)}
			* (\theta u)(s, x)} \\
		&\qquad
		\le \norm{(\CharFunc_{A_{r, R}(0)} K)
			* \grad g_\al(t - s)}_{L^1(\R^2)}
			\norm{(\theta u)(s)}_{L^\iny(\R^2)} \\
		&\qquad
		\le C \norm{\theta u}_{L^\iny((0, T) \times \R^2)} 
			(t - s) ^{-1/(2\alpha)},
\end{align*}
which is in $L^1((0, t))$. Hence, we can apply the dominated convergence theorem to give,
\begin{align*}
	\bigg( \PV K \ast\!&\int_{0}^{t} 
    \nabla G_{\alpha}(t-s)\cdot(\theta u)(s, x)\,ds
    			\;\bigg )(x) \\
    	&= \int_0^t \lim_{r, R} 
   		\brac{(\CharFunc_{A_{r, R}(0)} K) * \grad g_\al(t - s)}
			* (\theta u)(s, x) \, ds \\
		&= \int_0^t (K * \grad G_\al(t - s))
			\cdot (\theta u)(s, x).
\end{align*}

Similarly, with the equality \( K \ast \theta(0,x) = u(0,x) \), by applying \cref{L:KUniformConv}, we have
\begin{equation} \label{ln:initial-data-term-u}
    \operatorname{p.v.} \, K \ast (G_\alpha(t) \theta(0,x)) 
    = G_\alpha(t) (\operatorname{p.v.} \, K \ast \theta(0,x)) 
    = G_\alpha(t) u(0,x).
\end{equation}

Using \eqref{ln:initial-data-term-u} in \eqref{ln:K-convolve-mild-solution}, we obtain
\begin{equation*}
\operatorname{p.v.} \, (K \ast \theta)(t,x) 
= G_\alpha(t) u(0,x) 
- \int_0^t K \ast \nabla G_\alpha(t - s) \cdot (\theta u)(s,x) \, ds 
= u(t,x).
\end{equation*}
From this, we conclude that \eqref{e:SQG}$_2$ holds for all $t \in [0, T]$; that is,
$
	u = \PV K * \theta
$ in $[0,T]\times\R^2$.

\end{proof}

\subsection{Classical \cref{e:SQG} for smooth initial data}

Having shown that \cref{e:SQG}$_2$ holds for mild solutions, we now show that sufficiently smooth mild solutions also satisfy \cref{e:SQG}$_1$. In the following proposition $f(x) = \lceil x \rceil$ denotes the ceiling function.
\begin{prop}\label{P:SQGMotivation}
    Suppose that $(\theta,u)$ is a mild solution to \cref{e:SQG} on $[0,T]$ for which $(\theta, u) \in (L^\infty([0,T]; C^{\lceil 2\alpha \rceil }_b(\R^2)))^3$ and $u_0= \PV K\ast \theta_0$.
    Then $\theta$ and $u$ are once differentiable in time and \cref{e:SQG}$_1$ holds in the classical sense for a.e $x\in \R^2$.
\end{prop}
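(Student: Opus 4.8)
The plan is to differentiate the two mild identities in \eqref{SQGintegral} with respect to $t$ and to recognize the resulting equation for $\theta$ as \cref{e:SQG}$_1$. Since $u_0 = \PV K * \theta_0$ and $K = \nabla^\perp\psi$ has Fourier symbol $-i\xi^\perp/\abs{\xi}$ orthogonal to $\xi$, we have $\dv u_0 = 0$, and by \cref{P:time-regularity} this propagates to $\dv u(t)=0$ for all $t$. I would exploit this divergence-free structure first, to move the gradient off the heat kernel in the nonlinear term of the $\theta$-equation: because $\partial_i g_\alpha(t-s)*(\theta u_i)(s) = g_\alpha(t-s)*\partial_i(\theta u_i)(s)$ for the $C^2_b$ field $\theta u$, summation over $i$ gives
\[
\int_0^t \nabla G_\alpha(t-s)\cdot(\theta u)(s)\,ds = \int_0^t G_\alpha(t-s)\,F(s)\,ds,\qquad F:=\nabla\cdot(\theta u)=u\cdot\nabla\theta .
\]
For $\alpha\in(\tfrac12,1]$ one has $\lceil 2\alpha\rceil=2$, so the hypothesis places $\theta,u\in L^\infty([0,T];C^2_b)$ and hence $F\in L^\infty([0,T];C^1_b)$; interpolating the uniform $C^2_b$ bound against the $L^\infty$-in-time continuity from \cref{P:time-regularity} shows $F$ is continuous (indeed H\"older) in time with values in $C^\gamma$ for some $\gamma\in(0,1)$.

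This reduces \cref{e:SQG}$_1$ to the standard Duhamel statement $\partial_t w = F - \nu\Lambda^{2\alpha}w$ for $w(t):=\int_0^t G_\alpha(t-s)F(s)\,ds$, together with $\partial_t G_\alpha(t)\theta_0 = -\nu\Lambda^{2\alpha}G_\alpha(t)\theta_0$; the latter is immediate from the kernel identity \cref{ln:frac-heat-property} and is classical since $\theta_0\in C^2_b$, on which $\Lambda^{2\alpha}$ agrees with $\Lambda^{2\alpha}_I$ by \cref{lem:fractional-laplacian-agreeing}. For the Duhamel derivative I would form the difference quotient and split
\[
\frac{w(t+h)-w(t)}{h} = \frac1h\int_t^{t+h} G_\alpha(t+h-s)F(s)\,ds + \frac1h\int_0^t\bigl[G_\alpha(t+h-s)-G_\alpha(t-s)\bigr]F(s)\,ds .
\]
The first term tends to $F(t)$ because $(g_\alpha(\tau))_{\tau>0}$ is an approximation to the identity and $F$ is time-continuous. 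In the second term I would write $G_\alpha(t+h-s)-G_\alpha(t-s) = -\nu\int_{t-s}^{t+h-s}\Lambda^{2\alpha}G_\alpha(\rho)\,d\rho$ via \cref{ln:frac-heat-property}, so that it converges to $-\nu\Lambda^{2\alpha}w(t)$. Assembling the pieces yields $\partial_t\theta = -\nu\Lambda^{2\alpha}\theta - F = -\nu\Lambda^{2\alpha}\theta - u\cdot\nabla\theta$, which is \cref{e:SQG}$_1$ for a.e.\ $x$.

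The principal obstacle is that $\norm{\Lambda^{2\alpha}g_\alpha(\rho)}_{L^1}\sim\rho^{-1}$ as $\rho\to 0^+$ (by the scaling \cref{ln:dilation-relation-frac-g} applied to $\partial_\rho g_\alpha=-\nu\Lambda^{2\alpha}g_\alpha$), so neither the convergence of the second term nor even the meaning of $\Lambda^{2\alpha}w(t)$ is a matter of naive differentiation under the integral sign; this is precisely the borderline, maximal-regularity behaviour of the fractional heat equation. I would defeat the singularity using the spatial regularity of $F$ together with the cancellation $\int_{\R^2}\Lambda^{2\alpha}g_\alpha(\rho,y)\,dy=0$ (the symbol vanishes at $\xi=0$): writing
\[
\Lambda^{2\alpha}G_\alpha(\rho)F(x) = \int_{\R^2}\Lambda^{2\alpha}g_\alpha(\rho,y)\,\bigl[F(x-y)-F(x)\bigr]\,dy
\]
and inserting the $C^\gamma$ seminorm of $F$ upgrades the bound to $\norm{\Lambda^{2\alpha}G_\alpha(\rho)F}_{L^\infty}\lesssim \rho^{-1+\gamma/(2\alpha)}[F]_{C^\gamma}$, integrable near $\rho=0$ for any $\gamma>0$. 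This integrable domination legitimizes the dominated-convergence passage in $h$ and simultaneously confirms that $\Lambda^{2\alpha}w(t)$ is well-defined; consistently, $w(t)=G_\alpha(t)\theta_0-\theta(t)\in C^2_b$, so $\Lambda^{2\alpha}w(t)$ already has classical meaning through $\Lambda^{2\alpha}_I$.

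Finally, the once-differentiability of $u$ in time is the more delicate half, since here the derivative on the kernel $K*\nabla g_\alpha(t-s)$ cannot be integrated by parts onto $\theta u$ (the contraction is not a divergence, and $\PV K*(\theta u)$ need not converge for $L^\infty$ data). I would therefore differentiate the second identity in \eqref{SQGintegral} directly, with the roles of \cref{ln:frac-heat-property} and the interior singularity estimate now played by \cref{L:K-heat-equation-property}, giving $\partial_t(K*\nabla g_\alpha(\rho)) = -\nu K*\Lambda^{2\alpha}\nabla g_\alpha(\rho)$, and by the bound $\norm{K*\Lambda^{2\alpha}\nabla g_\alpha(\rho)}_{L^1}\lesssim\rho^{-(1+1/(2\alpha))}$ from \cref{lem:lambda-nabla-frac-g-estimate}; the boundary contribution is controlled analogously through $\norm{K*\nabla g_\alpha(\tau)}_{L^1}\lesssim\tau^{-1/(2\alpha)}$ from \cref{lem:operator-L1-estimates}. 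Both the stronger interior singularity $\rho^{-(1+1/(2\alpha))}$ and the boundary singularity $\tau^{-1/(2\alpha)}$ are absorbed using the vanishing low-order moments of these kernels (their symbols vanish at $\xi=0$, the extra $\Lambda^{2\alpha}$ making the first moment vanish as well) against the first-order Taylor cancellation supplied by $\theta u\in C^{\lceil 2\alpha\rceil}_b$; this is exactly why that much regularity is demanded. As a check on the outcome, one may alternatively differentiate the constitutive identity $u=\PV K*\theta$ established in \cref{prop:constitutive-law-holds}, commuting $\partial_t$ through $\PV K*$ on the time-difference quotients in the uniform-over-annuli framework, to read off $\partial_t u$ directly from $\partial_t\theta$.
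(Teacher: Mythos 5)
Your treatment of the $\theta$-equation is correct, but it takes a genuinely different route from the paper's. The paper truncates the Duhamel integral at $t-\varepsilon_n$, applies the Leibniz rule, and shows the derivatives $\partial_t f_n$ converge uniformly in time (via Theorem 7.17 of \cite{Rudin}); its key device for the borderline singularity is the splitting $\Lambda^{2\alpha} = \Lambda\,\Lambda^{2\alpha-1}$, placing $\Lambda$ on the kernel, where $\norm{\Lambda g_\alpha(t-s)}_{L^1}\lesssim (t-s)^{-1/(2\alpha)}$ is integrable, and $\Lambda^{2\alpha-1}$ on $\dv(\theta u)$, which is bounded thanks to the $C^{\lceil 2\alpha\rceil}_b$ hypothesis. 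You instead pass to the Duhamel form $w(t)=\int_0^t G_\alpha(t-s)F(s)\,ds$ with $F=\dv(\theta u)=u\cdot\nabla\theta$ at the outset, and defeat the non-integrable $\rho^{-1}$ singularity of $\Lambda^{2\alpha}g_\alpha(\rho)$ by the zero-mean/H\"older cancellation estimate $\norm{\Lambda^{2\alpha}G_\alpha(\rho)F}_{L^\infty}\lesssim\rho^{-1+\gamma/(2\alpha)}[F]_{C^\gamma}$. Both proofs rest on $\dv u = 0$ to identify $F$ as $u\cdot\nabla\theta$, and both end by reassembling the mild formulation into $-\nu\Lambda^{2\alpha}\theta - u\cdot\nabla\theta$. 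Your interpolation of the uniform $C^2_b$ bound against the $L^\infty$-in-time continuity neatly replaces the paper's separate \cref{lem:continuity-in-time-of-derivative-of-theta}; in exchange, the paper's operator splitting avoids proving the parabolic smoothing bound, leaning instead on its existing kernel lemmas. Your version is arguably more self-contained and makes the role of the spatial regularity of $F$ more transparent.

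The gap is in your final paragraph, on the time-differentiability of $u$. For the interior term your moment argument does work: the symbol of $K*\Lambda^{2\alpha}\nabla g_\alpha$ is $O(\abs{\xi}^{1+2\alpha})$ with $1+2\alpha>2$, so its zeroth and first moments exist and vanish, and second-order Taylor cancellation against $\theta u\in C^2_b$ renders the $\rho^{-(1+1/(2\alpha))}$ singularity integrable. But the boundary term $(K*\nabla G_\alpha(\varepsilon))\cdot(\theta u)(t-\varepsilon)$ cannot be handled ``analogously'': the symbol of $K^j*\partial_i g_\alpha$ is $\xi_j^\perp\xi_i\abs{\xi}^{-1}e^{-\nu\abs{\xi}^{2\alpha}}$, homogeneous of degree one and hence not $C^1$ at the origin; correspondingly the kernel decays only like $\abs{y}^{-3}$, its first absolute moment diverges, and no first-moment cancellation is available. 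Zeroth-moment cancellation against a $C^\gamma$ modulus gives a bound of order $\varepsilon^{(\gamma-1)/(2\alpha)}$, which blows up for $\gamma<1$, and at $\gamma=1$ the requisite moment integral diverges. What the boundary term would have to converge to is in effect $\PV K*(u\cdot\nabla\theta)(t)$, a Riesz transform of a merely bounded function --- precisely the object whose convergence the entire paper is structured to avoid assuming; your mechanism does not establish it, and your fallback of differentiating $u=\PV K*\theta$ in $t$ likewise requires passing $\partial_t$ through the principal value, which needs uniform-over-annuli control of the difference quotients that you do not supply. In fairness, the paper's own proof is silent on exactly this point: it proves differentiability of $\theta$ and \eqref{e:SQG}$_1$ only and never returns to the claimed differentiability of $u$. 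So your $\theta$-argument fully matches what the paper actually proves, while the $u$-half remains unproven on both sides; but the specific cancellation mechanism you propose for it, as stated, fails.
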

\begin{proof}
To avoid the singularity at $G_\alpha(0)$ and $\nabla G_\alpha(0)$, we first use the Dominated Convergence Theorem to rewrite the mild formulation as 
\begin{equation*}
\begin{split}
     \theta(t,x) &= G_\alpha(t) \theta(0, x) - \int_0^{t} \nabla G_\alpha(t - s) \cdot (u \theta)(s,x) \, ds\\
     &=G_\alpha(t) \theta(0, x) - \lim_{\varepsilon\to 0}\int_0^{t-\varepsilon} \nabla G_\alpha(t - s) \cdot (u \theta)(s,x) \, ds.
     \end{split}
\end{equation*}
Taking a time derivative of our expression for $\theta$ above, we make use of $g_\alpha(t)$ as the fundamental solution of the fractional heat equation given by \eqref{ln:frac-heat-property} and write
\begin{equation}\label{partialtheta}
\begin{split}
    \frac{\partial}{\partial t} \theta(t,x)
    &= -\nu \Lambda^{2\alpha} G_\alpha(t) \theta(0, x)  -\frac{\partial}{\partial t} \lim_{\varepsilon\to 0} \int_0^{t-\varepsilon} \nabla G_\alpha(t - s) \cdot (u \theta)(s,x)\, ds.
    \end{split}
\end{equation}
We wish to show that the derivative and limit can be swapped. Equivalently (see Theorem 7.17 of \cite{Rudin}), fix $t$, let $(\varepsilon_n)_{n=1}^\infty$ be any sequence such that $\varepsilon_n\to 0$ as $n\to \infty$, and define the functions 
\begin{equation*}
    f_n(t,x) := \int_0^{t-\varepsilon_n} \nabla G_\alpha(t - s) \cdot (\theta u)(s,x) \, ds,
\end{equation*}
and $$f(t,x) := \int_0^{t} \nabla G_{\alpha}(t-s)\cdot (\theta u)(s,x) \, ds= \lim_{n\to \infty} f_n(t,x).$$  We will show that for each $x\in\R^2$,
\begin{enumerate}
    \item $f_n(t,x)$ converges uniformly to $f$ in time and
    \item $\partial_t f_n(t,x)$ converges uniformly in time.
\end{enumerate}
 From this, we will conclude that $\partial_t f$ exists and
\begin{equation*}
    \lim_{n\to \infty} \frac{\partial}{\partial t} f_n(t,x) = \frac{\partial}{\partial t} f(t,x).
\end{equation*}
For (1): Uniform convergence in time follows by invoking \cref{lem:operator-L1-estimates} and Young's inequality.  For each $x\in\R^2$, write
\begin{equation*}
\begin{split}
    &\|f(t,x) - f_n(t,x)\|_{L^\infty_t}
        = \left\|\int_{t-\varepsilon_n}^t \nabla G_\alpha(t-s) \cdot (\theta u)(s,x) \, ds \right\|_{L^\infty_t} \\
    &\leq \left\|\int_{t-\varepsilon_n}^t C (t-s)^{-\frac{1}{2\alpha}} \left\|(\theta u) (s,x) \right\|_{L^\infty_x} \, ds \right \|_{L^\infty_t} 
    \leq \frac{2\alpha}{2\alpha-1} C \left\|u \right\|_{L^\infty_{t,x}} \left\|\theta \right\|_{L^\infty_{t,x}} \varepsilon_n^{1-\frac{1}{2\alpha}}.
\end{split}
\end{equation*}
For (2): We note that for each $n$, $\partial_t f_n$ exists using the Leibniz integral rule,
\begin{align}\label{ln:liebniz-rule-applied-to-fn}
    \begin{split}
    \frac{\partial}{\partial t} f_n(t,x) &= \frac{\partial}{\partial t} \int_0^{t-\varepsilon_n}
    \nabla G_\alpha(t - s) \cdot (\theta u) (s,x) \, ds \\
    &= (\nabla G_\alpha(\varepsilon_n)\cdot(\theta u))(t-\varepsilon_n,x) + \int_0^{t-\varepsilon_n} \frac{\partial}{\partial t} \nabla G_\alpha(t-s) \cdot (\theta u)(s,x) \, ds.
    \end{split}
\end{align} 

To justify the use of Leibniz rule above, we note that $\|\frac{\partial}{\partial t} \nabla g_\alpha(t-s,x)\|_{L^1_x}$ is continuous in $s$ on $[0,t-\varepsilon_n]$, and $\|\theta u(s,x)\|_{L^{\infty}_x}$ is bounded on $[0,t-\varepsilon_n]$ by assumption.  Thus, an application of Young's inequality implies that the expression 
\[
\frac{\partial}{\partial t} \left[\nabla G_\alpha(t - s) \cdot (u \theta)(s,x)\right]
\]
exists and is absolutely integrable in $s$ on $[0,t-\varepsilon_n]$.

Now we show that $(\partial_t f_n)_{n=1}^\infty$ is Cauchy in $L^{\infty}_t$.  First note that by integration by parts and an application of \eqref{ln:frac-heat-property},
\begin{equation*}
     \frac{\partial}{\partial t} \left[\nabla G_\alpha(t - s) \cdot (u \theta)(s,x)\right]= \frac{\partial}{\partial t}  G_\alpha(t - s) \dv (u \theta)(s,x) = \Lambda  G_\alpha(t - s) \Lambda^{2\alpha-1}\dv (u \theta)(s,x).
\end{equation*}
Substituting this equality into (\ref{ln:liebniz-rule-applied-to-fn}) gives
\begin{align}\label{partialtfnformula}
    \begin{split}
    \frac{\partial}{\partial t} f_n(t,x) &= \frac{\partial}{\partial t} \int_0^{t-\varepsilon_n}
    \nabla G_\alpha(t - s) \cdot (\theta u) (s,x) \, ds \\
    &= (\nabla G_\alpha(\varepsilon_n)\cdot(\theta u))(t-\varepsilon_n,x) + \int_0^{t-\varepsilon_n} \Lambda  G_\alpha(t - s) \Lambda^{2\alpha-1}\dv (u \theta)(s,x) \, ds.
    \end{split}
\end{align} 
 Thus, we have for  $n>m>0$ and for each $x\in\R^2$,
\begin{equation}\label{ln:uniform-convergence-of-time-derivative-theta}
\begin{split}
    &\|\partial_t f_n(x) - \partial_t f_m(x)\|_{L^\infty_{t}} 
    \leq \left\| 
    (\nabla G_\alpha(\varepsilon_n) - \nabla G_\alpha(\varepsilon_m)) \cdot (\theta u)(t-\epsilon_n,x)
    \right\|_{L^\infty_{t}} \\
    &\qquad\qquad + \left\| 
    \nabla G_\alpha(\varepsilon_m)((\theta u)(t-\epsilon_n,x)-(\theta u)(t-\epsilon_m,x) )
    \right\|_{L^\infty_{t}}\\
    &\qquad\qquad + \nu \left\| 
    \int_{t - \varepsilon_m}^{t - \varepsilon_n} \Lambda G_\alpha(t - s) 
    \Lambda^{2\alpha - 1} \operatorname{div}((u \theta)(s,x))\, ds 
    \right\|_{L^\infty_{t}}.
\end{split}
\end{equation}
For the first term of (\ref{ln:uniform-convergence-of-time-derivative-theta}), we use the Fundamental Theorem of Calculus and the fractional heat kernel property \eqref{ln:frac-heat-property} to write
\begin{align*}
\begin{split}
    &\left \|  (\nabla G_\alpha(\varepsilon_n) - \nabla G_\alpha(\varepsilon_m)) \cdot (\theta u)(t-\varepsilon_n,x)\right \|_{L^\infty_{t}}
    = \left \|  \int_{\varepsilon_n}^{\varepsilon_m}\frac{\partial}{\partial \rho} \nabla G_\alpha(\rho) \cdot (\theta u)(t-\varepsilon_n,x)\, d\rho \right \|_{L^\infty_{t}} \\
    &\qquad\qquad
    \leq \nu \left \|  \int_{\varepsilon_n}^{\varepsilon_m}\nabla G_\alpha(\rho) \cdot \Lambda^{2\alpha}(u \theta) (t-\varepsilon_n,x) \, d\rho \right \|_{L^\infty_{t}}\\
    &\qquad\qquad \leq \nu \left \|  \int_{\varepsilon_n}^{\varepsilon_m} \left\|\nabla G_\alpha(\rho) \cdot \Lambda^{2\alpha} (u \theta)(t-\varepsilon_n,x)  \right\|_{L^\infty_x}\, d\rho \right \|_{L^\infty_{t}}.
\end{split}
\end{align*}
Applying Young's inequality and \cref{lem:operator-L1-estimates}, and integrating with respect to $\rho$, yields
\begin{equation}\label{ln:cauchy-seq-theta-1}
\begin{split}
    &\left \|  \int_{\varepsilon_m}^{\varepsilon_n} \left\|\nabla G_\alpha(\rho) \cdot \Lambda^{2\alpha} (u \theta)(t-\varepsilon_n,x)  \right\|_{L^\infty_x} d\rho \right \|_{L^\infty_{t}} \\
    &\qquad \leq \frac{2\alpha}{2\alpha-1}C\left\|\Lambda^{2\alpha} (u \theta) \right\|_{L^\infty_{t,x}}\left(\varepsilon_m^{1-\frac{1}{2\alpha}} - \varepsilon_n^{1-\frac{1}{2\alpha}}\right).
\end{split}
\end{equation}
We expand the second term of \eqref{ln:uniform-convergence-of-time-derivative-theta} by using integration by parts, Young's convolution inequality, and the divergence-free condition on $u=\nabla^\perp \Lambda \theta$, which follows from \cref{prop:constitutive-law-holds}.  We write    
\begin{equation}\label{UCintime}
\begin{split}
    \sup_{t\in [0,T]} &\left| \nabla G_\alpha(\epsilon_n) \cdot ((\theta u)(t-\epsilon_n,x)-(\theta u)(t-\epsilon_m,x) ) \right| \\
    &= \sup_{t\in [0,T]} \left| G_\alpha(\epsilon_n)\dv ((\theta u)(t-\epsilon_n,x)-(\theta u)(t-\epsilon_m,x) ) \right| \\
    &\leq \sup_{t\in [0,T]} \| g_\alpha(\epsilon_n,x) \|_{L^1_x} \|u\cdot\nabla \theta(t-\epsilon_n,x)-u\cdot\nabla \theta(t-\epsilon_m,x) \|_{L^\infty_x} \\
    &= \sup_{t\in [0,T]}  \|(u\cdot\nabla \theta)(t-\epsilon_n,x)-(u\cdot\nabla \theta)(t-\epsilon_m,x) \|_{L^\infty_x}.
\end{split}
\end{equation}
The above term vanishes as a consequence of the continuity in time of $u$ from \cref{P:time-regularity}(2) and the continuity in time of $\nabla \theta$ from \cref{lem:continuity-in-time-of-derivative-of-theta} below.
Turning to the third term in (\ref{ln:uniform-convergence-of-time-derivative-theta}), we apply \cref{lem:operator-L1-estimates} and invoke the $C^{\lceil 2\alpha \rceil}_b$ regularity of $(\theta, u)$, to write
\begin{equation} \label{ln:cauchy-seq-theta-2}
\begin{split}
    &\left\| \int_{t-\varepsilon_m}^{t-\varepsilon_n} \Lambda  G_\alpha(t - s)  \Lambda^{2\alpha-1}\dv ((u \theta)(s,x)) \, ds \right\|_{L^\infty_t} \\
    &\qquad
    \leq \int_{t-\varepsilon_m}^{t-\varepsilon_n} C (t-s)^{-\frac{1}{2\alpha}} \left\|\Lambda^{2\alpha-1} \operatorname{div} ((u \theta)(s,x)) \right\|_{L^\infty_x} \, ds \\
    &\qquad
    \leq \frac{2\alpha}{2\alpha-1}C \left\|\Lambda^{2\alpha-1} \operatorname{div} (u \theta) \right\|_{L^\infty_{t,x}} \left(\varepsilon_m^{1-\frac{1}{2\alpha}} - \varepsilon_n^{1-\frac{1}{2\alpha}}\right).
\end{split}
\end{equation}
Combined, (\ref{ln:cauchy-seq-theta-1}), (\ref{UCintime}), and (\ref{ln:cauchy-seq-theta-2}) imply that $\partial_t f_n$ is Cauchy in $L^\infty_t$. In addition, $\partial_t f$ exists and $\partial_t f_n \to \partial_t f$ as $n\to \infty$. We utilize this convergence and (\ref{partialtfnformula}) to rewrite (\ref{partialtheta}) as 

\begin{equation}
\label{ln:set-up-pointwise-formulation}
\begin{split}
\frac{\partial}{\partial t}\theta(t,x)
&=
\frac{\partial}{\partial t}\Bigl[G_\alpha(t)\ast \theta(0,\cdot)\Bigr](x)
\;-\;
\lim_{n\to\infty}
\frac{\partial}{\partial t}\,f_n(t,x)
\\
&=
\frac{\partial}{\partial t}\Bigl[G_\alpha(t)\ast \theta(0,\cdot)\Bigr](x)
\;-\;
\lim_{n\to \infty}
\Biggl[
\,\nabla G_\alpha\bigl(\varepsilon_n\bigr)\cdot\bigl(\theta u\bigr)(t-\varepsilon_n,x)
\\
&\qquad\qquad
-\;\nu
\int_{\,0}^{\,t-\varepsilon_n}
 \Lambda  G_\alpha(t - s) \Lambda^{2\alpha-1}\dv (u \theta)(s,x)
\,ds
\Biggr].
\end{split}
\end{equation}
For the first term on the right-hand side of (\ref{ln:set-up-pointwise-formulation}), we apply (\ref{ln:frac-heat-property}), and for the second term, we use integration by parts. Finally, for the third term, we apply the Dominated Convergence Theorem, noting that by a calculation similar to that in (\ref{ln:cauchy-seq-theta-2}), $\Lambda  G_\alpha(t - \cdot) \Lambda^{2\alpha-1}\dv (u \theta)(\cdot,x)$ belongs to $L^1([0,t])$ for each $x$. We conclude that

\begin{equation} \label{ln:pointwise-formulation-of-sqg-1}
\begin{split}
    \frac{\partial}{\partial t} \theta &= -\nu  \Lambda^{2\alpha} G_\alpha(t) \theta(0,\cdot) - \lim_{n \to \infty}  [G_\alpha (\varepsilon_n)\dv(\theta u) (t-\varepsilon_n, \cdot)]\\
    &\qquad +  \nu\int_0^{t} \Lambda^{2\alpha}\nabla G_\alpha (t - s) \cdot (\theta u)(s,\cdot) \, ds.
\end{split}
\end{equation}
For the second term of (\ref{ln:pointwise-formulation-of-sqg-1}), write it as
\begin{equation}\label{ln:divergence-term-classical-solution}
\begin{split}
    G_\alpha(\epsilon_n) &\dv(\theta u)(t-\epsilon_n,\cdot) \\
    &=  G_\alpha(\epsilon_n) (\dv(\theta u)(t-\epsilon_n,\cdot) - \dv(\theta u)(t,\cdot)) + G_\alpha(\epsilon_n) \dv(\theta u)(t,\cdot)    
\end{split}
\end{equation}
Applying the divergence free condition on $u$, the first term of \eqref{ln:divergence-term-classical-solution} vanishes as a consequence of the continuity in time of $\nabla \theta$ from \cref{lem:continuity-in-time-of-derivative-of-theta} and the continuity in time of $u$ from \cref{P:time-regularity}(2).

For the second term of \eqref{ln:divergence-term-classical-solution}, we apply Theorem 8.15 of \cite{Folland} to assert the a.e. convergence of $G_{\alpha}(\varepsilon_n)\dv(\theta u)(t,x)$ to $\dv(u\theta)(t,x)$. Hence,
\begin{equation}\label{ln:pointwise-sqg-term-2}
\begin{split}
         \lim_{n\to \infty} G_\alpha(\varepsilon_n)\dv(\theta u)    &= \dv(\theta u)
            =u\cdot\nabla \theta,
\end{split}
\end{equation}
where we used the divergence free condition on $u$ to get the second equality above.
Notice that for the third term of (\ref{ln:pointwise-formulation-of-sqg-1}), we can interchange the order of time integration and $\Lambda^{2\alpha}$ as a consequence of the Leibniz integral rule, as both
\begin{equation*}
    \nabla G_\alpha(t-s) \cdot (\theta u) \text{  and  } \Lambda^{2\alpha} \nabla G_\alpha(t-s) \cdot (u\theta)
\end{equation*}
are integrable in time by \cref{lem:operator-L1-estimates} and \cref{lem:lambda-nabla-frac-g-estimate}, respectively. Indeed, for 
$\Lambda^{2\alpha} \nabla G_\alpha(t-s) \cdot (u\theta)$, we apply $\Lambda^{2\alpha}$ to the product $\theta u$ and use the boundedness of the derivatives of $\theta u$ to reach the conclusion. Thus, we have
\begin{equation} \label{ln:pointwise-sqg-term-3}
\begin{split}
     \nu\int_0^{t} \Lambda^{2\alpha} \nabla G_\alpha(t - s) \cdot (u \theta)(s,x) \, ds &= \nu\Lambda^{2\alpha} \int_0^{t}\nabla G_\alpha(t - s) \cdot (u \theta)(s,x) \, ds. 
\end{split}
\end{equation}
Collecting terms (\ref{ln:pointwise-sqg-term-2}) and (\ref{ln:pointwise-sqg-term-3}), we can rewrite  (\ref{ln:pointwise-formulation-of-sqg-1}) using the definition of $\theta$ in (\ref{SQGintegral})$_1$ and deduce
\begin{equation*}
\begin{split}
     \frac{\partial}{\partial t} \theta(t,x) &= -\nu \Lambda^{2\alpha} G_\alpha(t) \theta(0,x) -  (u\cdot \nabla \theta)(t,x)
     + \nu\Lambda^{2\alpha}\int_0^{t}\nabla G_\alpha(t - s) \cdot (u \theta)(s,x) \, ds \\
     &= -\nu \Lambda^{2\alpha} \theta (t,x) - (u\cdot \grad \theta)(t,x).
\end{split}
\end{equation*}
Hence, we see that \eqref{e:SQG}$_1$ is satisfied. 
From here, we can estimate the size of $\frac{\partial}{\partial t} \theta$ by
\begin{equation}
    \left\| \frac{\partial}{\partial t} \theta \right\|_{L^\infty_x} \leq \nu \|\Lambda^{2\alpha} \theta\|_{L^\infty_x} + \|u\|_{L^\infty_x} \|\nabla \theta \|_{L^\infty_x} < \infty.
\end{equation}
Thereby, we conclude that  $(\theta,u)$ is a classical solution.
\end{proof}

\subsection{Solutions to \cref{ssqg}}
Reusing the first parts of the argument of \cref{P:SQGMotivation}, by modifying the assumption on the initial data, we can show mild solutions of \eqref{ssqg} also satisfy the equation pointwise.
\begin{prop}\label{P:lp-SQGMotivation}
    Suppose that $(\theta,u) \in L^\infty([0,T]; C^2_b(\R^2))\times (L^\infty([0,T]; C^2_b(\R^2))^2$ and satisfy \eqref{D:mildsolution} with $\dot{\Delta}_j u_0 = (\dot{\Delta}_j K) * \theta_0$ for all $j\in \Z$ and $\dv u_0 = 0$.
    Then $(\theta, u)$ are once differentiable in time and satisfy \cref{ssqg}.
    \begin{proof}
        By \cref{P:time-regularity} (4) one has $\dv u = 0$, Therefore, the argument in \cref{P:SQGMotivation} proceeds identically up to line \eqref{ln:pointwise-formulation-of-sqg-1}. More specifically, we have that $\theta$ is differentiable in time. For any sequence $(\varepsilon_n)_{n=1}^\infty$ with $\varepsilon_n \to 0$, the time derivative of $\theta$ is given by:
    \begin{equation}\label{ln:lp-pointwise-formulation-of-sqg-1}
    \begin{split}
    \frac{\partial}{\partial t} \theta &= -\nu  \Lambda^{2\alpha} G_\alpha(t) \theta(0,x) - \lim_{n \to \infty}  G_\alpha (\varepsilon_n)\dv(\theta u) \\
    &\qquad +  \nu\int_0^{t} \Lambda^{2\alpha}\nabla G_\alpha (t - s) \cdot (u \theta) \, ds.
    \end{split}
    \end{equation}
    For the first term of \eqref{ln:lp-pointwise-formulation-of-sqg-1}, we can successively apply part (4) and Theorem 8.15 of \cite{Folland},
    \begin{equation} \label{ln:lp-pointwise-formulation-of-sqg-2}
    \begin{split}
        \lim_{n\to\infty} G_\alpha(\epsilon_n) \dv (\theta u) &= \lim_{n\to\infty} G_\alpha(\epsilon_n) (u\cdot \nabla \theta)
        = u\cdot \nabla \theta.
    \end{split}
    \end{equation}
    Substituting \eqref{ln:pointwise-sqg-term-3} and \eqref{ln:lp-pointwise-formulation-of-sqg-2} into \eqref{ln:lp-pointwise-formulation-of-sqg-1} yields the desired result for $\theta$,
\begin{equation*}
\begin{split}
     \frac{\partial}{\partial t} \theta
     &= -\nu \Lambda^{2\alpha} \theta - u\cdot \grad \theta.
\end{split}
\end{equation*}
We invoke the same estimate as in the previous proposition for $\frac{\partial}{\partial t} \theta$,
\begin{equation}
    \left\| \frac{\partial}{\partial t} \theta \right\|_{L^\infty_x} \leq \nu \|\Lambda^{2\alpha} \theta\|_{L^\infty_x} + \|u\|_{L^\infty_x} \|\nabla \theta \|_{L^\infty_x} < \infty,
\end{equation}
by the hypotheses. The constitutive law \eqref{ssqg}$_2$ is recovered directly from \cref{P:time-regularity} (1).
\end{proof}
\end{prop}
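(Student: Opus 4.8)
The plan is to recycle the argument of \cref{P:SQGMotivation} essentially verbatim, since nothing in that proof made use of the full constitutive law $u = \PV K * \theta$ beyond its consequence that $\dv u = 0$. The only substantive change is to supply the divergence-free condition from a different source. Here we are handed $\dv u_0 = 0$ as a hypothesis, so the first thing I would do is invoke \cref{P:time-regularity}(4) to conclude $\dv u = 0$ on $[0,T]\times\R^2$. The regularity hypothesis is $C^2_b$, which matches the $C^{\lceil 2\alpha\rceil}_b$ requirement of \cref{P:SQGMotivation} in the relevant range $\alpha\in(\tfrac12,1]$ (where $\lceil 2\alpha\rceil = 2$), so all the quantitative inputs transfer unchanged.

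Next I would run the same regularization-and-differentiation scheme. That is, truncate the singular integral at $s=t$ via the Dominated Convergence Theorem, writing $f_n(t,x) = \int_0^{t-\varepsilon_n}\nabla G_\alpha(t-s)\cdot(\theta u)(s,x)\,ds$ for a sequence $\varepsilon_n\to 0$, and establish uniform-in-time convergence of $(f_n)$ and of $(\partial_t f_n)$. The needed estimates are exactly those already available: \cref{lem:operator-L1-estimates}, \cref{lem:lambda-nabla-frac-g-estimate}, Young's inequality, and the continuity in time of $u$ and $\nabla\theta$ supplied by \cref{P:time-regularity}(2) and \cref{lem:continuity-in-time-of-derivative-of-theta}. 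This permits the interchange of $\partial_t$ with $\lim_n$ and brings me to the intermediate identity \eqref{ln:pointwise-formulation-of-sqg-1}, which is where the two propositions diverge only in the justification of divergence-freeness.

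From there I would split the boundary term $G_\alpha(\varepsilon_n)\dv(\theta u)(t-\varepsilon_n,\cdot)$ as in \eqref{ln:divergence-term-classical-solution}: the difference part vanishes by continuity in time, and the remaining part converges almost everywhere to $\dv(\theta u)(t,\cdot)$ by the approximation-to-the-identity property (Theorem 8.15 of \cite{Folland}). Crucially, at this step I would use $\dv u = 0$, now obtained from \cref{P:time-regularity}(4) rather than from the constitutive law, to rewrite $\dv(\theta u) = u\cdot\nabla\theta$. After interchanging $\Lambda^{2\alpha}$ with the time integral (justified, as before, by the time-integrability of $\nabla G_\alpha(t-s)\cdot(\theta u)$ and $\Lambda^{2\alpha}\nabla G_\alpha(t-s)\cdot(\theta u)$ together with the $C^2_b$ regularity of $\theta u$), I arrive at $\partial_t\theta = -\nu\Lambda^{2\alpha}\theta - u\cdot\nabla\theta$, which is \eqref{ssqg}$_1$. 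The Littlewood--Paley constitutive law \eqref{ssqg}$_2$ is then delivered directly by \cref{P:time-regularity}(1), whose hypothesis $\dot{\Delta}_j u_0 = (\dot{\Delta}_j K)*\theta_0$ is precisely what is assumed.

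The hard part here is not analytic but bookkeeping: the only real obstacle is to verify that every appeal to the divergence-free condition in \cref{P:SQGMotivation} (namely in \eqref{UCintime} and in the splitting \eqref{ln:divergence-term-classical-solution}) used \emph{only} $\dv u = 0$ and never the finer structure $u = \nabla^\perp\Lambda\theta$. Once that is confirmed, the relaxed constitutive law $\dot{\Delta}_j u = (\dot{\Delta}_j K)*\theta$ introduces no new difficulty, and the argument closes with the same a priori bound $\|\partial_t\theta\|_{L^\infty_x} \le \nu\|\Lambda^{2\alpha}\theta\|_{L^\infty_x} + \|u\|_{L^\infty_x}\|\nabla\theta\|_{L^\infty_x} < \infty$, certifying that $(\theta,u)$ is once differentiable in time and is a classical solution of \eqref{ssqg}.
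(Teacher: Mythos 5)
Your proposal is correct and follows essentially the same route as the paper's proof: obtain $\dv u = 0$ from \cref{P:time-regularity}(4) using the hypothesis $\dv u_0 = 0$, rerun the argument of \cref{P:SQGMotivation} verbatim (noting it only ever used $\dv u = 0$, not the full law $u = \nabla^\perp\Lambda^{-1}\theta$) to reach $\partial_t\theta = -\nu\Lambda^{2\alpha}\theta - u\cdot\nabla\theta$, and recover \eqref{ssqg}$_2$ from \cref{P:time-regularity}(1). Your explicit check that the two appeals to divergence-freeness in \eqref{UCintime} and \eqref{ln:divergence-term-classical-solution} require nothing finer is exactly the bookkeeping the paper's terser proof implicitly relies on.
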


\section{Existence of a Finite Time Mild Solution}\label{existence}
In this section, we prove \cref{thm:existence-of-solutions}. 
We let $\tau > 0$, choosing a precise value of $\tau$ later. For $p\in L^\infty([0,\tau]\times\R^2)$ and $\omega \in  (L^\infty([0,\tau]\times\R^2))^2$, we define the two maps,
\begin{align}\label{integralform}
    \begin{split}
        T_\omega p(t, \cdot)
            := G_\alpha(t)\theta_0
                - \int_0^t \nabla G_\alpha(t-s)\cdot (p\hspace{0.1em}\omega)(s) \, ds, \\
        U_p \omega(t, \cdot)
            := G_\alpha(t)u_0
                -\int_0^t (K\ast \nabla G_\alpha(t-s))\cdot(p\hspace{0.1em}\omega) \, ds.
    \end{split}
\end{align}
Our proof of existence will follow an iterative scheme, setting
\begin{equation}\label{scheme}
    \theta^1(t,x) := \theta_0(x), \,
    u^1(t,x) := u_0(x)
    \text{ for all } t \ge 0,
\end{equation}
while for $n\geq 1$,
\begin{align}\label{scheme2}
    \begin{split}
        \theta^{n+1}(t,x)
            &:= T_{u^n} \theta^{n + 1}(t,x)
            = G_\alpha(t)\theta_0-\int_0^t \nabla G_\alpha(t-s) \cdot (u^n\theta^{n+1})(s) \, ds,\\
        u^{n+1}(t,x)
            &:= U_{\theta^{n + 1}} u^n(t,x)
            = G_\alpha(t)u_0  - \int_0^t (K \ast\nabla G_\alpha(t-s)) \cdot (u^n\theta^{n+1})(s) \, ds.
    \end{split}
\end{align}

We iterate over $n = 0, 1, \dots$ as follows:
\begin{enumerate}
    \item
        Setting $\omega = u^n$, we use the Banach contraction mapping theorem to obtain $\theta^{n + 1}$ as the fixed point of the operator $T_\omega$. This fixed point exists on a time interval $\tau > 0$ that depends on the initial data, but is independent of $n$.
        \\[-5pt]
    \item
        For a fixed $p = \theta^{n + 1}$, we set $u^{n + 1} = U_p u^n$.
\end{enumerate}
We then prove the convergence of the sequences $(\theta^n)$ and $(u^n)$ to $\theta$ and $u$, respectively, and we show that $(\theta, u)$ is a mild solution as in \cref{D:mildsolution} up to time $\tau$.

We begin with estimates on the integrals appearing in $T$ and $U$.

From the definition of $G_\alpha(t)$ and \cref{lem:operator-L1-estimates} for $k=1$,
\begin{equation*}
\begin{split} 
&\left\| \int_0^t \nabla G_\alpha(t-s) \cdot(p\hspace{0.1em}\omega) ds \right\|_{L^{\infty}_{x}} \leq  \int_0^t  \left \|\nabla g_\alpha(t-s, y) \ast \cdot (p\hspace{0.1em}\omega) \right\|_{L^{\infty}_{x}} \, ds \\
&\leq C\int_0^t (t-s)^{-\frac{1}{2\alpha}}\|p\hspace{0.1em}\omega\|_{L^\infty_x} ds \leq C\int_0^t (t-s)^{-\frac{1}{2\alpha}}  \|p\|_{L^{\infty}_{x}} \|\omega\|_{L^{\infty}_{x}} \, ds. 
\end{split}
\end{equation*}
Integrating the above in time and applying the $L^\infty_t \equiv L^\infty([0,\tau])$ norm to both sides of the resulting inequality gives
\begin{equation} \label{ln:size-of-integral-nabla-G}
\begin{split} 
&\left\| \int_0^t \nabla G_\alpha(t-s) \cdot(p\hspace{0.1em}\omega) ds \right\|_{L^{\infty}_{t,x}}
\leq \frac{2\alpha}{2\alpha-1}C\tau^{1-\frac{1}{2\alpha}}\|p\|_{L^{\infty}_{t,x}} \|\omega\|_{L^{\infty}_{t,x}}. \end{split}
\end{equation}
Again, using \cref{lem:operator-L1-estimates} for $k=1$,
\begin{equation*}
\begin{split}
&\left \|\int_0^t K\ast \nabla G_\alpha(t-s) \cdot (p\hspace{0.1em}\omega) (s) ds \right\|_{L_{x}^\infty} \leq \int_0^t \left \|(K\ast \nabla G_\alpha(t-s)) \cdot (p\hspace{0.1em}\omega) (s) \right\|_{L_{x}^\infty} ds \\
&\qquad\qquad 
\leq \int_0^t C (t-s)^{-\frac{1}{2\alpha}} \left\|p\right\|_{L_x^\infty}  \left \|\omega \right\|_{L_x^\infty} ds.
\end{split}
\end{equation*}
As before, integrating in time and applying the $L^\infty_t$ norm to both sides gives
\begin{equation} \label{ln:size-of-integral-K-nabla-G}
\begin{split} 
&\left\| \int_0^t K\ast \nabla G_\alpha(t-s) \cdot(p\hspace{0.1em}\omega) ds \right\|_{L^{\infty}_{t,x}} 
 \leq \frac{2\alpha}{2\alpha-1}C\tau^{1-\frac{1}{2\alpha}}\|p\|_{L^{\infty}_{t,x}} \|\omega\|_{L^{\infty}_{t,x}}. \end{split}
\end{equation}

Now, we choose $\tau>0$ to satisfy the following size condition,
\begin{equation}\label{Tcondition}
\frac{2\alpha}{2\alpha-1}C\tau^{1-\frac{1}{2\alpha}} ( \| {\theta}_0 \|_{L^{\infty}_x}+ \| u_0 \|_{L^{\infty}_x}) \leq \tfrac{1}{8}.
\end{equation}

\textbf{Convergence of Approximating Sequence.} We begin the iterative process by realizing $\theta^2$ as the fixed point of $T_{u_1}$ using the Banach contraction mapping theorem (Step 1). From there we can quickly obtain $u^2$ and then for illustrative purposes, we proceed by generating $\theta^3$ with a similar argument, as the fixed point of $T_{u_2}$ (Step 2). Finally, we consider the general case of $\theta^{n+1}$ as the fixed point of $T_{u^n}$ (Step 3). 
\medskip

\noindent\textbf{Step 1 (Fixed point of $T_{u^1}$)}: Set $R=2\|\theta_0\|_{L_x^{\infty}}$ and define $B_R$ as the ball of radius $R$ centered at the origin in $L^\infty([0,\tau]\times \R^2)$. Let $p$ and $\bar{p}$ be two elements of $B_R$.  Then, using estimate (\ref{ln:size-of-integral-nabla-G}) and the equality $\|u^1\|_{L^\infty_{t,x}} = \|u_0\|_{L^\infty_{x}}$ from (\ref{scheme}), we have
\begin{equation*}
\begin{split} 
&\|T_{u^1} p - T_{u^1} \bar{p} \|_{L^{\infty}_{t, x}} = \left \|\int_{0}^t \nabla G_\alpha(t-s) \cdot(u^1(p-\bar{p})) \, ds \right\|_{L^\infty_{t,x}} \\
&\leq \frac{2\alpha}{2\alpha-1}C\tau^{1-\frac{1}{2\alpha}} \|u^1\|_{L^{\infty}_{t,x}} \|p - \bar{p} \|_{L^{\infty}_{t,x}} =  \frac{2\alpha}{2\alpha-1}C \tau^{1-\frac{1}{2\alpha}} \|u_0\|_{L^{\infty}_{x}} \|p - \bar{p} \|_{L^{\infty}_{t,x}}.
\end{split}
\end{equation*}
Invoking the size condition on $\tau$ in (\ref{Tcondition}), we conclude that 
\begin{equation} \label{ln:estimate-on-contraction-map-with-u-1}
    \| T_{u^1}p - T_{u^1}\bar{p} \|_{L^\infty_{t,x}} \leq \frac{1}{8} \| p - \bar{p} \|_{L^\infty_{t,x}}.
\end{equation}

To see that $T_{u^1}$ maps $B_R$ into $B_R$, select $p\in B_R$. Applying the estimate (\ref{ln:estimate-on-contraction-map-with-u-1}) and Young's inequality, we write
\begin{equation}
\begin{split} \label{ln:size-of-theta-1}
&\| T_{u^1} p \|_{L^{\infty}_{t,x}} \leq \| T_{u^1} p - T_{u^1} 0 \|_{L^{\infty}_{t,x}} + \| T_{u^1} 0 \|_{L^{\infty}_{t,x}} \leq \frac{1}{8} \|p - 0 \|_{L^{\infty}_{t,x}} + \| G_\alpha(t) \theta_0 \|_{L^{\infty}_{t,x}} \\
&\leq \frac{1}{8} \|p \|_{L^{\infty}_{t,x}} + \sup_{t\in(0,\tau]}\| g_\alpha(t) \|_{L^1_x} \|\theta_0 \|_{L^{\infty}_{x}} \leq \frac{1}{4} \|\theta_0\|_{L^\infty_{x}} + \|\theta_0\|_{L^\infty_x} = \frac{5}{4}\|\theta_0\|_{L^\infty_x} \leq R.
\end{split}
\end{equation}

It follows that $T_{u^1}$ is a strict contraction from $B_R$ into $B_R$.  Thus, there exists a fixed point of $T_{u^1}$, call it $\theta^{1+1} = \theta^2$. 

To proceed, we establish the existence of $u^2 := U_{\theta^2} u^1$ by proving the boundedness of $U_{\theta^2}$. By $(\ref{scheme2})_2$ and (\ref{ln:size-of-integral-K-nabla-G}),
\begin{equation*}
\begin{split}
\| u^2(t) \|_{L^{\infty}_{t,x}} &=  \left\| G_\alpha(t)u_0(x)  - \int_0^t (K \ast\nabla G_\alpha(t-s)) \cdot (\theta^{2} u^1)(s) \, ds \right\|_{L^\infty_{t,x}} \\ 
&\leq  \left\| G_\alpha(t)u_0(x)\right\|_{L^\infty_{t,x}}  + \left \|\int_0^t (K \ast\nabla G_\alpha(t-s)) \cdot (\theta^{2} u^1)(s) \, ds \right\|_{L^\infty_{t,x}} \\
& \leq \| u_0 \|_{L^{\infty}_x} + \frac{2\alpha}{2\alpha-1}C\tau^{1-\frac{1}{2\alpha}}\| \theta^2 \|_{L^\infty_{t,x}} \| u^1 \|_{L^{\infty}_{t,x}}.
\end{split}
\end{equation*}

We then substitute in the estimate $\|\theta^2\|_{L_{t,x}^\infty} \leq \frac{5}{4}\|\theta_0\|_{L_x^\infty}$ from (\ref{ln:size-of-theta-1}) and apply our condition on $\tau$ in (\ref{Tcondition}).  This gives 
\begin{equation} \label{velA22}
\begin{split}
\| u^2(t) \|_{L^{\infty}_{t,x}} &\leq \| u_0 \|_{L^{\infty}_{x}} + \frac{5}{4} \frac{2\alpha}{2\alpha-1}C\tau^{1-\frac{1}{2\alpha}} \| \theta_0 \|_{L^\infty_{t,x}} \| u_0 \|_{L^{\infty}_{x}} \\
&\leq \| u_0 \|_{L^{\infty}_{x}} + \frac{5}{32} \| u_0 \|_{L^{\infty}_{x}} \leq 2 \|u_0\|_{L^\infty_x} = R.
\end{split}
\end{equation}

 With the boundedness of $u^2\in (L^\infty([0,\tau]\times\R^2))^2$, we continue by showing the existence of the fixed point of $T_{u^2}$.

\medskip

\noindent\textbf{Step 2 (Fixed point of $T_{u^2}$)}: For $p \in L^\infty([0,\tau]\times \R^2)$, consider
\begin{equation}\label{integralform2}
T_{u^2}p(t,x) = G_\alpha(t)\theta_0 - \int_0^t \nabla G_\alpha(t-s)\cdot (pu^2)(s) \, ds.
\end{equation}
Using an argument similar to the work in Step 1, we find that for $p$ and $\bar{p}$ in $B_R$,
\begin{equation}\label{A2}
\begin{split}
\|T_{u^2} p - T_{u^2} \bar{p} \|_{L^{\infty}_{t,x}}
&\leq \frac{2\alpha}{2\alpha-1}C\tau^{1-\frac{1}{2\alpha}} \|p-\bar{p}\|_{L^\infty_{t,x}}\|u^2\|_{L^\infty_{t,x}}.
\end{split}
\end{equation}

Substituting (\ref{velA22}) into (\ref{A2}) yields
\begin{equation*}
\begin{split}
\| T_{u^2} p - T_{u^2} \bar{p} \|_{L^{\infty}_{t,x}} &\leq 2\frac{2\alpha}{2\alpha-1} C\tau^{1-\frac{1}{2\alpha}} \|p-\bar{p}\|_{L^\infty_{t,x}} \|u_0\|_{L^\infty_{x}}.
\end{split}
\end{equation*}
Again, applying the size condition on $\tau$ in (\ref{Tcondition}), we conclude that
\begin{equation} \label{ln:estimate-on-contraction-with-u-2}
\begin{split}
 \| T_{u^2} p - T_{u^2} \bar{p} \|_{L^{\infty}_{t,x}} &\leq \frac{1}{4} \| p-\bar{p}\|_{L^\infty_{t,x}}.
 \end{split}
 \end{equation}
 As before, we now show $T_{u^2}$ maps $B_R$ into $B_R$. Let $f\in B_R$ and utilize (\ref{ln:estimate-on-contraction-with-u-2}) and Young's inequality to write
 \begin{equation*}
 \begin{split}
     &\|T_{u^2} f\|_{L_{t,x}^\infty} \leq \|T_{u^2} f - T_{u^2} 0\|_{L_{t,x}^\infty} + \|T_{u^2} 0 \|_{L_{t,x}^\infty} \leq \frac{1}{4} \|f-0\|_{L_{t,x}^\infty} + \|G_\alpha(t)\theta_0\|_{L_{t,x}^\infty} \\
     &\qquad \leq \frac{1}{4} \|f\|_{L_{t,x}^\infty} + \sup_{t\in [0,\tau]} \|g_\alpha(t)\|_{L^1_x} \|\theta_0\|_{L_x^\infty} \leq R.
     \end{split}
 \end{equation*}
Thus $T_{u^2}$ has a fixed point, call it $\theta^{2+1} = \theta^3$.

\medskip

\noindent\textbf{Step 3 (General case)}: For the inductive step, fix $n\in\N$ and suppose $\|\theta^m\|_{L^\infty_{t,x}} \leq 2\|\theta_0\|_{L^\infty_x}$ for every $m\leq n$. We compute $\theta^{n+1}$ by showing the existence of a fixed point of the map defined on $B_R$ given by
\begin{equation}
    T_{u^n}p = G_\alpha(t)\theta_0 - \int_0^t \nabla G_\alpha(t-s)\cdot(p u^n )(s) \, ds.
\end{equation}

The dependence of $T_{u^n}$ on $u^n$ prompts us to first estimate the size of $u^m$. To that effect, observe that for $m\leq n$,
\begin{equation*}
\begin{split}
\| u^m \|_{L^{\infty}_{t,x}} &= 
\left\|G_\alpha(t)u_0 - \int_0^t (K\ast \nabla G_\alpha(t-s)) \cdot (\theta^m u^{m-1}) (s) \, ds \right\|_{L_{t,x}^\infty} \\
&\leq 
\|G_\alpha(t)u_0\|_{L_{t,x}^\infty} + \left \|\int_0^t (K\ast \nabla G_\alpha(t-s)) \cdot (\theta^m u^{m-1}) (s)\,  ds \right\|_{L_{t,x}^\infty} \\
&\leq 
\|u_0\|_{L_{x}^\infty} + \frac{2\alpha}{2\alpha-1}C\tau^{1-\frac{1}{2\alpha}} \|\theta^m\|_{L^\infty_{t,x}} \|u^{m-1}\|_{L^\infty_{t,x}}, \\
\end{split}
\end{equation*}
where we have used our estimate on the integral of $K\ast \nabla G$ from (\ref{ln:size-of-integral-K-nabla-G}) in the second inequality. We can then apply the induction hypothesis $\| \theta^m \|_{L^{\infty}_{t,x}}\leq 2\| \theta_0 \|_{L^{\infty}_{x}}$ coupled with the size condition on $\tau$ in (\ref{Tcondition}) to conclude that 
\begin{equation}\label{velounifbound}
\begin{split}
\| u^m \|_{L^{\infty}_{t,x}}&\leq \| u_0 \|_{L^{\infty}_x} + \frac{2\alpha}{2\alpha-1}C\tau^{1-\frac{1}{2\alpha}} \| \theta^m \|_{L^{\infty}_{t,x}}\| u^{m-1} \|_{L^{\infty}_{t,x}}\\
&\leq \| u_0 \|_{L^{\infty}_x} + 2\frac{2\alpha}{2\alpha-1}C\tau^{1-\frac{1}{2\alpha}} \|\theta_0\|_{L^\infty_{x}}\| u^{m-1} \|_{L^{\infty}_{t,x}}  \\
&\leq \| u_0 \|_{L^{\infty}_x} + \frac{1}{4}\| u^{m-1} \|_{L^{\infty}_{t,x}}.
\end{split}
\end{equation}
Thus, for each $m\leq n$,
\begin{equation*}
\begin{split}
&\| u^m \|_{L^{\infty}_{t,x}} \leq \| u_0 \|_{L^{\infty}_x} + \frac{1}{4}\| u^{m-1} \|_{L^{\infty}_{t,x}}\\
&\qquad \leq \| u_0 \|_{L^{\infty}_x} + \frac{1}{4}\left(\| u_0 \|_{L^{\infty}_x}+\frac{1}{4}\| u^{m-2} \|_{L^{\infty}_{t,x}}\right)\\
&\qquad \leq \| u_0 \|_{L^{\infty}_x} + \frac{1}{4}\left(\| u_0 \|_{L^{\infty}_x}+\frac{1}{4}\left(\| u_0 \|_{L^{\infty}_x} +\frac{1}{4}\| u^{m-3} \|_{L^{\infty}_{t,x}}\right)\right)\\
&\qquad = \left(1+\frac{1}{4} +\left(\frac{1}{4}\right)^2\right)\| u_0 \|_{L^{\infty}_x} + \left(\frac{1}{4}\right)^{3} \| u^{m-3} \|_{L^{\infty}_{t,x}}.
\end{split}
\end{equation*}
Continuing in this way, we find that for every $m\leq n$,
\begin{equation} \label{ln:size-of-u-m}
\begin{split}
&\| u^m \|_{L^{\infty}_{t,x}}  \leq \sum_{k=0}^{m-2}\left(\frac{1}{4}\right)^k \| u^{0} \|_{L^{\infty}_{x}} + \left(\frac{1}{4}\right)^{m-1} \| u^{1} \|_{L^{\infty}_{t,x}} \\
&= \sum_{k=0}^{m-1}\left(\frac{1}{4}\right)^k  \| u^{1} \|_{L^{\infty}_{t,x}} \leq \frac{4}{3}\| u^{1} \|_{L^{\infty}_{t,x}} = \frac{4}{3}\|u_0\|_{L^\infty_x} \leq 2\|u_0\|_{L^\infty_{x}}.
\end{split}
\end{equation}

We now show that $T_{u^n}$ is a contraction map. Suppose $p,\bar{p}\in B_R$.  Invoking the integral estimate in (\ref{ln:size-of-integral-nabla-G}), the estimate on $u^n$ in (\ref{ln:size-of-u-m}), and the condition on $\tau$ in (\ref{Tcondition}) successively,
\begin{equation*}
\begin{split}
     &\|T_{u^n} p - T_{u^n} \bar{p}\|_{L^\infty_{t,x}} = \left\|\int_{0}^t \nabla G_\alpha(t-s)\cdot((p-\bar{p})u^n ) ds \right\|_{L^\infty_{t,x}} \\ 
     &\leq \frac{2\alpha}{2\alpha-1}C \tau^{1-\frac{1}{2\alpha}}  \|p - \bar{p}\|_{L^\infty_{t,x}}\|u^n\|_{L^\infty_{t,x}} \leq 2\frac{2\alpha}{2\alpha-1}C \tau^{1-\frac{1}{2\alpha}} \|p - \bar{p}\|_{L^\infty_{t,x}}\|u_0\|_{L^\infty_{x}}\\ 
     &\qquad \leq \frac{1}{4}\|p-\bar{p}\|_{L^\infty_{t,x}}.
\end{split}
\end{equation*} 

Moreover, observe that $T_{u^n} p\in B_R$ whenever $p\in B_R$, as
\begin{equation} 
\begin{split}
    \|T_{u^n} p\|_{L^\infty_{t,x}} &\leq \|T_{u^n} p - T_{u^n} 0\|_{L^\infty_{t,x}} + \|T_{u^n} 0\|_{L^\infty_{t,x}} \\
    & \leq \frac{1}{4}\|p\|_{L^\infty_{t,x}} + \|\theta_0\|_\infty \leq 2\|\theta_0\|_{L^\infty_{x}} = R.
    \end{split}
\end{equation}
  Thus, $T_{u^n}$ has a fixed point $\theta^{n+1}$ and we conclude by induction that 
  \begin{equation}\label{scalarunif}
  \|\theta^n\|_{L^\infty_{t,x}} \leq 2\|\theta_0\|_{L^\infty_x}   
  \end{equation}
  and \begin{equation} \label{ln:size-of-u-n}
        \|u^n\|_{L^\infty_{t,x}} \leq 2\|u_0\|_{L^\infty_x}  
  \end{equation}
  for all $n>1$.
\medskip

\textbf{Passing to the limit.} We show that the sequences $\{\theta^n\}$ and $\{u^n\}$ are Cauchy. Indeed, we have
\begin{align*}
    \theta^{n + 1} - \theta^n
        &= \int_0^t \grad G_\al(t - s)
            \cdot (\theta^n u^{n - 1} - \theta^{n + 1} u^n)
            \, ds \\
        &= \int_0^t \grad G_\al(t - s)
            \cdot \theta^n (u^{n - 1} - u^n)
            \, ds
            +  \int_0^t \grad G_\al(t - s)
            \cdot (\theta^n - \theta^{n + 1}) u^n
            \, ds, \\
    u^{n + 1} - u^n
        &= \int_0^t (K * \grad G_\al(t - s))
            \cdot (\theta^n u^{n - 1} - \theta^{n + 1} u^n)
            \, ds \\
        &= \int_0^t (K * \grad G_\al(t - s))
            \cdot \theta^n (u^{n - 1} - u^n)
            \, ds \\
            &\qquad
            +  \int_0^t (K * \grad G_\al(t - s))
            \cdot (\theta^n - \theta^{n + 1}) u^n
            \, ds.
\end{align*}
Using the integral bounds in \eqref{ln:size-of-integral-nabla-G} and \eqref{ln:size-of-integral-K-nabla-G}, one has the estimate,
\begin{align*}
    &\norm{\theta^{n + 1} - \theta^n}_{L^\iny_{t, x}}
            + \norm{u^{n + 1} - u^n}_{L^\iny_{t, x}} \\
        &\qquad
        \le
        2 \left( \norm{u_0}_{L^\iny} + \norm{\theta_0}_{L^\iny} \right)
            C\frac{2\alpha}{2\alpha-1} \tau^{1-\frac{1}{2\alpha}} 
           \left( \norm{\theta^{n + 1} - \theta^n}_{L^\iny_{t, x}}
            + \norm{u^n - u^{n - 1}}_{L^\iny_{t, x}}\right) .
\end{align*}
By invoking the size condition on $\tau$ in \eqref{Tcondition}, we can write
\begin{align}\label{unifconv}
    &\norm{\theta^{n + 1} - \theta^n}_{L^\iny_{t, x}}
            + \norm{u^{n + 1} - u^n}_{L^\iny_{t, x}}
        \le
        \frac{1}{4}
           \left(\norm{\theta^{n + 1} - \theta^n}_{L^\iny_{t, x}}
            + \norm{u^n - u^{n - 1}}_{L^\iny_{t, x}}\right) .
\end{align}
Thus
\begin{align*}
    &\norm{u^{n + 1} - u^n}_{L^\iny_{t, x}}
        \le
        \frac{1}{4}
           \norm{u^n - u^{n - 1}}_{L^\iny_{t, x}},
\end{align*}
from which it follows that $\{u^n\}$ is Cauchy and converges to $u$ in $L^\iny((0, \tau) \times \R^2)$. 

It also follows from (\ref{unifconv}) that
\begin{align*}
    \frac{3}{4} \norm{\theta^{n + 1} - \theta^n}_{L^\iny_{t, x}}
        &\le \frac{1}{4} \norm{u^n - u^{n - 1}}_{L^\iny_{t, x}},
\end{align*}
so $\{\theta^n\}$ is Cauchy and converges to $\theta$ in $L^\iny((0, \tau) \times \R^2)$.

Finally, the above observations imply that
\begin{equation}\label{max-principle-on-theta}
\|\theta\|_{L^\infty_{t,x}} \leq 2\|\theta_0\|_{L^\infty_x}
\end{equation}
and
\begin{equation}\label{max-principle-on-u}
\|u\|_{L^\infty_{t,x}} \leq 2\|u_0\|_{L^\infty_x}.
\end{equation}
\noindent \textbf{The limit $(u, \theta)$ is a mild solution}. 
We have
\begin{align*}
     u(t,x)
        - &G_\alpha(t)u_0(x)  + \int_0^t (K \ast\nabla G_\alpha(t-s)) \cdot (u\theta)(s, x) \, ds \\
        &=  u(t, x) - u^{n+1}(t,x)
            + \int_0^t (K \ast\nabla G_\alpha(t-s)) \cdot
                ((u \theta)(s) - (u^n\theta^{n+1})(s, x) \, ds.
\end{align*}
Because $u_n \to u$ and $\theta_n \to \theta$ in $L^\iny((0, \tau) \times \R^2)$ and are bounded in that same space, we have,
\begin{align*}
    \abs{((u \theta)(s) - (u^n\theta^{n+1})(s, x)}
        &\le C \left( \norm{u - u^n}_{L^\iny_{t, x}}
            + \norm{\theta - \theta^{n+1}}_{L^\iny_{t, x}}\right)
        \to 0 \text{ as } n \to \iny.
\end{align*}
It follows that
\begin{align*}
    u(t,x)
        - &G_\alpha(t)u_0(x)  + \int_0^t (K \ast\nabla G_\alpha(t-s)) \cdot (u\theta)(s, x) \, ds
        = 0.
\end{align*}
With the parallel argument for $\theta$, we see that $(u, \theta)$ is a mild solution to \cref{e:SQG} as in \cref{D:mildsolution}.

\noindent\textbf{Uniqueness}.
For uniqueness, suppose $(\theta, u)$ and $(\tilde{\theta},\tilde{u})$ are two mild solutions as in \eqref{D:mildsolution}.
We can then write
\begin{equation}\label{ln:uniqueness-argument}
\begin{split}
    \| \theta - \tilde{\theta}\|_{L^\infty_{t,x}} &+ \| u- \tilde{u}\|_{L^\infty_{t,x}} \\ 
    &= \left\| \int_0^t \nabla G_\alpha(t-s)\cdot (\theta u - \tilde{\theta}\tilde{u}) ds \right\|_{L^\infty_{t,x}} +\left\| \int_0^t K\ast \nabla G_\alpha(t-s)\cdot (\theta u - \tilde{\theta}\tilde{u}) ds \right\|_{L^\infty_{t,x}} \\
    &\leq \frac{4\alpha}{2\alpha-1}C\tau^{1-\frac{1}{2\alpha}} \| \theta u - \tilde{\theta}\tilde{u}\|_{L^
    \infty_{t,x}} = \frac{4\alpha}{2\alpha-1}C\tau^{1-\frac{1}{2\alpha}} \| \theta u -\tilde{\theta}u +\tilde{\theta}u - \tilde{\theta}\tilde{u}\|_{L^
    \infty_{t,x}} \\
    &\leq \frac{4\alpha}{2\alpha-1}C\tau^{1-\frac{1}{2\alpha}} \left( \|u\|_{L^\infty_{t,x}} \|\theta-\tilde{\theta}\|_{L^\infty_{t,x}} + \|\tilde{\theta}\|_{L^\infty_{t,x}} \|u-\tilde{u}\|_{L^\infty_{t,x}} \right).
\end{split}
\end{equation}
We apply the uniform bounds on $\tilde{\theta}$ in \eqref{max-principle-on-theta} and $u$ in \eqref{max-principle-on-u}, and the constraint on $\tau$ in \eqref{Tcondition}.  We conclude that 
\begin{equation} \label{ln:smallness-of-theta-u}
\begin{split}
     \|\theta - \tilde{\theta}\|_{L^\infty_{t,x}} &+ \| u- \tilde{u}\|_{L^\infty_{t,x}}\\
    &\leq  \frac{8\alpha}{2\alpha-1}C\tau^{1-\frac{1}{2\alpha}} \left( \|u_0\|_{L^\infty_{t,x}} \|\theta-\tilde{\theta}\|_{L^\infty_{t,x}} + \|\theta_0\|_{L^\infty_{t,x}} \|u-\tilde{u}\|_{L^\infty_{t,x}} \right) \\
    &\leq \frac{1}{2}\left(  \|\theta-\tilde{\theta}\|_{L^\infty_{t,x}} +   \|u-\tilde{u}\|_{L^\infty_{t,x}} \right).
\end{split}
\end{equation}
Thus, $\theta = \tilde{\theta}$ and $u=\tilde{u}$.  We conclude that $(\theta,u)$ is the unique mild solution on $[0,\tau]$. 
With a simple application of \cref{prop:constitutive-law-holds}, for all $t\in [0,T]$, we have $u(t) = \PV K \ast \theta(t)$. Lastly, the stated continuity properties are a consequence of \cref{P:time-regularity}.


\section{Spatial Regularity of the Solution}\label{sec:regularity}

\noindent In this section we establish the spatial regularity of short-time solutions as is stated in \cref{thm:regularity-of-theta-and-u}. 
 Going forward, we set $D^\gamma = \frac{\partial^{\gamma_1}}{\partial x^{1}}\frac{\partial^{\gamma_2}}{\partial x^{2}}$ for $\gamma \in \N^2$, and we let $D$ denote the partial derivative with respect to either $x_1$ or $x_2$.
\begin{proof}[\textbf{Proof of \cref{thm:regularity-of-theta-and-u}}] \label{proof:regularity-of-theta-and-u}
The argument is similar to that in \cite{Wu}. We will  manipulate (\ref{SQGintegral}) formally by taking spatial derivatives to obtain a map in terms of the derivatives of $u$ and $\theta$. We can then apply a Banach fixed point argument to produce a solution. Our argument will use induction on the number of derivatives of $u$ and $\theta$. 
\subsection{Existence of first derivatives of $\theta$ and $u$}\label{first}
We start by noting that if $D\theta$ and $Du$ exist, then they must satisfy
\begin{equation} \label{ln:first-derivative}
\begin{split}
        D\theta &= G_\alpha(t) D\theta_0 - \int_0^t \nabla G_\alpha(t-s) \cdot\left( (D\theta) u + \theta Du\right) ds, \\
    D u &=  G_\alpha(t) Du_0 - \int_0^t (K\ast \nabla G_\alpha(t-s))\cdot \left( (D\theta) u + \theta Du\right) ds.
\end{split}
\end{equation}
Our strategy is to show that the operator defined by the right hand side of (\ref{ln:first-derivative}) has a fixed point. By uniqueness, the fixed point will correspond to our derivatives $D\theta$ and $Du$. To show that (\ref{ln:first-derivative}) has a fixed point, we apply an argument similar to that used to show (\ref{SQGintegral}) has a fixed point. 

Let $\theta_0 \in C^1_b(\R^2)$ and $u_0 \in (C^1_b(\R^2))^2$.  We will show that the sequence $(\theta_x^n,u_x^n)$ generated by
\begin{equation}
\begin{split}
    \theta_x^1(t,x) = D\theta_0(x), \\
    u_x^1(t,x) = Du_0(x),
\end{split}
\end{equation}
and 
\begin{equation}\label{ln:first-derivative-scheme}
\begin{split}
&{\theta}_x^{n+1}(t,x)  =G_\alpha(t)D\theta_0(x)-\int_0^t \nabla G_\alpha(t-s) \cdot (\theta u_x^{n} + \theta^{n+1}_x u)(s) \, ds,\\
&u_x^{n+1}(t,x) = G_\alpha(t)Du_0(x)  - \int_0^t K \ast\nabla G_\alpha(t-s) \cdot (\theta u^n_x + \theta_x^{n+1} u)(s) \, ds 
\end{split}
\end{equation}
converges to the desired fixed point $(D\theta , Du)$. 
(That is, we will find that the sequence $\theta^{n}_{x}$ converges to a limit
as $n\rightarrow\infty.$  We have previously shown that $\theta$ exists, and
we will call this limit $\theta_{x}.$ However, that will leave a step still to 
establish, which is to show that this $\theta_{x}$ is actually the derivative of
our $\theta$ with respect to $x.$  And, of course, we establish the corresponding
results for $u$ as well.)
We first construct solutions $\theta_x$ and $u_x$ on $[0,\tau]$ with initial data ($\theta_0,u_0$), where $\tau$ is the same as in \eqref{Tcondition}, satisfying
\begin{equation}\label{Tcondition-2}
\frac{2\alpha}{2\alpha-1} C\tau^{1-\frac{1}{2\alpha}} (\| u_0 \|_{L^{\infty}_x} + \| {\theta}_0 \|_{L^{\infty}_x}) \leq 1/8.
\end{equation}
 Set $R= 2\max\{\|\theta_0\|_{C^1_b}, \|u_0\|_{C^1_b}\}$ and $B_R = \{f\in L^\infty([0,\tau]\times \R^2) : \|f\|_{L^\infty_{t,x}} \leq R\}$. We will use an inductive argument to posit the existence and boundedness of the sequence ($\theta^n_x, u^n_x$). For the base case, it is obvious that $\|\theta_x^1\|_{L^\infty_{t,x}} \leq R$ and $\|u_x^1\|_{L^\infty_{t,x}} \leq R$. For the inductive step, suppose that for all $0<t<\tau$, $\theta_x^{n}$ and $ u_x^{n}$ satisfy (\ref{ln:first-derivative-scheme}) with the bounds 
\begin{equation} \label{ln:first-derivative-induction-hypothesis}
    \|\theta_x^n\|_{L^\infty_{t,x}} \leq R \text{   and 
  } \|u_x^n\|_{L^\infty_{t,x}} \leq R.
\end{equation}
We aim to show the existence of $\theta_x^{n+1}$ and $u_x^{n+1}$ satisfying (\ref{ln:first-derivative-scheme}). Using a Banach fixed point argument, assign the maps
\begin{equation*}
T'_\omega p  = DG_\alpha(t)\theta_0-\int_0^t \nabla G_\alpha(t-s) \cdot (\theta\omega + pu) \, ds,    
\end{equation*}
and 
\begin{equation*}
    U'_p \omega= DG_\alpha(t)u_0  - \int_0^t K \ast\nabla G_\alpha(t-s) \cdot (\theta\omega + pu) \, ds,
\end{equation*}
and let $p,\bar{p} \in B_R$.
We apply our uniform bound on $u$ in (\ref{max-principle-on-u}) and the size condition on $\tau$ in (\ref{Tcondition-2}) to yield
\begin{equation} \label{ln:contraction-map-of-derivative}
\begin{split}
    &\|T'_{u_x^{n}} (p-\bar{p})\|_{L^\infty_{t,x}} = \left\| \int_0^t \nabla G_\alpha(t-s) \cdot (u(p-\bar{p})) \, ds \right\|_{L^\infty_{t,x}} \\
    &\qquad \leq 2\frac{2\alpha}{2\alpha-1}  C \tau^{1-\frac{1}{2\alpha}} \|u_0\|_{L^\infty_x} \|p-\bar{p}\|_{L^\infty_{t,x}}\leq \frac{1}{4} \|p-\bar{p}\|_{L^\infty_{t,x}}.
\end{split}
\end{equation}
To see that $T'_{u_x^{n}}$ maps $B_R$ into $B_R$, we use (\ref{ln:contraction-map-of-derivative}), the integral estimate (\ref{ln:size-of-integral-nabla-G}), and the boundedness of $\theta_0$ in $C^1_b$ to write
\begin{equation*}
\begin{split} 
      \|T'_{u^n_x}f\|_{L^\infty_{t,x}} &\leq   \|T'_{u^n_x}f- T'_{u^n_x}0\|_{L^\infty_{t,x}} + \|T'_{u^n_x} 0 \|_{L^\infty_{t,x}} \\
        &\leq \frac{1}{4}  \|p\|_{L^\infty_{t,x}} + \left\|G_\alpha(t)D\theta_0 - \int_0^t \nabla G_\alpha(t-s) (u^n_x \theta) ds  \right\|_{L^\infty_{t,x}} \\
        & \leq \frac{1}{4} \|p\|_{L^\infty_{t,x}} + \|G_\alpha(t)D\theta_0\|_{L^\infty_{t,x}} + \left \| \int_0^t \nabla G_\alpha(t-s) (u^n_x \theta) ds  \right\|_{L^\infty_{t,x}} \\
         &\leq \frac{1}{4} \|p\|_{L^\infty_{t,x}} + \|\theta_0\|_{C^1_b} + \frac{2\alpha}{2\alpha-1}C\tau^{1-\frac{1}{2\alpha}} \|u_x^n \|_{L^\infty_{t,x}} \|\theta\|_{L^\infty_{t,x}}.
\end{split}
\end{equation*}
The uniform bound on $\theta$ in (\ref{max-principle-on-theta}) and (\ref{Tcondition-2}) gives 
\begin{equation*}
\begin{split}                 \|T'_{u^n_x}f\|_{L^\infty_{t,x}} 
        &\leq \frac{1}{4} \|f\|_{L^\infty_{t,x}} + \|\theta_0\|_{C^1_b} + 2\frac{2\alpha}{2\alpha-1} C\tau^{1-\frac{1}{2\alpha}} \|u_x^n \|_{L^\infty_{t,x}} \|\theta_0\|_{L^\infty_{t,x}} \\
        &\leq  \frac{1}{4} \|f\|_{L^\infty_{t,x}} + \|\theta_0\|_{C^1_b} + \frac{1}{4}\|u_x^n \|_{L^\infty_{t,x}}.
\end{split}
\end{equation*}
By our induction hypothesis (\ref{ln:first-derivative-induction-hypothesis}),
\begin{equation}\label{ln:boundedness-of-map-of-derivative-1}
\begin{split}
    \|T'_{u^n_x}f\|_{L^\infty_{t,x}} &\leq \frac{1}{4}R + \|\theta_0\|_{C^1_b} + \frac{1}{4} R \leq R.
\end{split}
\end{equation}
We therefore have shown the existence of $\theta_x^{n+1}$ satisfying $(\ref{ln:first-derivative-scheme})_1$ and (\ref{ln:first-derivative-induction-hypothesis}). We apply a similar argument on $U'_{\theta_x^{n+1}}$ to yield the existence of $u_x^{n+1}$ satisfying $(\ref{ln:first-derivative-scheme})_2$. Note also that, thanks to the integral estimate in (\ref{ln:size-of-integral-K-nabla-G}), the condition on $\tau$ given by (\ref{Tcondition-2}), the uniform bounds on $\theta$ and $u$ given by (\ref{max-principle-on-theta}) and (\ref{max-principle-on-u}) respectively, and our induction hypothesis in (\ref{ln:first-derivative-induction-hypothesis}), we derive the following inequalities.

\begin{equation}\label{ln:boundedness-of-map-of-derivative-2} 
\begin{split}
    \|u_x^{n+1}\|_{L^\infty_{t,x}} &= \left\|  G_\alpha(t)Du_0(x)  - \int_0^t K \ast\nabla G_\alpha(t-s) \cdot (u^n_x\theta + \theta u_x^{n+1})(s) \, ds  \right \|_{L^\infty_{t,x}} \\
    &\leq \|G_\alpha(t)Du_0(x) \|_{L^\infty_{t,x}} + \left\|\int_0^t K \ast\nabla G_\alpha(t-s) \cdot (u^n_x\theta + \theta u_x^{n+1})(s) \, ds  \right\|_{L^\infty_{t,x}} \\
    &\leq \|u_0 \|_{C^1_b} + \frac{2\alpha}{2\alpha-1}C \tau^{1-\frac{1}{2\alpha}} \left( \|u_x^n\|_{L^\infty_{t,x}} \|\theta\|_{L^\infty_{t,x}} 
 + \|u\|_{L^\infty_{t,x}} \|\theta_x^{n+1}\|_{L^\infty_{t,x}} \right) \\
  &\leq \|u_0 \|_{C^1_b}  + \frac{1}{4}\|u_x^n\|_{L^\infty_{t,x}} + \frac{1}{4} \|\theta_x^{n+1}\|_{L^\infty_{t,x}}  \leq R.
\end{split}
\end{equation}

Hence, we can generate the sequences $\{\theta_x^n\}$ and $\{u_x^n\}$, which by construction possess weak-$\star$ limits $\theta_x$ and $u_x$, respectively, in $L^{\infty}_{t,x}$. 

To finalize the proof of the existence of $D\theta$ and $Du$, we show that $\theta_x$ and $u_x$ satisfy (\ref{ln:first-derivative}). To this end, note that by an argument identical to that leading to (\ref{unifconv}), we have 
\begin{equation}\label{Dunifconv}
\| \theta^{n+1}_x-\theta^{n}_x\|_{L^{\infty}_{t,x}} +\| u^{n+1}_x-u^{n}_x\|_{L^{\infty}_{t,x}} \leq \frac{1}{4}(\| \theta^{n+1}_x-\theta^{n}_x\|_{L^{\infty}_{t,x}} +\| u^{n}_x-u^{n-1}_x\|_{L^{\infty}_{t,x}} ),
\end{equation}
from which we conclude, as in Section \ref{existence}, that
\begin{align*}
    &\norm{u_x^{n + 1} - u_x^n}_{L^\iny_{t, x}}
        \le
        \frac{1}{4}
           \norm{u_x^n - u_x^{n - 1}}_{L^\iny_{t, x}}.
\end{align*}
Thus, $\{u_x^n\}$ is Cauchy and converges to $u_x$ in $L^\iny((0, \tau) \times \R^2)$. 

It also follows from (\ref{Dunifconv}) that
\begin{align*}
    \frac{3}{4} \norm{\theta_x^{n + 1} - \theta_x^n}_{L^\iny_{t, x}}
        &\le \frac{1}{4} \norm{u_x^n - u_x^{n - 1}}_{L^\iny_{t, x}},
\end{align*}
so $\{\theta_x^n\}$ is Cauchy and converges to $\theta_x$ in $L^\iny((0, \tau) \times \R^2)$.

Let $Du$ satisfy $Du = U'_{\theta_x} Du$, and let $D\theta$ satisfy $T'_{Du} D\theta = D\theta$. We omit the proof of the existence of $Du$ and $D\theta$, but their existence and the bounds
\begin{equation} \label{ln:Dtheta-Du-maximum-principle}
\|D\theta\|_{L^\infty_{t,x}} \leq R \text{ 
  and   }\|Du\|_{L^\infty_{t,x}} \leq R ,    
\end{equation}
are readily checked with an analogous argument given by the previous considerations in (\ref{ln:contraction-map-of-derivative}), (\ref{ln:boundedness-of-map-of-derivative-1}) and (\ref{ln:boundedness-of-map-of-derivative-2}). 

To see that $\theta_x=D\theta$, we show that $\theta_x$ also satisfies $T'_{Du} \theta_x = \theta_x$. We write
\begin{equation*}
\begin{split}
&\theta_x - DG(t)\theta_0 + \int_0^t \nabla G_{\alpha}(t-s)\cdot(\theta_x u + \theta u_x) \, ds  \\
&\qquad = \theta_x - \theta^n_x + \int_0^t \nabla G_{\alpha}(t-s)\cdot((\theta_x u + \theta u_x) - (\theta^n_xu + \theta u^{n-1}_x)) \, ds.  
\end{split}
\end{equation*}
As $u_x^n \to u_x$ and $\theta_x^n \to \theta_x$ in $L^\iny((0, T) \times \R^2)$ and are bounded in that same space, we have,
\begin{align*}
    \abs{(\theta_x u + \theta u_x) - (\theta^n_xu + \theta u^{n-1}_x)}
        &\le C \left( \norm{\theta_x - \theta_x^n}_{L^\iny_{t, x}}
            + \norm{u_x - u_x^{n-1}}_{L^\iny_{t, x}}\right)
        \to 0 \text{ as } n \to \iny.
\end{align*}
It follows that
\begin{align*}
    \theta_x(t,x)
        - & DG_\alpha(t)\theta_0(x)  + \int_0^t (K \ast\nabla G_\alpha(t-s)) \cdot (\theta_xu +\theta u_x)(s, x) \, ds
        = 0,
\end{align*}
and we have $T'_{Du} \theta_x = \theta_x$.

A similar argument shows that $U'_{D\theta} u_x = u_x$.

We conclude by the uniqueness of the fixed point in the Banach contraction mapping theorem that $\theta_x = D\theta$ and $u_x=Du$. Thus the first order derivatives of $u$ and $\theta$ exist and satisfy (\ref{ln:first-derivative}).

\subsection{Existence of higher derivatives of $\theta$ and $u$}
To establish existence of higher derivatives, we once more use induction. Fix $k\in\N$ and set $R = 2\max\{\|\theta_0\|_{C^{k}_b},\|u_0\|_{C^{k}_b}\}$. The base case is shown in the previous section. For the inductive step, suppose that $D^\beta \theta$ and $D^\beta u$ exist for all $\beta\in \N^2$ satisfying $|\beta| <k$ with the bound
\begin{equation*}
    \|\theta\|_{C^{k-1}_b} \leq R \text{  and  }\|u\|_{C^{k-1}_b} \leq R. 
\end{equation*}

Our objective is to show the existence of $D^\gamma \theta$ and $D^\gamma u$ for $\gamma \in \N^2$ with $|\gamma|=k$. First note that if such derivatives were to exist, then, by the Leibniz rule, they would satisfy 
 \begin{equation} \label{eq:deriviatives-of-thetan}
     D^\gamma \theta = G_\alpha(t)D^\gamma \theta_0 - \int_0^t \nabla G_\alpha(t-s)\cdot\left[\sum_{0\leq\beta \leq\gamma} \binom{\gamma}{\beta}D^\beta \theta D^{\gamma-\beta} u\right]  ds,
 \end{equation}
 and
 \begin{equation}\label{eq:derivatives-of-un}
     D^\gamma u  = G_\alpha(t)D^\gamma u_0  - \int_0^t (K\ast \nabla G_\alpha(t-s))\cdot\left[\sum_{0\leq\beta\leq\gamma}  \binom{\gamma}{\beta}D^\beta \theta D^{\gamma-\beta} u\right] ds.
 \end{equation}

As in the previous subsection, we construct sequences $\{u^n_{\gamma}\}$ and $\{{\theta}^n_{\gamma}\}$ via an iterative scheme and show that the sequences converge to $D^\gamma u$ and $D^\gamma \theta$, respectively. First let
\begin{equation*}
    \theta_\gamma^1 (t,x) \equiv D^\gamma \theta_0(x) \text{  and  } u_\gamma^1 (t,x) \equiv D^\gamma u_0,
\end{equation*}
and, using a similar iterative scheme as in (\ref{ln:first-derivative-scheme}), set
\begin{equation} \label{ln: derivatives-iterative-scheme}
\begin{split}
    &\theta_{\gamma}^{n+1} =  G_\alpha(t)D^\gamma \theta_0 - \int_0^t \nabla G_\alpha(t-s)\cdot\left[\theta_{\gamma}^{n+1}u + \theta u_\gamma^n +\sum_{0<\beta <\gamma} \binom{\gamma}{\beta}D^\beta \theta D^{\gamma-\beta} u\right]  ds, \\
    &\text{ and } \\
    &u_\gamma^{n+1} =  G_\alpha(t)D^\gamma u_0 - \int_0^t (K\ast \nabla G_\alpha(t-s))\cdot\left[\theta_\gamma^{n+1}u + \theta u_\gamma^n +\sum_{0<\beta <\gamma} \binom{\gamma}{\beta}D^\beta \theta D^{\gamma-\beta} u\right]  ds.
\end{split}
\end{equation}
The argument that $\{\theta^{n}_{\gamma}\}$ converges uniformly to $D^{\gamma}\theta$ and that $\{u^n_{\gamma}\}$ converges uniformly to $D^{\gamma}u$ is virtually identical to the analogous argument for the first derivatives of $\theta$ and $u$ as given in Subsection \ref{first}, so we omit the details. Remarkably, the higher derivatives satisfy that following bounds.
\begin{equation}\label{ln:estimate-on-multiderivatives}
    \|D^\gamma\theta\|_{L^\infty_{t,x}} \leq R \text{  and  } \|D^\gamma u \|_{L^\infty_{t,x}} \leq R.    
\end{equation} 
This completes the proof of \cref{thm:regularity-of-theta-and-u}.
\end{proof}

We can also show that the first spatial derivatives of $\theta$ are continuous
in time. The proof follows as a direct variation of \cref{P:time-regularity}. 
\begin{lemma}\label{lem:continuity-in-time-of-derivative-of-theta}
        Let $\alpha>\frac{1}{2}$ and select $\theta_0 \in C_b^1(\R^2)$, $u_0 \in   C_b^1(\R^2))^2$ satisfying $u_0= \PV K\ast \theta_0$. Let $(\theta, u)$ be the mild solution given by \cref{thm:existence-of-solutions} which exists up to time $T$. For all $t < T$, we have $\partial_{x_i}\theta(t, x)$ is continuous in $t$ for all $t>0$ for $i=1,2$.
 \end{lemma}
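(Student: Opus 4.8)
The plan is to reproduce, almost verbatim, the argument for $\theta$ in the proof of \cref{P:time-regularity}\,(2), now applied to the first spatial derivative. By \cref{thm:regularity-of-theta-and-u} with $k=1$, the derivatives $D\theta$ and $Du$ exist, belong to $L^\infty([0,T]\times\R^2)$, and $D\theta$ satisfies the integral identity \eqref{ln:first-derivative}, which I would rewrite as
\[
    D\theta(t) = G_\alpha(t)\,D\theta_0 - \int_0^t \nabla G_\alpha(t-s)\cdot W(s)\,ds,\qquad W := (D\theta)\,u + \theta\,Du.
\]
Since $\theta, u, D\theta, Du$ are all bounded on $[0,T]\times\R^2$, the source term $W$ lies in $L^\infty([0,T]\times\R^2)$. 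Thus $W$ plays exactly the role that $\theta u$ played in \cref{P:time-regularity}, while $D\theta_0\in C^0_b(\R^2)$ plays the role of $\theta_0$, and the entire estimate transfers with only cosmetic changes.

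Concretely, I would fix $t\in(0,T)$, take $0<a<b<T$ near $t$, and split, as in \eqref{ln:continuity-in-time-theta-difference},
\[
\|D\theta(b)-D\theta(a)\|_{L^\infty_x} \le \|(G_\alpha(b)-G_\alpha(a))D\theta_0\|_{L^\infty_x} + [\mathrm{I}] + [\mathrm{II}],
\]
where $[\mathrm{I}]=\|\int_a^b \nabla G_\alpha(b-s)\cdot W(s)\,ds\|_{L^\infty_x}$ is the near-diagonal piece and $[\mathrm{II}]=\|\int_0^a(\nabla G_\alpha(b-s)-\nabla G_\alpha(a-s))\cdot W(s)\,ds\|_{L^\infty_x}$ is the off-diagonal piece. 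For $[\mathrm{I}]$, Young's inequality with \cref{lem:operator-L1-estimates} for $k=1$ gives a bound of order $\|W\|_{L^\infty_{t,x}}(b-a)^{1-\frac{1}{2\alpha}}$, which vanishes as $b\to a$ because $\alpha>1/2$. For $[\mathrm{II}]$, I would write the kernel difference via the Fundamental Theorem of Calculus as $\int_{a-s}^{b-s}\partial_\rho\nabla g_\alpha(\rho)\,d\rho = -\nu\int_{a-s}^{b-s}\Lambda^{2\alpha}\nabla g_\alpha(\rho)\,d\rho$ using \eqref{ln:frac-heat-property}, then apply the first bound of \cref{lem:lambda-nabla-frac-g-estimate}, namely $\|\Lambda^{2\alpha}\nabla g_\alpha(\rho)\|_{L^1}\le C\rho^{-(1+\frac{1}{2\alpha})}$, and perform the $\rho$- and $s$-integrations exactly as in the $\theta$ case to obtain a bound of order $\|W\|_{L^\infty_{t,x}}\big[(b-a)^{1-\frac{1}{2\alpha}}+b^{1-\frac{1}{2\alpha}}-a^{1-\frac{1}{2\alpha}}\big]$, which again tends to $0$ as $a,b\to t$.

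The first term is handled by the smoothing of the semigroup. Although $D\theta_0$ is only bounded and continuous, \cref{L:fgConvBound} and \cref{L:galKBesovBounds} show that $G_\alpha(a)D\theta_0\in C^\gamma(\R^2)$ for every $\gamma>0$ and is therefore uniformly continuous for each fixed $a>0$; writing $G_\alpha(b)=G_\alpha(b-a)G_\alpha(a)$ and invoking the approximate-identity argument of \eqref{uniformapprox} gives $\|(G_\alpha(b)-G_\alpha(a))D\theta_0\|_{L^\infty_x}\to 0$ as $b-a\to 0$. Combining the three estimates and letting $a,b\to t$ shows that $\partial_{x_i}\theta(\cdot,x)$ is continuous at $t$ for $i=1,2$. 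I do not expect a genuine obstacle here beyond the input \cref{thm:regularity-of-theta-and-u}: the only two points worth recording relative to \cref{P:time-regularity} are that $W\in L^\infty$ and that the heat smoothing still applies to the merely bounded datum $D\theta_0$; all remaining scaling bookkeeping is identical to that already carried out for $\theta$.
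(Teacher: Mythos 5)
Your proposal is correct and follows essentially the same route as the paper: the same three-term splitting as in \eqref{ln:continuity-in-time-theta-difference1} with $W=(D\theta)u+\theta\,Du$ in place of $\theta u$ (bounded via \cref{thm:regularity-of-theta-and-u}), the near-diagonal piece controlled by \cref{lem:operator-L1-estimates}, the off-diagonal piece by the Fundamental Theorem of Calculus, \eqref{ln:frac-heat-property}, and the first bound of \cref{lem:lambda-nabla-frac-g-estimate}, and the semigroup-difference term by the $C^\gamma$ smoothing and approximate-identity argument of \eqref{uniformapprox}. The paper's proof is stated as exactly this "direct variation of \cref{P:time-regularity}," so no gap remains.
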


\begin{proof}
    We first aim to bound the size of $\partial_{x_i} \theta(b,x) - \partial_{x_i} \theta(a,x)$ uniformly in $x$ for $a,b>0$.
    We can write
    \begin{equation}\label{ln:continuity-in-time-theta-difference1}
    \begin{split}
        &\|\partial_{x_i} \theta(b,x)-\partial_{x_i} \theta(a,x)\|_{L^\infty_x} \leq \left \| \left(G_\alpha(b)-G_\alpha(a)\right) \partial_{x_i} \theta_0\right \|_{L^\infty_x}\\
        & + \left\| \int_0^b \nabla G_\alpha(b-s)\cdot (\partial_{x_i} \theta u + \theta \partial_{x_i} u)(s,x) \, ds -  \int_0^a \nabla G_\alpha(a-s)\cdot (\partial_{x_i} \theta u + \theta \partial_{x_i} u))(s,x) \, ds\right\|_{L^\infty_x}\\
        &\leq \left \| \left(G_\alpha(b)-G_\alpha(a)\right) \partial_{x_i} \theta_0(x)\right \|_{L^\infty_x}+\left\| \int_a^b \nabla G_\alpha(b-s)\cdot(\partial_{x_i} \theta u + \theta \partial_{x_i} u) (s,x) \, ds\right\|_{L^\infty_x} \\
        &\qquad\qquad
        + \left\| \int_0^a ( \nabla G_\alpha(b-s)- \nabla G_\alpha(a-s))\cdot (\partial_{x_i} \theta u + \theta \partial_{x_i} u)\,ds\right\|_{L^\infty_x}.
    \end{split}
    \end{equation}
    For the second term on the right hand side of (\ref{ln:continuity-in-time-theta-difference1}), applying Young's inequality and \cref{lem:operator-L1-estimates} for $k=1$ gives the bound,
    \begin{equation*}
    \begin{split}
                &\left\| \int_a^b \nabla G_\alpha(b-s)\cdot(\partial_{x_i} \theta u + \theta \partial_{x_i} u) (s,x) \,ds\right\|_{L^\infty_x}
                \leq C\int_a^b (b-s)^{-\frac{1}{2\alpha}} \|(\partial_{x_i} \theta u + \theta \partial_{x_i} u)(s)\|_{L^\infty_{x}}\, ds\\
                &\qquad\qquad \leq  \frac{2\alpha}{2\alpha-1}C\|\partial_{x_i} \theta u + \theta \partial_{x_i} u\|_{L^\infty_{t,x}} (b-a)^{1-\frac{1}{2\alpha}}.
    \end{split}
    \end{equation*}
    Observe that $\|\partial_{x_i} \theta u + \theta \partial_{x_i} u\|_{L^\infty_{t,x}}$ is bounded as a consequence of \cref{thm:regularity-of-theta-and-u}. 
    Moreover, applying Young's inequality to the third term on the right hand side of (\ref{ln:continuity-in-time-theta-difference1}), we have
    \begin{equation*}
    \begin{split}
        \bigg\| \int_0^a &(\nabla G_\alpha(b-s)- \nabla G_\alpha(a-s))\cdot (\partial_{x_i} \theta u + \theta \partial_{x_i} u)(s,x) \, ds\bigg\|_{L^\infty_x} \\
        &\leq  \|\partial_{x_i} \theta u + \theta \partial_{x_i} u\|_{L^\infty_{t,x}}  \int_0^a \|\nabla g_\alpha(b-s) - \nabla g_\alpha(a-s)\|_{L^1_{x}} \, ds.
    \end{split}
    \end{equation*}
    The argument then follows similarly to that after line \eqref{ln:continuity-in-time-theta-uniformly-cont}.
\end{proof}

Lastly, we conclude with a proof of \cref{cor:solution-to-sqg}.

\section{Extending the Solution}\label{sec:globalSection}

\subsection{Improved bounds on $(\theta,u)$}
Having established the short time existence and regularity of $(\theta,u)$, we now extend the solution to all $t\in [0,\infty)$. In order to achieve an extension, we must establish the appropriate $L^{\infty}$ bounds for $(\theta,u)$; in particular, the current bounds $\|\theta\|_{L^\infty_{t,x}}\leq 2\|\theta_0\|_{L^\infty_{x}}$ and $\|u\|_{L^\infty_{t,x}} \leq 2 \|u_0\|_{L^\infty_{x}} $ only allow us to extend the solution up to some finite time. 

To begin, we restate a convolution-type Gr\"onwall inequality from \cite{webb2019weakly}.

\begin{lemma}[Volterra-Gr\"onwall Inequality (Theorem 3.2 of \cite{webb2019weakly})]\label{lem:volterra-gronwall}
Suppose $v(t)\in L^\infty([0,T])$ with $v(t)\geq 0$ for all $t\in [0,T]$, and let $a\geq 0, b>0$, and $0<\gamma<1$ be constants. If $v(t)$ satisfies the inequality
\begin{equation*}
    v(t) \leq a+b \int_0^t(t-s)^{-\gamma} v(s)\, ds \text{ for a.e } t \in [0,T],
\end{equation*}
then
\begin{equation*}
    v (t) \leq \frac{a}{1-\gamma} \exp\left(\frac{b}{1-\gamma}\left(\frac{\gamma}{bB_0}\right)^{-\frac{\gamma}{1-\gamma}} t\right) \text{ for a.e } t \in [0,T].
\end{equation*}
Here $B_0 = B(1-\gamma,1)$ where $B(x,y)$ is the Beta function.
\end{lemma}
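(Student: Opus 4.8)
The plan is to treat the hypothesis as a linear Volterra inequality $v \le a + k*v$ with the nonnegative weakly singular kernel $k(\tau) := b\,\tau^{-\gamma}$, writing $(k*\phi)(t) = \int_0^t k(t-s)\phi(s)\,ds$, and to exploit the monotonicity forced by $k \ge 0$. First I would iterate by repeated back-substitution of the bound into its own right-hand side; after $n$ steps this produces
\[
v(t) \;\le\; a\sum_{j=0}^{n-1}(k^{*j}*1)(t) \;+\; (k^{*n}*v)(t),
\]
where $k^{*j}$ is the $j$-fold self-convolution of $k$ and $k^{*0}*1 \equiv 1$. Every term here is an honest upper bound precisely because $k \ge 0$, so no cancellation is lost at any stage of the substitution.

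The central computation is the closed form of the iterated kernels. Using the Beta-function convolution identity $\int_0^t (t-s)^{-\gamma} s^{\rho}\,ds = B(1-\gamma,\rho+1)\,t^{\rho+1-\gamma}$ (whose first instance, $\rho = 0$, produces exactly $B_0 = B(1-\gamma,1) = \tfrac{1}{1-\gamma}$), an induction shows that the Beta factors telescope into Gamma functions, giving
\[
(k^{*j}*1)(t) \;=\; \frac{\big(b\,\Gamma(1-\gamma)\big)^{j}}{\Gamma\!\big(j(1-\gamma)+1\big)}\,t^{\,j(1-\gamma)}.
\]
Since $v \in L^\infty([0,T])$ and $\Gamma\!\big(j(1-\gamma)+1\big)$ grows faster than geometrically in $j$, the remainder obeys $|(k^{*n}*v)(t)| \le \|v\|_{L^\infty}\,(k^{*n}*1)(t) \to 0$ for each fixed $t$, so letting $n \to \infty$ yields the Mittag--Leffler bound
\[
v(t) \;\le\; a\sum_{j=0}^{\infty}\frac{\big(b\,\Gamma(1-\gamma)\big)^{j}\,t^{\,j(1-\gamma)}}{\Gamma\!\big(j(1-\gamma)+1\big)} \;=\; a\,E_{1-\gamma}\!\big(b\,\Gamma(1-\gamma)\,t^{1-\gamma}\big).
\]

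The last step converts this entire-function bound into the stated exponential, and this is where the explicit constants enter. The cleanest conceptual route is a supersolution/comparison argument: exhibit $w(t) := \tfrac{a}{1-\gamma}\exp(\lambda t)$ satisfying $w \ge a + k*w$ for a suitable rate $\lambda>0$, so that $v \le w$ follows from the same monotonicity (nonnegative resolvent $\sum_j k^{*j}$) used in the iteration. The check reduces to estimating the singular integral $\int_0^t (t-s)^{-\gamma} e^{\lambda s}\,ds$ and choosing $\lambda$ so that $b$ times the resulting multiplier equals $\gamma$; this is exactly what produces the prefactor $\tfrac{1}{1-\gamma}$ and a rate of the displayed form $\lambda = \tfrac{b}{1-\gamma}\big(\gamma/(bB_0)\big)^{-\gamma/(1-\gamma)}$, with $B_0 = B(1-\gamma,1)$ recording the elementary integral $\int_0^t(t-s)^{-\gamma}\,ds = B_0\,t^{1-\gamma}$. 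I expect the main obstacle to be precisely this bookkeeping---extracting a clean exponential rate with the exact displayed constant from the slowly decaying Mittag--Leffler tail (equivalently, controlling the singular integral uniformly on all of $[0,T]$)---rather than the conceptual structure; the iteration, the telescoping of the Beta factors, and the vanishing of the remainder are all routine once the sign condition $k \ge 0$ is used to preserve the inequality at each stage.
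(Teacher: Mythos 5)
You should first note that the paper itself does not prove this lemma; it simply cites Theorem 3.2 of Webb's paper, so your attempt must stand on its own. The first four steps of your plan are correct: the iteration $v \le a\sum_{j=0}^{n-1}(k^{*j}*1) + k^{*n}*v$, the closed form $(k^{*j}*1)(t) = (b\Gamma(1-\gamma))^{j}\, t^{j(1-\gamma)}/\Gamma(j(1-\gamma)+1)$, the vanishing of the remainder, and the Mittag--Leffler bound all check out. The genuine gap is the final step, which you yourself flag as ``the main obstacle'': it is never carried out, and the recipe you give for it does not produce the stated constants. The only uniform-in-$t$ estimate of your singular integral is $\int_0^t u^{-\gamma}e^{-\lambda u}\,du \le \Gamma(1-\gamma)\,\lambda^{-(1-\gamma)}$, and imposing ``$b$ times the multiplier equals $\gamma$'' then gives $\lambda = \left(b\,\Gamma(1-\gamma)/\gamma\right)^{1/(1-\gamma)}$, whereas the lemma asserts $\lambda = \frac{b}{1-\gamma}\left(\gamma/(bB_0)\right)^{-\gamma/(1-\gamma)} = \left(b/((1-\gamma)\gamma^{\gamma})\right)^{1/(1-\gamma)}$. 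These are different numbers: at $\gamma = 1/2$, $b=1$ your route yields $4\pi \approx 12.57$ while the lemma asserts $8$, so in that regime (which contains the paper's application, $\gamma = 1/(2\alpha)$) you would prove a strictly weaker inequality, not the stated one. The alternative reading of your recipe, using the non-uniform multiplier $B_0\,t^{1-\gamma}$ from $\int_0^t(t-s)^{-\gamma}ds = B_0\,t^{1-\gamma}$, is not a rate at all: setting $bB_0 t^{1-\gamma} = \gamma$ defines a \emph{time} $\delta = (\gamma/(bB_0))^{1/(1-\gamma)}$, and one still needs an argument to convert it into an exponential.

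Your supersolution framework can nonetheless be completed with the exact stated constants, but it requires a pointwise analysis rather than one uniform bound. The condition $w \ge a + k*w$ for $w(t) = \frac{a}{1-\gamma}e^{\lambda t}$ is equivalent to $\Phi(t) := (1-\gamma)e^{-\lambda t} + b\int_0^t u^{-\gamma}e^{-\lambda u}\,du \le 1$ for all $t$. Since $\Phi'(t) = e^{-\lambda t}\left(b t^{-\gamma} - \lambda(1-\gamma)\right)$, the function $\Phi$ has a unique maximum at $t_* = \left(b/(\lambda(1-\gamma))\right)^{1/\gamma}$; with the stated $\lambda$ one finds $t_* = \delta = (\gamma/(bB_0))^{1/(1-\gamma)}$ and $\lambda\delta = \gamma$, so
\begin{equation*}
    \Phi(t_*) \;\le\; (1-\gamma)e^{-\gamma} + b\int_0^{\delta}u^{-\gamma}\,du \;=\; (1-\gamma)e^{-\gamma} + \gamma \;\le\; 1,
\end{equation*}
which closes the argument (your comparison step $v - w \le k*(v-w) \le k^{*n}*(v-w) \to 0$ is fine, and once you have it, the Mittag--Leffler detour is unnecessary). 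For the record, Webb's own proof reaches the constants more directly by splitting the kernel at lag $\delta$: the near-singular piece contributes $bB_0\delta^{1-\gamma}\sup_{[0,t]}v = \gamma\sup_{[0,t]}v$, which is absorbed into the left side (this is where the prefactor $\frac{1}{1-\gamma}$ comes from), and the remaining piece, bounded by $b\delta^{-\gamma}\int_0^t v$, is handled by the classical Gr\"onwall lemma, producing exactly the rate $b\delta^{-\gamma}/(1-\gamma)$ displayed in the statement.
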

From here, we are able to improve the $L^{\infty}$ bound on $u$.
\begin{prop}\label{betterubound}
    Let $T>0$. Suppose $(\theta,u)$ is a mild solution to \eqref{e:SQG} on $[0,T]$ with initial data $(\theta_0,u_0)\in L^\infty(\R^2)$. Then $u$ satisfies the bound:
  \begin{equation*} 
        \| u (t )\|_{L^\infty_x} \leq \mu \|u_0\|_{L^\infty_x} \exp\left(C_\alpha\|\theta(t)\|_{L^\infty_{x}}^{\mu} t\right),
    \end{equation*}
  where 
   \begin{equation}\label{def:mu-and-C-mu}
       \mu = \frac{2\alpha}{2\alpha-1}, \quad B_0 = B(\mu^{-1}, 1), \quad \text{and}\quad  C_\alpha = \frac{2\alpha}{2\alpha-1}C(2\alpha B_0)^{\frac{1}{2\alpha-1}}.
   \end{equation} 
    \begin{proof}
Applying the $L^\infty$ norm to $(\ref{SQGintegral})_2$ and using the fact that $G_\alpha$ is a probability measure, we obtain,
\begin{equation}
    \| u(t) \|_{L^\infty_x} \leq \|u_0\|_{L^\infty_x} + \left\| \int_0^t K\ast\nabla G_\alpha(t-s)\cdot (\theta u)(s,x) \, ds \right\|_{L^\infty_x}.
\end{equation}

Next, bringing the $L^\infty$ norm inside the integral, invoking Young’s convolution inequality, and applying the bound on $K\ast \nabla G_\alpha(t)$ in \cref{lem:operator-L1-estimates} yields
\begin{equation*}
\begin{split}
  &  \| u (t) \|_{L^\infty_x} \leq \|u_0\|_{L^\infty_x} + \int_0^t C(t-s)^{-\frac{1}{2\alpha}} \| \theta u\|_{L^\infty_x} \, ds\\
  &\qquad \leq \|u_0\|_{L^\infty_x} + \| \theta\|_{L^\infty([0,t];\R^2)}  \int_0^t C(t-s)^{-\frac{1}{2\alpha}} \| u\|_{L^\infty_x} \, ds.
\end{split}    
\end{equation*}
 To reach the conclusion, we employ \cref{lem:volterra-gronwall} with $$a=\|u_0\|_{L^\infty_x},\quad b=C\| \theta\|_{L^\infty([0,t];\R^2)}, \text{    and } \gamma = \frac{1}{2\alpha}$$ to produce the desired inequality,
     \begin{equation*}
        \| u (t )\|_{L^\infty_x} \leq \frac{2\alpha}{2\alpha-1} \|u_0\|_{L^\infty_x} \exp\left(\frac{2\alpha}{2\alpha-1}C \| \theta\|_{L^\infty([0,t];\R^2)}\left(2\alpha B_0 \| \theta\|_{L^\infty([0,t];\R^2)}\right)^{\frac{1}{2\alpha-1}} t\right).
    \end{equation*}  
    \end{proof}

\end{prop}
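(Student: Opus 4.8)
The plan is to turn the statement into a scalar Volterra integral inequality for $v(t) := \|u(t)\|_{L^\infty_x}$ and then close it with \cref{lem:volterra-gronwall}. First I would apply the $L^\infty_x$ norm directly to the $u$-equation in \cref{SQGintegral}. Because $g_\alpha(t)$ is nonnegative with unit $L^1$ mass (as $\wh{g}_\alpha(t,0) = 1$ by \cref{def:frac-g}), convolution against it is non-expansive on $L^\infty$, so the linear term satisfies $\|G_\alpha(t) u_0\|_{L^\infty_x} \le \|u_0\|_{L^\infty_x}$.

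For the nonlinear term I would bound the integrand using the decay estimate from \cref{lem:operator-L1-estimates} with $k = 1$ (which is odd, so the $K$-convolution bound applies), namely $\|K * \nabla g_\alpha(t)\|_{L^1_x} \le C t^{-1/(2\alpha)}$. Applying Young's convolution inequality to the matrix-valued integrand interpreted as in \cref{D:KGpsi}, together with the pointwise bound $\|(\theta u)(s)\|_{L^\infty_x} \le \big(\sup_{0\le s \le t}\|\theta(s)\|_{L^\infty_x}\big) v(s)$, produces
\begin{equation*}
    v(t) \le \|u_0\|_{L^\infty_x} + C \Big(\sup_{0 \le s \le t}\|\theta(s)\|_{L^\infty_x}\Big) \int_0^t (t - s)^{-1/(2\alpha)} v(s) \, ds.
\end{equation*}

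This is precisely the hypothesis of \cref{lem:volterra-gronwall} with $a = \|u_0\|_{L^\infty_x}$, $b = C \sup_{0\le s\le t}\|\theta(s)\|_{L^\infty_x}$, and $\gamma = \tfrac{1}{2\alpha}$; here $\gamma < 1$ holds exactly because we are in the subcritical regime $\alpha > \tfrac12$. The remaining work is algebraic bookkeeping in the conclusion of \cref{lem:volterra-gronwall}: using $1 - \gamma = \tfrac{2\alpha - 1}{2\alpha}$ so that $(1-\gamma)^{-1} = \mu$ and $\tfrac{\gamma}{1-\gamma} = \tfrac{1}{2\alpha - 1}$, the prefactor becomes $\mu \|u_0\|_{L^\infty_x}$, while in the exponential the factor $\big(\tfrac{\gamma}{bB_0}\big)^{-\gamma/(1-\gamma)} = (2\alpha b B_0)^{1/(2\alpha-1)}$ combines with the leading $b/(1-\gamma)$ so that the total power of $\|\theta\|$ is $1 + \tfrac{1}{2\alpha - 1} = \mu$; absorbing the resulting powers of $C$ into the constant $C_\alpha$ of \cref{def:mu-and-C-mu} yields the stated bound.

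The main obstacle is not conceptual — the argument is a textbook Young-plus-Gr\"onwall estimate — but lies entirely in matching constants in that last step: one must verify that $\tfrac{\gamma}{1-\gamma} = \tfrac{1}{2\alpha-1}$ and that the powers of $\|\theta\|$ collapse to exactly $\mu$, so that the exponential rate is genuinely of the form $C_\alpha \|\theta(t)\|_{L^\infty_x}^{\mu} t$. A secondary technical point, needed to invoke Young's inequality componentwise, is the interpretation of $K * \nabla G_\alpha$ as an $L^1$ matrix field acting on the $L^\infty$ vector field $\theta u$, which is exactly the content of \cref{D:KGpsi}.
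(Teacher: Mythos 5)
Your proposal is correct and follows essentially the same route as the paper's proof: take the $L^\infty_x$ norm of \cref{SQGintegral}$_2$, use that $g_\alpha(t)$ has unit mass to handle the linear term, bound the Duhamel term via Young's inequality and the $k=1$ case of \cref{lem:operator-L1-estimates}, and close with \cref{lem:volterra-gronwall} using $a = \|u_0\|_{L^\infty_x}$, $b = C\sup_{0\le s\le t}\|\theta(s)\|_{L^\infty_x}$, $\gamma = \tfrac{1}{2\alpha}$. Your algebraic bookkeeping at the end (checking $\tfrac{\gamma}{1-\gamma} = \tfrac{1}{2\alpha-1}$ and that the powers of $\|\theta\|$ collapse to $\mu$) is in fact slightly more careful than the paper's, which silently absorbs a stray power of $C$ into $C_\alpha$.
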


We now establish a maximum principle for a solution $\theta$ of \eqref{e:SQG} with sufficient regularity.

\begin{prop} \label{prop:theta-max-principle}  
    Let $\alpha\in\left(\frac{1}{2},1\right].$
    Suppose that $(\theta,u)\in (L^\infty([0,T];C^2_b(\R^2)))^3$ is a mild solution to \eqref{e:SQG} on $[0,T]$ for some $T$ with initial data $(\theta_0,u_0)$. Then $\theta$ obeys the maximum principle 
    \begin{equation*}
    \|\theta\|_{L^\infty([0,T]\times \R^2)} \leq \|\theta_0\|_{L^\infty_x} .
    \end{equation*} 
   
\begin{proof}

 With the hypotheses of the proposition, 
 by \cref{P:SQGMotivation}, ($\theta,u$) is a $C^2$-solution to 
\begin{equation}\label{SQG}
\partial_t \theta + u\cdot\nabla\theta = -\nu\Lambda^{2\alpha}\theta
\end{equation}
on $[0,T] \times \mathbb{R}^2$.  

Let $\phi$ be a smooth, compactly supported bump function, with $\phi$ identically one on $B_1(0)$ and  $\supp \phi$ contained in $B_2(0)$.  For each $R>0$ and $x\in\mathbb{R}^2$, set $\phi_R(x) = \phi(x/R)$.

We multiply (\ref{SQG}) by $\phi_R$, which gives
\begin{equation} \label{ln:sqg-bump-version}
    \frac{\partial}{\partial t} (\phi_R \theta) + \phi_R u\cdot \nabla \theta = - \nu\phi_R \Lambda^{2\alpha} \theta.
\end{equation}
Using the product rule, we have
\begin{equation*}
    u \cdot \nabla (\phi_R \theta) = \theta u \cdot\nabla \phi_R +  \phi_R u \cdot\nabla\theta.
\end{equation*}
Making the above substitution and adding and subtracting $\nu\Lambda^{2\alpha}(\phi_R\theta)$ , one has 
\begin{equation}\label{SQGloc}
\begin{split}
    &\frac{\partial}{\partial t} (\phi_R \theta) + u \cdot \nabla(\phi_R \theta)  = \theta u \cdot \nabla\phi_R- \nu\phi_R \Lambda^{2\alpha} \theta +\nu\Lambda^{2\alpha}(\phi_R\theta) - \nu\Lambda^{2\alpha}(\phi_R\theta)\\
    &\qquad  = -\nu\Lambda^{2\alpha}(\phi_R\theta) + I + II,
\end{split}
\end{equation}
where $$I = u\theta\cdot\nabla\phi_R$$ and $$II = \nu [\Lambda^{2\alpha}, \phi_R]\theta.$$
Now, for $p\geq 2$, multiply (\ref{SQGloc}) by $p|\phi_R\theta|^{p-2}\phi_R\theta$ and integrate over $\mathbb{R}^2$.  This gives
\begin{equation*}
\begin{split}
    &\int_{\R^2} p|\phi_R\theta|^{p-2}\phi_R\theta \frac{\partial}{\partial t}(\phi_R \theta)\, dx  = - \int_{\R^2} 
 p|\phi_R\theta|^{p-2}\phi_R\theta u\cdot\nabla(\phi_R\theta)\, dx \\ 
 &\qquad -\int_{\R^2}  \nu p|\phi_R\theta|^{p-2}\phi_R\theta\Lambda^{2\alpha}(\phi_R\theta)\, dx  + \int_{\R^2} p|\phi_R\theta|^{p-2}\phi_R\theta(I + II)\, dx .    
\end{split}
\end{equation*}
Using a weak derivative formulation, we apply the identity $\frac{d}{dt} |z(t)|^p = p |z(t)|^{p-2} z(t) \frac{dz}{dt}$ and the Liebniz integral rule to conclude that
\begin{equation*}
\begin{split}
         \int_{\R^2} p|\phi_R\theta|^{p-2}\phi_R\theta \frac{\partial}{\partial t}(\phi_R \theta)dx &= \int_{\R^2} \frac{\partial}{\partial t}  |\phi_R \theta|^p dx = \frac{d}{dt} \int_{\R^2}  |\phi_R \theta|^p dx.  
\end{split}  
\end{equation*}
Therefore, we have 
\begin{equation} \label{ln:divergence-of-u-term}
\begin{split}
&\frac{d}{dt} \|\phi_R\theta\|_{L^p}^{p} = -p\int_{\R^2} |\phi_R\theta|^{p-2}\phi_R\theta u\cdot\nabla(\phi_R\theta) \, dx\\
&\qquad -\nu p\int_{\R^2} |\phi_R\theta|^{p-2}\phi_R\theta\Lambda^{2\alpha}(\phi_R\theta) \, dx + p\int_{\R^2} |\phi_R\theta|^{p-2}\phi_R\theta(I + II) \, dx. 
\end{split}
\end{equation}
With the divergence free condition on $u_0$, by \cref{L:divuZero}, $u$ is divergence free for all time, so we can recast the first term on the right hand side of \eqref{ln:divergence-of-u-term} as
\begin{equation} \label{ln:divergence-of-u-zero}
\begin{split}
    p \int_{\R^2} |\phi_R \theta|^{p-2} \phi_R \theta \, u \cdot \nabla (\phi_R \theta) \, dx  
    &= \int_{\R^2} u \cdot \nabla (\phi_R \theta)^p \, dx \\
    &= -\int_{\R^2} (\phi_R \theta)^p \operatorname{div} u \, dx = 0.
\end{split}
\end{equation}
Moreover, by Lemma 2.5 of \cite{cordoba2004maximum}, $$-p\nu \int_{\R^2} |\phi_R\theta|^{p-2}(\phi_R\theta)\Lambda^{2\alpha}(\phi_R\theta) \, dx \leq 0.$$  Thus,
\begin{equation}\label{ln:positivity-lemma}
p\| \phi_R\theta \|_{L^p_x}^{p-1} \frac{d}{dt} \| \phi_R\theta \|_{L^p_x} \leq p\int_{\R^2} |\phi_R\theta|^{p-2}\phi_R\theta(I + II) \, dx.
\end{equation}
We divide both sides of (\ref{ln:positivity-lemma}) by $p\| \phi_R\theta \|_{L^p_x}^{p-1}$.  We conclude that
\begin{equation}\label{postdivide}
\begin{split}
&\frac{d}{dt} \| \phi_R\theta \|_{L^p_x} \leq p\left(\frac{1}{p\| \phi_R\theta \|_{L^p_x}^{p-1}}\right)\int_{\R^2} |\phi_R\theta|^{p-2}\phi_R\theta(I + II) \, dx  \\
&\qquad \leq \| I + II\|_{L^{\infty}_x} \frac{\| \phi_R\theta \|_{L^{p-1}_x}^{p-1}}{\| \phi_R\theta \|_{L^p_x}^{p-1}} = \| I + II\|_{L^{\infty}_x} \left(\frac{\| \phi_R\theta \|_{L^{p-1}_x}}{\| \phi_R\theta \|_{L^p_x}}
\right)^{p-1}.
\end{split}
\end{equation}
We now bound the second term in the product. By the generalized H\"older inequality and the compactness of the support of $\phi$,
$$\| \phi_R\theta \|_{L^{p-1}_x} = \| \phi_{2R}\phi_R\theta \|_{L^{p-1}_x} \leq \| \phi_{2R} \|_{L^{p(p-1)}_x}\| \phi_R\theta \|_{L^{p}_x} \leq (2\pi R^2)^{\frac{1}{p(p-1)}}\| \phi_R\theta \|_{L^{p}_x}.$$
This implies 
$$\left(\frac{\| \phi_R\theta \|_{L^{p-1}_x}}{\| \phi_R\theta \|_{L^p_x}}
\right)^{p-1} \leq  (2\pi R^2)^{\frac{1}{p}}.$$

Substituting the above bound into (\ref{postdivide}), we find that for any $p\in [1,\infty)$ and any fixed $R<\infty$,
\begin{equation*}
    \frac{d}{dt} \| \phi_R\theta \|_{L^p_x} \leq (2\pi R^2)^{\frac{1}{p}} \| I(s) + II(s)\|_{L^{\infty}_x}.
\end{equation*}
Integrating the above expression over time, we get
\begin{equation*} 
    \|\phi_R \theta(t) \|_{L^p_x} \leq \|\phi_R \theta_0 \|_{L^p_x} + (2\pi R^2)^{\frac{2}{p}}\int_0^t\|I(s) + II(s)\|_{L^\infty_x} \, ds.
\end{equation*}
We take the limit as $p \to \infty$ to produce the bound
\begin{equation} \label{ln:L-infty-cutoff-bound}
    \|\phi_R \theta(t) \|_{L^\infty_x} \leq \|\phi_R \theta_0 \|_{L^\infty_x} + \int_0^t \|I(s) + II(s)\|_{L^\infty_x} \, ds.
\end{equation}

We claim that $\|I(t)+II(t)\|_{L^\infty_{x}}<\infty$ for all $t$ and moreover, $$\lim_{R\rightarrow \infty}\| I(t) + II(t)\|_{L^{\infty}_x}= 0.$$

Clearly we have
\begin{equation}\label{ln:L-infty-I-term}
\|I\|_{L^{\infty}_x} \leq \frac{1}{R} \left\|(\theta u) (s,x) \right\|_{L^{\infty}_x} \|\nabla\phi\|_{L^{\infty}_x}.    
\end{equation}

We now estimate the commutator term $II$. We consider two cases separately: $\alpha\in (1/2, 1)$ and $\alpha=1$. First assume $\alpha\in (1/2, 1)$. We expand the fractional Laplacian using the singular integral definition \eqref{def:frac-laplacian-sio}, since $\theta\in C^2_b(\R^2)$. We then simplify the commutator as follows:

\begin{equation*}
\begin{split}
[\Lambda^{2\alpha}, \phi_R]\theta(x) &= \phi_R(x)\int_{\R^2} \frac{\theta(x) - \theta(y)}{|x-y|^{2+2\alpha}} \, dy - \int_{\R^2} \frac{\phi_R(x)\theta(x) - \phi_R(y)\theta(y)}{|x-y|^{2+2\alpha}} \, dy\\
&= \int_{\R^2} \frac{\phi_R(y)\theta(y) - \phi_R(x)\theta(y)}{|x-y|^{2+2\alpha}} \, dy.
\end{split}
\end{equation*}
We now split up the above integral into two parts,
\begin{equation*}
\begin{split}
[\Lambda^{2\alpha}, \phi_R]\theta(x) 
&= III + IV,
\end{split}
\end{equation*}
where
\begin{equation*}
\begin{split}
&III = \int_{\mathbb{R}^2} \phi(x - y) \frac{\phi_R(y)\theta(y) - \phi_R(x)\theta(y)}{|x - y|^{2 + 2\alpha}} \, dy,\\
&IV = \int_{\mathbb{R}^2} (1 - \phi)(x - y) \frac{\phi_R(y)\theta(y) - \phi_R(x)\theta(y)}{|x - y|^{2 + 2\alpha}} \, dy.
\end{split}
\end{equation*}
We estimate $III$ and $IV$ separately. Starting with $III$, write 
\begin{equation*}
\begin{split}
&III  = \int_{\R^2} \phi(x-y)\frac{\phi_R(y)\theta(y) - \phi_R(x)\theta(y)}{|x-y|^{2+2\alpha}} \, dy\\
& = \int_{\R^2} \phi(x-y)\frac{(\phi_R(y) - \phi_R(x))(\theta(y)-\theta(x)) + \phi_R(y)\theta(x) - \phi_R(x)\theta(x)}{|x-y|^{2+2\alpha}} \, dy\\
&= \int_{\R^2} \phi(x-y)\frac{(\phi_R(y) - \phi_R(x))(\theta(y)-\theta(x))}{|x-y|^{2+2\alpha}} \, dy 
+ \theta(x)\int_{\R^2}\phi(x-y)\frac{(\phi_R(y) - \phi_R(x))}{|x-y|^{2+2\alpha}} \, dy.
\end{split}
\end{equation*}
We invoke the Lipschitz bounds on $\phi_R$ and $\theta$, as well as the singular integral definition of $\Lambda^{2 \alpha}$, and find that
\begin{equation} \label{ln:expression-for-iii}
\begin{split}
&III  \leq \|\nabla\phi_R\|_{L^{\infty}_x}\|\nabla\theta\|_{L^{\infty}_x}\int_{\R^2} \frac{\phi(x-y)}{|x-y|^{2\alpha}}  + \abs{\theta(x)\int_{\R^2}\phi(x-y)\frac{(\phi_R(y) - \phi_R(x))}{|x-y|^{2+2\alpha}} \, dy }\\
&\qquad \leq  C\|\nabla\phi_R\|_{L^{\infty}_x}\|\nabla\theta\|_{L^{\infty}_x} + \abs{\theta(x) \Lambda^{2\alpha} \phi_R(x)}\\
&\qquad\qquad\qquad + \abs{\theta(x)\int_{\R^2}(1-\phi)(x-y)\frac{(\phi_R(y) - \phi_R(x))}{|x-y|^{2+2\alpha}} \, dy }\\
&\qquad \leq C\|\nabla\phi_R\|_{L^{\infty}_x}\|\nabla\theta\|_{L^{\infty}_x} + \|\theta\|_{L^{\infty}_x} \|\Lambda^{2\alpha} \phi_R\|_{L^{\infty}_x} + C\|\theta\|_{L^{\infty}_x}\|\nabla\phi_R\|_{L^{\infty}_x}.
\end{split}
\end{equation}

Moving on to $IV$, we use the Lipschitz bound of $\phi_R$ to write
\begin{equation}\label{IVbound}
\begin{split}
&\qquad IV \leq \int_{\R^2}(1-\phi)(x-y)\frac{(\phi_R(y) - \phi_R(x))}{|x-y|^{2+2\alpha}}|\theta(y)| \, dy\\
&\leq  \|\theta\|_{L^{\infty}_x}\|\nabla\phi_R\|_{L^{\infty}_x}\int_{\R^2}\frac{(1-\phi)(x-y)}{|x-y|^{1+2\alpha}} \, dy\leq C\|\theta\|_{L^{\infty}_x}\|\nabla\phi_R\|_{L^{\infty}_x}. 
\end{split}
\end{equation}
Combining \eqref{ln:expression-for-iii} and (\ref{IVbound}), we conclude that 
$$\|  [\Lambda^{2\alpha}, \phi_R]\theta(x) \|_{L^{\infty}_x} =\|III+IV\|_{L^{\infty}_x} \leq \tilde{C}\| \theta\|_{C^1_b}( \|\nabla\phi_R\|_{L^{\infty}_x} + \|\Lambda^{2\alpha} \phi_R\|_{L^{\infty}_x}).$$
It remains to show that as $R\to\infty$, the above quantities vanish in the limit.  Clearly we have
\begin{equation} \label{ln:decay-of-phi-R}
    \|\nabla \phi_R\|_{L^\infty_x}  
    \leq \frac{1}{R} \left\| (\nabla \phi)\left(\frac{x}{R}\right) \right\|_{L^\infty_x} 
    = \frac{1}{R} \|\nabla \phi\|_{L^\infty_x}.
\end{equation}

Lastly, we have to prove $\|\Lambda^{2\al} \phi_R\|_{L^{\infty}_x} \to 0$ as $R$ approaches infinity.  To see why this is the case, write
\begin{equation}
\begin{split}
\Lambda^{2\al} \phi_R(x) &= \int_{\R^2}
            \frac{\phi_R(y) - \phi_R(x)}
                {\abs{x - y}^{2+2\alpha}} \, dy = \frac{1}{R^{2+2\al}}\int_{\R^2}
            \frac{\phi(y/R) - \phi(x/R)}
                {\abs{(x/R) - (y/R)}^{2+2\al}} \, dy \\
&\qquad = \frac{1}{R^{2\al}}\int_{\R^2}
            \frac{\phi(z) - \phi(x/R)}
                {\abs{(x/R) - z}^{2+2\al}} \, dz= \frac{1}{R^{2\al}} (\Lambda^{2\al} \phi)(x/R).
\end{split}
\end{equation}
Therefore, 
\begin{equation} \label{ln:frac-laplace-to-0}
\begin{split}
    \| \Lambda^{2\al} \phi_R\|_{L^{\infty}_x} = \frac{1}{R^{2\al}}\| (\Lambda^{2\al} \phi)(\cdot/R)\|_{L^{\infty}_x} = \frac{1}{R^{2\al}}\| \Lambda^{2\al} \phi\|_{L^{\infty}_x}.
\end{split}
\end{equation}
Gathering \eqref{ln:decay-of-phi-R} and \eqref{ln:frac-laplace-to-0}, we obtain the following commutator estimate:
\begin{equation}\label{ln:commutator-estimate}
    \|  [\Lambda^{2\alpha}, \phi_R]\theta(x) \|_{L^{\infty}_x} \leq \tilde{C}\| \theta\|_{C^1_b} \left( \frac{1}{R} \|\nabla \phi\|_{L^\infty_x} + \frac{1}{R^{2\al}}\| \Lambda^{2\al} \phi\|_{L^{\infty}_x} \right).
\end{equation}
Substituting \eqref{ln:commutator-estimate} and the estimate for $I$ in \eqref{ln:L-infty-I-term} into the $L^\infty$ bound for the cutoff of $\theta$ in \eqref{ln:L-infty-cutoff-bound}, we conclude that
\begin{equation*}
\begin{aligned}
    \|\phi_R \theta(t)\|_{L^\infty_x}  
    &\leq \|\phi_R \theta_0 \|_{L^\infty_x} +\int_0^t \Bigg( 
        \frac{1}{R} \|(u\theta)(s)\|_{L^\infty_x} \|\nabla \phi\|_{L^\infty_x} \\
    &\quad + \tilde{C}\nu \| \theta(s)\|_{L^\infty_t C^1_b} 
        \left( \frac{1}{R} \|\nabla \phi\|_{L^\infty_x} 
        + \frac{1}{R^{2\alpha}} \|\Lambda^{2\alpha} \phi\|_{L^\infty_x} 
        \right) 
    \Bigg) \, ds.
\end{aligned}
\end{equation*}
Thus, given our hypotheses,
$\theta\in L^\infty([0,T]; C^1_b(\R^2)),$ we can rewrite the above expression as
\begin{equation}\label{ln:L-infty-cutoff-theta-prelimit-step}
\begin{aligned}
    \|\phi_R \theta(t)\|_{L^\infty_x}  
    &\leq \|\phi_R \theta_0 \|_{L^\infty_x} +t \Bigg( 
        \frac{1}{R} \|\theta u\|_{L^\infty_{t,x}} \|\nabla \phi\|_{L^\infty_x} \\
    &\quad + \tilde{C} \| \theta\|_{L^\infty_t C^1_b} 
        \left( \frac{1}{R} \|\nabla \phi\|_{L^\infty_x} 
        + \frac{1}{R^{2\alpha}} \|\Lambda^{2\alpha} \phi\|_{L^\infty_{x}} 
        \right) 
    \Bigg) .
\end{aligned} 
\end{equation}
Taking the limit $R\to\infty$ and invoking the weak-$\star$ convergence of $\phi_R \theta \to \theta$ and the uniform boundedness principle yields the desired conclusion,
\begin{equation}
    \|\theta(t)\|_{L^\infty_x}  
    \leq \|\theta_0\|_{L^\infty_x},
\end{equation}
for all $t\in [0,\tau)$, when $\alpha\in (1/2, 1)$.

Now suppose that $\alpha = 1$. We again fix $R>0$ and multiply (\ref{SQG}) by the radial function $\phi_R$, which gives
\begin{align*}
    \frac{\partial}{\partial t} (\phi_R \theta) + \phi_R u\cdot \nabla \theta
    = \nu \phi_R \Delta \theta.
\end{align*}
We expand the Laplacian term as
\begin{align*}
    \phi_R \Delta \theta
        &= \Delta (\phi_R \theta)
            - \theta \Delta \phi_R
            - 2 \grad \theta \cdot \grad \phi_R.
\end{align*}
Proceeding as before, we multiply by $p \abs{\phi_R \theta}^{p - 2} \phi_R \theta$ and integrate over $\R^2$, leading to
\begin{align*}
    \diff{}{t} \norm{\phi_R \theta}_{L^p}^p
        &\le p \int_{\R^2}
            \abs{\phi_R \theta}^{p - 2} \phi_R \theta
            (I + II') \, dx,
\end{align*}
where
\begin{align*}
    II'
        &= - \nu\theta \Delta \phi_R
            - 2 \nu\grad \theta \cdot \grad \phi_R.
\end{align*}

For the new term, we have
\begin{equation} \label{ln:II-bound-for-alpha-1}
\begin{split}
    \norm{II'}_{L^\iny}
        &\le \nu\norm{\Delta \phi_R}_{L^\iny}
            \norm{\theta}_{L^\iny}
        + 2 \nu\norm{\grad \phi_R}_{L^\iny}
            \norm{\grad \theta}_{L^\iny} \\
        &\le \frac{\tilde{C}'\nu}{R^2}
            \norm{\theta}_{L^\iny}
        + \frac{\tilde{C}'\nu}{R}
            \norm{\grad \theta}_{L^\iny},    
\end{split}
\end{equation}
Repeating the argument up to \eqref{ln:L-infty-cutoff-bound}, we get
\begin{equation*}
    \|\phi_R \theta(t) \|_{L^\infty_x} \leq \|\phi_R \theta_0 \|_{L^\infty_x} + \int_0^t \|I(s) + II'(s) \|_{L^\infty_x}\, ds.
\end{equation*}
By repeating an analogous argument as above, we arrive at the same conclusion. That is, for $\alpha = 1$,
\begin{equation}
    \|\theta(t)\|_{L^\infty_x}  
    \leq \|\theta_0\|_{L^\infty_x}
\end{equation}
for all $t\in[0,T]$.
\end{proof}
\end{prop}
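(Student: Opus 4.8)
The plan is to leverage the classical-solution structure already in hand: by \cref{P:SQGMotivation} the pair $(\theta,u)$ solves $\partial_t\theta+u\cdot\grad\theta=-\nu\Lambda^{2\alpha}\theta$ pointwise, and by \cref{P:time-regularity}(4) the velocity is divergence free for all time. The natural route is the Córdoba--Córdoba $L^p$ maximum principle: test the equation against $p|\theta|^{p-2}\theta$, kill the transport term using $\dv u=0$, discard the dissipation term via the positivity inequality $\int|\theta|^{p-2}\theta\,\Lambda^{2\alpha}\theta\,dx\ge0$, and send $p\to\infty$. The structural obstruction is that $\theta$ is only bounded, not decaying, so none of these integrals converge. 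First I would therefore localize: fix a bump $\phi$ with $\phi\equiv1$ on $B_1(0)$ and $\supp\phi\subset B_2(0)$, set $\phi_R(x)=\phi(x/R)$, and derive the equation satisfied by $\phi_R\theta$.

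Multiplying the pointwise equation by $\phi_R$ and rewriting $u\cdot\grad(\phi_R\theta)=\theta\,u\cdot\grad\phi_R+\phi_R\,u\cdot\grad\theta$, one finds
\[
\partial_t(\phi_R\theta)+u\cdot\grad(\phi_R\theta)=-\nu\Lambda^{2\alpha}(\phi_R\theta)+I+II,
\]
with $I=u\theta\cdot\grad\phi_R$ and the commutator $II=\nu[\Lambda^{2\alpha},\phi_R]\theta$. Since $\phi_R\theta$ is now compactly supported it lies in every $L^p$, so I can run the Córdoba--Córdoba argument cleanly: testing against $p|\phi_R\theta|^{p-2}\phi_R\theta$, the transport term vanishes by $\dv u=0$ after integrating by parts, the dissipation term is $\le0$ by Lemma~2.5 of \cite{cordoba2004maximum}, and only the $I+II$ contribution survives. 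Using $\supp\phi_R\subset B_{2R}(0)$ to bound the ratio $\norm{\phi_R\theta}_{L^{p-1}}/\norm{\phi_R\theta}_{L^p}$ by a volume factor that tends to $1$ as $p\to\infty$, I would obtain, after integrating in time and passing to the limit $p\to\infty$,
\[
\norm{\phi_R\theta(t)}_{L^\infty}\le\norm{\phi_R\theta_0}_{L^\infty}+\int_0^t\norm{I(s)+II(s)}_{L^\infty}\,ds.
\]
Everything then reduces to showing $\norm{I}_{L^\infty}+\norm{II}_{L^\infty}\to0$ as $R\to\infty$; the term $I$ is immediate, since $\norm{\grad\phi_R}_{L^\infty}=R^{-1}\norm{\grad\phi}_{L^\infty}$ and $u\theta$ is bounded.

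The hard part will be the commutator estimate for $II$ in the genuinely nonlocal range $\alpha\in(\tfrac12,1)$. Here I would represent $\Lambda^{2\alpha}$ by its singular-integral form \cref{def:frac-laplacian-sio} (legitimate on $C^2_b$ by \cref{lem:fractional-laplacian-agreeing}), so that $[\Lambda^{2\alpha},\phi_R]\theta(x)=\int(\phi_R(y)-\phi_R(x))\theta(y)|x-y|^{-2-2\alpha}\,dy$. Splitting this with $\phi(x-y)$ and $(1-\phi)(x-y)$ into a near-field and a far-field piece, the near-field is handled by combining the mean-value bounds $\abs{\phi_R(y)-\phi_R(x)}\le\norm{\grad\phi_R}_{L^\infty}|x-y|$ and $\abs{\theta(y)-\theta(x)}\le\norm{\grad\theta}_{L^\infty}|x-y|$ after writing $\theta(y)=(\theta(y)-\theta(x))+\theta(x)$: the difference piece carries two Lipschitz factors and hence only an $|x-y|^{-2\alpha}$ singularity (integrable for $\alpha<1$), while the leftover $\theta(x)$ piece reassembles into $\theta(x)\,\Lambda^{2\alpha}\phi_R$; the far-field decays like $|x-y|^{-1-2\alpha}$ and is controlled purely by $\norm{\grad\phi_R}_{L^\infty}$. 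The payoff is the bound $\norm{II}_{L^\infty}\le C\norm{\theta}_{C^1_b}(\norm{\grad\phi_R}_{L^\infty}+\norm{\Lambda^{2\alpha}\phi_R}_{L^\infty})$, and the decisive scaling computation $\Lambda^{2\alpha}\phi_R(x)=R^{-2\alpha}(\Lambda^{2\alpha}\phi)(x/R)$ gives $\norm{\Lambda^{2\alpha}\phi_R}_{L^\infty}=R^{-2\alpha}\norm{\Lambda^{2\alpha}\phi}_{L^\infty}\to0$. For $\alpha=1$ the operator $\Lambda^2=-\Delta$ is local, so the commutator collapses to $II'=-\nu\theta\,\Delta\phi_R-2\nu\grad\theta\cdot\grad\phi_R$, estimated directly as $O(R^{-2}+R^{-1})$. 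In both cases, sending $R\to\infty$ (with $\phi_R\theta\to\theta$) makes the integral term vanish and yields $\norm{\theta(t)}_{L^\infty}\le\norm{\theta_0}_{L^\infty}$.
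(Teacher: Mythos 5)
Your proposal is correct and follows essentially the same route as the paper's proof: localization by $\phi_R$, the C\'ordoba--C\'ordoba $L^p$ argument with the transport term killed by $\dv u = 0$ and the dissipation discarded by positivity, the identical near-field/far-field splitting of the commutator $[\Lambda^{2\alpha},\phi_R]\theta$ with the reassembled $\theta(x)\Lambda^{2\alpha}\phi_R$ term, the scaling identity $\Lambda^{2\alpha}\phi_R = R^{-2\alpha}(\Lambda^{2\alpha}\phi)(\cdot/R)$, and the separate local treatment of $\alpha=1$. The only cosmetic difference is that you cite \cref{P:time-regularity}(4) for the divergence-free property where the paper invokes \cref{L:divuZero} directly; these are the same underlying fact.
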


\subsection{Extending the Solution}\label{proof-of-global-in-time-solution}

In this section we prove \cref{thm:global-in-time-solution,thm:L-infty-global-solution}.

\begin{proof}[\textbf{Proof of \cref{thm:global-in-time-solution}}]
By \cref{thm:existence-of-solutions}, we can produce a short time solution $(\theta,u)$ on $[0,\tau]$ for some $\tau >0$. By \cref{thm:regularity-of-theta-and-u}, $(\theta(t),u(t)) \in (C^2_b(\R^2))^3$ for all $t\in [0,\tau]$. This implies that $(\theta,u)$ is a classical solution to \eqref{e:SQG} on $[0,\tau]$ by \cref{prop:constitutive-law-holds} and \cref{P:SQGMotivation}. Moreover, from the construction of the solution, we have $\|\theta(t)\|_{L^\infty_x} \leq 2\|\theta_0\|_{L^\infty_x}$ and $\|u(t)\|_{L^\infty_x} \leq 2\|u_0\|_{L^\infty_x}$. In fact, we have better bounds for the solution: 
\begin{equation} \label{eq:extension-estimates}
    \begin{aligned}
        &\text{(i) By \cref{prop:theta-max-principle}, } \|\theta (t)\|_{L^\infty_x}\leq \|\theta_0\|_{L^\infty_x}, \quad \forall t\in [0,\tau]. \\
        &\text{(ii) By \cref{betterubound}, } \|u(t)\|_{L^\infty_x}\leq \frac{2\alpha}{2\alpha-1} \|u_0\|_{L^\infty_x}\exp(C_\alpha \|\theta_0\|_{L^\infty_x}^\mu t), \quad \forall t\in [0,\tau],
    \end{aligned}
\end{equation}

where $$ \mu = \frac{2\alpha}{2\alpha-1} \quad \text{ and }\quad  C_\alpha = \frac{2\alpha}{2\alpha-1}C(2\alpha B_0)^{\frac{1}{2\alpha-1}}.$$

Let $\tau_1= \tau$. Inductively, we will create a sequence $\{\tau_n\}_{n=1}^\infty$ of finite times for which we can extend the solution. To do so, for each additional extension time $\tau_n$, we verify that the conditions \eqref{eq:extension-estimates} hold. 

For each integer $k\geq 1$, define $$S_{k} := \sum_{j=1}^{k} \tau_j.$$ Suppose that we have constructed the solution up to time $S_{n-1}$ and that \eqref{eq:extension-estimates} holds for all $t\in [0,S_{n-1}]$. Then, define $\tau_n$ as in \eqref{Tcondition} by
\begin{equation}\label{Tcondition-i}
\frac{2\alpha}{2\alpha-1}C\tau_n^{1-\frac{1}{2\alpha}} \left( \|\theta_0\|_{L^\infty_{x}} +\frac{2\alpha}{2\alpha-1} \|u_0\|_{L^\infty_x} \exp\left(C_\alpha \|\theta_0\|_{L^\infty_x}^\mu S_{n-1}\right)\right) \leq \tfrac{1}{8}.
\end{equation}
For this $\tau_n$, setting $S_n := S_{n-1} + \tau_n$, we use \cref{thm:existence-of-solutions} to generate a mild solution to \eqref{e:SQG} on $[S_{n-1}, S_n]$ whose initial data is given by $(\theta(S_{n-1},x),u(S_{n-1},x))$. With \cref{thm:regularity-of-theta-and-u}, the extended solution satisfies $(\theta,u)\in (L^\infty([0,S_n];C^2_b(\R^2)))^3$. 
Moreover, on the interval $[0,S_n]$, we have the a priori bounds on $\theta$ and $u$ given by
\begin{equation*}
\begin{split}
    &\|\theta \|_{L^\infty([0, S_n]\times \R^2)} \leq \max \left\{\|\theta \|_{L^\infty([S_{n-1}, S_n]\times \R^2)}, \|\theta \|_{L^\infty([0, S_{n-1}]\times \R^2)} \right\} \\
    &\qquad\qquad \leq \max \left\{ 2\|\theta(S_{n-1},\cdot) \|_{L^\infty_x}, \|\theta_0\|_{L^\infty_x} \right\} \leq 2\|\theta_0\|_{L^\infty_x},
    \end{split}
\end{equation*}
and similarly,
\begin{equation*}
\begin{split}
    &\|u\|_{L^\infty([0, S_n]\times \R^2)} \leq \max \left\{\|u \|_{L^\infty([S_{n-1}, S_n]\times \R^2)}, \|u \|_{L^\infty([0,S_{n-1}]\times \R^2)} \right\} \\
    &\qquad\qquad \leq \max \left\{2\|u (S_{n-1},\cdot) \|_{L^\infty_x}, \frac{2\alpha}{2\alpha-1}\|u_0\|_{L^\infty_x}\exp(C_\alpha \|\theta_0\|_{L^\infty_x}^\mu t) \right\}\\
    &\qquad\qquad\leq  \frac{4\alpha}{2\alpha-1}\|u_0\|_{L^\infty_x}\exp(C_\alpha \|\theta_0\|_{L^\infty_x}^\mu t).
    \end{split}
\end{equation*}
Thus, we improve these estimates to match our induction hypothesis.
\begin{enumerate}
   \item We now have that a classical solution exists on $[0,S_{n}]$. Thus, with \cref{prop:theta-max-principle}, we deduce $\|\theta (t)\|_{L^\infty_x}\leq \|\theta_0\|_{L^\infty_x}$ for all $t\in [0,S_{n}]$.
    \item By \cref{betterubound}, $\|u(t)\|_{L^\infty_x}\lesssim \|u_0\|_{L^\infty_x}\exp(C_{\alpha}\|\theta_0\|_{L^\infty_x}^\mu t)$ for some exponent $\mu>0$ and all $t\in [0,S_{n}].$

\end{enumerate}

Lastly, we verify that this extension process will cover all time $t\in [0,\infty)$.
Rearranging \eqref{Tcondition-i} and solving for \(\tau_n\) gives
\[
\tau_n \le \left[\frac{1}{8}\,\frac{2\alpha-1}{2\alpha}\,\frac{1}{C}\,\frac{1}{\displaystyle \|\theta_0\|_{L^\infty_{x}} +\frac{2\alpha}{2\alpha-1}\,\|u_0\|_{L^\infty_x}\,\exp\!\left(C_\alpha \|\theta_0\|_{L^\infty_x} ^\mu S_{n-1} \right)}\right]^{\frac{2\alpha}{2\alpha-1}}.
\]
For large \(S_{n-1}\) the exponential term dominates so that
\[
\tau_n \gtrsim \exp\!\left(-\lambda\, S_{n-1}\right),
\]
for some \(\lambda>0\). It is clear that $S_n$ satisfies the discrete recurrence,
\[
S_n = S_{n-1} + \tau_n \quad \text{with} \quad \tau_n \gtrsim \exp(-\lambda\,S_{n-1}).
\]
For that reason, consider the continuous ODE analogue:
\[
\frac{dS}{dn} = \exp(-\lambda S),\quad S(0)=S_0.
\]
Hence,
\[
S(n) = \frac{1}{\lambda}\ln\Bigl(\lambda\, n+\exp(\lambda S_0)\Bigr).
\]
Since \(\ln(\lambda\, n+\exp(\lambda S_0))\to\infty\) as \(n\to\infty\), we conclude that \(S(n)\to\infty\). By a discrete-continuous comparison argument, it follows that
\[
\sum_{n=1}^\infty \tau_n = \lim_{n\to\infty} S_n = \infty.
\]

Therefore, we conclude that the solution can be extended for arbitrary time. Moreover, for any $T>0$, we have the bounds,
\begin{enumerate}
    \item $\|\theta\|_{L^\infty([0,T]\times \R^2)}\leq \|\theta_0\|_{L^\infty(\R^2)}$ and
    \item $\|u\|_{L^\infty([0,T]\times \R^2)} \leq \|u_0\|_{L^\infty_x} \exp\left( C_\alpha \|\theta_0\|_{L^\infty_x} ^\mu T\right) $.
\end{enumerate}
We conclude from \cref{thm:regularity-of-theta-and-u} that if $(\theta_0, u_0) \in C^k_b(\R^2)$, then the higher derivatives of $(\theta,u)$ also exist on the interval $[0,\tau_n]$ for $n$ arbitrarily large.  Moreover, we have the simple estimate  $\|\nabla^{k} \theta\|_{L^\infty([0,\tau_n]\times\R^2)} \leq 2^n \|\nabla^k \theta_0\|_{L^\infty_{x}}  $ and $\|\nabla^k u\|_{L^\infty([0,\tau_n]\times\R^2)}  \leq 2^{n}\|\nabla u_0\|_{L^\infty_{x}}$.
\end{proof}

Having extended the $C^k$ solution for $k\geq 2$ to be global in time, we now 
use this result to extend solutions whose initial data is in $L^\infty(\R^2)$.
We present the proof of \cref{thm:L-infty-global-solution}
\begin{proof}[\textbf{Proof of \cref{thm:L-infty-global-solution}}] \label{proof:L-infty-global-solution}
    For the mild solutions $(\theta,u)$, if $(\theta_0,u_0)$ are not $C^\gamma$ continuous for some $\gamma$, set $(\theta_0,u_0):= (\theta(t),u(t))$ for some arbitrarily small $t\in [0,T]$. As a consequence of \cref{P:gamma-holder-regularity}, we can assume that there exists $\gamma>0$ such that $u_0$ and $\theta_0$ belong to $C^{\gamma}(\R^2)$. Let $\varphi:\R^2\to \R$ be a smooth bump function and consider $\varphi_n (x):= n^2\varphi \left(nx\right).$ Consider the sequence $\theta_0^n := \varphi_n \ast \theta_0$. Clearly, $\theta_0^n \in L^\infty(\R^2)$ and moreover $u_0^n:= \PV K\ast \theta_0^n$ exists and is an element of $L^\infty(\R^2)$ as
    \begin{equation*}
    \begin{split}
        \|\PV K \ast (\varphi_n \ast \theta)\|_{\infty} \leq \|\varphi_n\|_{L^1} \|\PV K \ast \theta \|_{L^\infty}. 
    \end{split}
    \end{equation*}
    Because of the convolution, we have that $(\theta_0^n, u_0^n)$ is sufficiently
    regular that \cref{thm:global-in-time-solution} applies.
    We can therefore let $(\theta^n, u^n)$ be the classical solution on $[0,T]$ with initial data $(\theta_0^n, u_0^n)$. We show that $\{(\theta^n, u^n)\}_{n=1}^\infty$ is Cauchy in $L^{\infty}_{t,x}$. For $n>m\geq 0$,
    \begin{equation*}
    \begin{split}
        \|\theta^n - \theta^m\|_{L^\infty_{x}} &\leq \left \| G_\alpha(t)(\theta_0^n - \theta_0^m) \right \|_{L^\infty_{x}} + \int_0^t \left\| \nabla G_\alpha(t-s)\cdot ((\theta^n u^n- \theta^m u^m)(s,x))   \right\|_{L^\infty_{x}} \, ds
    \end{split}
    \end{equation*}
and similarly for $u$,
    \begin{equation*}
    \begin{split}
    \|u^n - u^m\|_{L^\infty_{x}} &\leq \left \| G_\alpha(t)(u_0^n - u_0^m) \right\|_{L^\infty_{x}}+ \int_0^t \left\|(K\ast \nabla G_\alpha(t-s))\cdot ((\theta^n u^n- \theta^m u^m)(s,x)) \right\|_{L^\infty_{x}}\, ds.
    \end{split}
    \end{equation*}

Adding the above lines together we  invoke the boundedness of $G_\alpha(t)$,
\begin{equation*}
\begin{split}
   \|\theta^n - \theta^m\|_{L^\infty_{x}}+ &\|u^n - u^m\|_{L^\infty_{x}} 
  \leq \|\theta_0^n - \theta_0^m\|_{L^\infty_{t,x}}+ \|u_0^n - u_0^m\|_{L^\infty_{t,x}}\\
  &+ \int_0^t \|  \nabla G_\alpha(t-s)\cdot ((\theta^n u^n- \theta^m u^m)(s,x)) \|_{L^\infty_x} \,ds \\
  &+ \int_0^t \|  K\ast \nabla G_\alpha(t-s)\cdot ((\theta^n u^n- \theta^m u^m)(s,x)) \|_{L^\infty_x} \,ds.
\end{split}
\end{equation*}
Now apply the kernel estimates of \cref{lem:operator-L1-estimates},
\begin{equation*}
\begin{split}
  \|\theta^n - \theta^m\|_{L^\infty_{x}}+ &\|u^n - u^m\|_{L^\infty_{x}} 
  \leq \|\theta_0^n - \theta_0^m\|_{L^\infty_{t,x}}+ \|u_0^n - u_0^m\|_{L^\infty_{t,x}}\\
  &+ 2C\int_0^t (t-s)^{-\frac{1}{2\alpha}}\| \theta^n u^n- \theta^m u^m\|_{L^\infty_x} ds.
\end{split}
\end{equation*}
Further, by adding and subtracting $\theta^n u^m$, one can derive the bound
\begin{equation}
\begin{split}
    &\|\theta^n - \theta^m\|_{L^\infty_{x}}+ \|u^n - u^m\|_{L^\infty_{x}} 
  \leq \|\theta_0^n - \theta_0^m\|_{L^\infty_{t,x}}+ \|u_0^n - u_0^m\|_{L^\infty_{t,x}}\\
  &\qquad + 2C\left( \|\theta^n\|_{L^\infty_{t,x}} + \|u^m\|_{L^\infty_{t,x}} \right)\int_0^t (t-s)^{-\frac{1}{2\alpha}} \left(   \|\theta^n - \theta^m\|_{L^\infty_{x}}+ \|u^n - u^m\|_{L^\infty_{x}} \right) \, ds.
\end{split}
\end{equation}
Thus, by Volterra-Gr\"onwall inequality as in \cref{lem:volterra-gronwall}, one has
\begin{equation*}
\begin{split}
    \|\theta^n - \theta^m\|_{L^\infty_{x}}+& \|u^n - u^m\|_{L^\infty_{x}} \\
    \leq \mu \left(\|\theta_0^n - \theta_0^m\|_{L^\infty_{t,x}}+ \|u_0^n - u_0^m\|_{L^\infty_{t,x}}\right)&\exp\left(C_\alpha \left(2C \left(\|\theta^n\|_{L^\infty_{t,x}} + \|u^m\|_{L^\infty_{t,x}} \right)^\mu\right) t\right), 
\end{split}
\end{equation*}
where $\mu$ and $C_\alpha$ are given by \eqref{def:mu-and-C-mu}.  
Next, invoking the $C^\gamma$ continuity of the initial data, by an approximation to the identity argument (see Lemma 8.14 of \cite{Folland}) we have, 
    $$\theta^\epsilon_0 \to \theta_0 \text{ and } u^\epsilon_0 \to u_0 \text{ as } \epsilon \to 0.$$ For fixed $T$, one can conclude $\{(\theta^n,u^n)\}_{n=1}^\infty$ forms a Cauchy sequence.
    
    Since the sequence converges uniformly and each element of the sequence satisfies a mild formulation, it is clear that the limit $(\theta,u)$ will also satisfy the mild solution definition. Therefore, for arbitrary $T>0$, there exists a mild solution $(\theta,u)$ to \eqref{e:SQG} on $[0,T]$ with initial data $(\theta_0,u_0)\in (L^\infty(\R^2))^3$, and with $\|\theta\|_{L^\infty_{t,x}} \leq \|\theta_0\|_{L^\infty_x}$.
\end{proof}
Finally, we show \cref{cor:solution-to-sqg}.
\begin{proof}[\textbf{Proof of \cref{cor:solution-to-sqg}}] \label{proof:solution-ssqg}
    The first conclusion holds from \cref{thm:existence-of-solutions}. The second conclusion follows from \cref{thm:regularity-of-theta-and-u}. The global existence of solutions follows from \cref{thm:global-in-time-solution}. The fact that $C^2$ solutions to \eqref{ssqg} satisfy the equation in a classical sense follows from \cref{P:lp-SQGMotivation}. 
\end{proof}

\appendix
\section{}\label{sec:appendix}

We conclude by showing the equivalence of the definitions of $\Lambda$ and $\Lambda_I$ on $C^2_b(\R^2)$.

\begin{lemma}\label{lem:fractional-laplacian-agreeing}
    Suppose that $f\in C^2_b(\R^2)$, then for $\alpha \in (1/2,1),$
    \[
\Lambda^{2\alpha} f \equiv \Lambda^{2\alpha}_I f.
\]
\begin{proof}
    As a consequence of Lemma 3.10 of \cite{kwasnicki2017ten} or Lemma 1 Section $V$ of \cite{stein1970singular}, it suffices to show that $\Lambda^{2\alpha}_I f$ is well defined as $\text{Dom}(\Lambda^{2\alpha}_I; L^\infty(\R^2)
    )\subset \text{Dom}(\Lambda^{2\alpha}; L^\infty(\R^2))$.  To that end, we first break the integral into two pieces,
    \begin{equation*}
    \begin{split}
         \Lambda^{2\alpha}_I f  &= \lim_{r\to 0^+} \int_{r<|h|<1} \underbrace{\frac{f(x+h)-f(x)}{|h|^{2+2\alpha}} dh}_{=:I(x)}  +  \underbrace{\int_{|h|>1} \frac{f(x+h)-f(x)}{|h|^{2+2\alpha}} dh. }_{=:II(x)}
    \end{split}
    \end{equation*}
We first work with $I(x)$ by using the differentiability of $f$. Rewrite $I(x)$ as
\begin{equation*}
\begin{split}
    I(x) = \lim_{r\to 0^+}\frac{1}{2}\int_{r<|h|<1} \frac{f(x+h)-2f(x)+f(x-h)}{|h|^{2+2\alpha}} dh.
\end{split}
\end{equation*}
Since \( f\in C^2_b(\mathbb{R}^2) \), by the Mean Value Theorem there exists some \( \xi \) on the line segment from \( x \) to \( x+h \) such that
\[
f(x+h)-f(x) = \nabla f(x+\xi h)\cdot h.
\]
Similarly, there exists some \( \zeta \) on the segment from \( x-h \) to \( x \) such that
\[
f(x)-f(x-h) = \nabla f(x-\zeta h)\cdot h.
\]
Subtracting the second equality from the first, we obtain
\[
f(x+h)-2f(x)+f(x-h) = \nabla f(x+\xi h)\cdot h - \nabla f(x-\zeta h)\cdot h.
\]
Now, we apply the Mean Value Theorem once more to the difference \(\nabla f(x+\xi h)-\nabla f(x-\zeta h)\). Since \( f\in C^2_b(\mathbb{R}^2) \) implies that \( \nabla f \) is Lipschitz, there exists some point \( \eta \) (lying on the line segment connecting \( x+\xi h \) and \( x-\zeta h \)) such that
\[
\nabla f(x+\xi h)-\nabla f(x-\zeta h) = D^2f(\eta)\bigl[(x+\xi h)-(x-\zeta h) = \bigr] =D^2f(\eta)\bigl[(\xi+\zeta)h \bigr].
\]
Noting that \( \xi,\zeta\in[0,1] \) so that \( \xi+\zeta\le 2 \), we deduce
\[
\left| f(x+h)-2f(x)+f(x-h) \right| \le \|D^2f\|_{L^\infty} \, (\xi+\zeta) \, |h|^2 
\le 2\,\|D^2f\|_{L^\infty}\, |h|^2.
\]
Thus, for every \( x\in\mathbb{R}^2 \) and \( h\neq 0 \) we have
\[
\frac{|f(x+h)-2f(x)+f(x-h)|}{|h|^{2+2\alpha}}
\le 2\,\|D^2f\|_{L^\infty}\,\frac{|h|^2}{|h|^{2+2\alpha}}
= 2\,\|D^2f\|_{L^\infty}\,\frac{1}{|h|^{2\alpha}}.
\]
Thus, since \( \alpha \in (1/2,1) \),
\[
 |I(x)| \leq 2\|D^2f\|_{L^{}\infty}\int_{|h|\le 1} \frac{dh}{|h|^{2\alpha}} <\infty.
\]

Turning to estimate $II$, we have 
\begin{equation*}
\begin{split}
    |II(x)| \leq \int_{|h|>1} \frac{| f(x+h)-f(x)|}{|h|^{2+2\alpha}} &\leq  \int_{|h|>1} \frac{| f(x+h)-f(x)|}{|h|^{2+2\alpha}} \\
    &\leq \|f\|_{L^\infty} \int_{|h|>1} \frac{1}{|h|^{2+2\alpha}} dh <\infty.
\end{split}
\end{equation*}

Hence, the principal-value integral
\[
I(x)=\frac{1}{2}\lim_{r\to 0^+} \int_{|h|>r} \frac{f(x+h)-2f(x)+f(x-h)}{|h|^{2+2\alpha}}\,dh
\]
converges absolutely for each \( x\in\mathbb{R}^2 \), and so the singular integral definition of the fractional Laplacian is well defined for \( f\in C^2_b(\mathbb{R}^2) \).

\end{proof}

\end{lemma}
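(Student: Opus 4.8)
The plan is to reduce the claimed identity to a pointwise well-definedness statement for the hypersingular integral and then to invoke the general equivalence between the Fourier-multiplier and singular-integral definitions of the fractional Laplacian. Concretely, by Lemma 3.10 of \cite{kwasnicki2017ten} (or Lemma 1 of Section V of \cite{stein1970singular}), the two definitions agree on any function for which the principal-value integral defining $\Lambda^{2\alpha}_I f$ converges. Hence it suffices to show the inclusion $C^2_b(\R^2) \subset \operatorname{Dom}(\Lambda^{2\alpha}_I; L^\infty(\R^2))$, i.e., that for each fixed $x \in \R^2$ the limit in \eqref{def:frac-laplacian-sio} exists and is finite. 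All of the analytic work is therefore concentrated in this convergence statement.

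First I would exploit the symmetry of the truncated domain $\{|h| > r\}$ under $h \mapsto -h$. Substituting $h = y - x$ and averaging the integrand against its reflection replaces the first difference $f(x) - f(x+h)$ by the second symmetric difference, giving
\[
    \Lambda^{2\alpha}_I f(x)
        = -\frac{c_\alpha}{2} \lim_{r \to 0^+}
            \int_{|h| > r}
            \frac{f(x+h) - 2f(x) + f(x-h)}{|h|^{2+2\alpha}}
            \, dh.
\]
This symmetrization is the essential maneuver, because in the supercritical range $\alpha > 1/2$ the naive Lipschitz bound $\abs{f(x) - f(x+h)} \le \norm{\grad f}_{L^\infty} \abs{h}$ only controls the original integrand by $\abs{h}^{-(1+2\alpha)}$, which is \emph{not} integrable near the origin in $\R^2$ since $1 + 2\alpha > 2$; the second difference recovers one extra order of cancellation.

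Next I would split the symmetrized integral into a near piece over $\{|h| \le 1\}$ and a far piece over $\{|h| > 1\}$ and bound each absolutely. On the near piece, two applications of the mean value theorem (valid since $f \in C^2_b$, so $\grad f$ is Lipschitz) yield $\abs{f(x+h) - 2f(x) + f(x-h)} \le 2\norm{D^2 f}_{L^\infty} \abs{h}^2$, so the integrand is dominated by $2\norm{D^2 f}_{L^\infty} \abs{h}^{-2\alpha}$; this is integrable over $\{|h| \le 1\}$ because $2\alpha < 2$. On the far piece, the crude bound $\abs{f(x+h) - 2f(x) + f(x-h)} \le 4\norm{f}_{L^\infty}$ makes the integrand $O(\abs{h}^{-2-2\alpha})$, integrable over $\{|h| > 1\}$ since $2 + 2\alpha > 2$. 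Both bounds being uniform in $x$, the principal value exists (indeed converges absolutely) for every $x$, establishing the desired domain inclusion.

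The main obstacle is precisely the behavior near $h = 0$ in the regime $\alpha > 1/2$: without symmetrization the integral genuinely fails to converge absolutely, so the argument must be built around the second-difference structure rather than the raw quotient $\frac{f(x) - f(y)}{|x-y|^{2+2\alpha}}$. Beyond that, the only care needed is to keep the estimates uniform in $x$, so that convergence holds for \emph{every} $x \in \R^2$ and the two operators are thereby identified as equal functions, and to confirm that the hypotheses of the cited equivalence lemma are genuinely met on $C^2_b(\R^2)$.
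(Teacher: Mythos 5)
Your proposal is correct and follows essentially the same route as the paper's proof: the same reduction to the domain inclusion via Lemma 3.10 of \cite{kwasnicki2017ten} (or Stein's Lemma 1, Section V), the same symmetrization to the second difference, the same two-application-of-the-mean-value-theorem bound $\abs{f(x+h)-2f(x)+f(x-h)} \le 2\norm{D^2 f}_{L^\infty}\abs{h}^2$ on the near region, and the same crude $L^\infty$ bound on the far region. The only cosmetic difference is that you symmetrize over all of $\{\abs{h}>r\}$ before splitting, whereas the paper splits first and symmetrizes only the near piece; your explicit remark that the naive Lipschitz bound fails for $\alpha > 1/2$ is a nice justification the paper leaves implicit.
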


\section*{Acknowledgments} DMA is grateful to the National Science Foundation
for support through grant DMS-2307638. EC is grateful to the Simons Foundation
for support through grant 429578.

\bibliographystyle{plain}
\bibliography{Refs}

\end{document}